\allowcomments{\comryan}{RB}{Ryan}{blue}
\newcommand{\rad}{\,\text{rad}}
\newcommand{\thi}{\text{thi}}
\newcommand{\ar}{r}
\renewcommand{\hyp}{\LL}
\newcommand{\arith}{L_{\text{AP}}}
\newcommand{\sep}{E}
\newcommand{\turn}{m}
\renewcommand{\index}{n}
\begin{document}
\title[Quantitative results using variants of Schmidt's game]{Quantitative results using variants of Schmidt's game: Dimension bounds, arithmetic progressions, and more}

\authorryan\authorlior\authordavid

\begin{Abstract}
Schmidt's game is generally used to deduce qualitative information about the Hausdorff dimensions of fractal sets and their intersections. However, one can also ask about quantitative versions of the properties of winning sets. In this paper we show that such quantitative information has applications to various questions including:
\begin{itemize}
\item What is the maximal length of an arithmetic progression on the ``middle $\epsilon$'' Cantor set?
\item What is the smallest $n$ such that there is some element of the ternary Cantor set whose continued fraction partial quotients are all $\leq n$?
\item What is the Hausdorff dimension of the set of $\epsilon$-badly approximable numbers on the Cantor set?
\end{itemize}
We show that a variant of Schmidt's game known as the \emph{potential game} is capable of providing better bounds on the answers to these questions than the classical Schmidt's game. We also use the potential game to provide a new proof of an important lemma in the classical proof of the existence of Hall's Ray.
\end{Abstract}

\maketitle

\tableofcontents

\section{Introduction}

As motivation, we begin by considering three questions which initially appear to be unrelated, but whose answers turn out to have a deep connection.

\begin{question}
\label{q1}
For each $0 < \epsilon < 1$, let $M_\epsilon$ be the \emph{middle-$\epsilon$ Cantor set} obtained by starting with the interval $[0,1]$ and repeatedly deleting from each interval appearing in the construction the middle open interval of relative length $\epsilon$. As $\epsilon \to 0$, the sets $M_\epsilon$ are getting ``larger'' in the sense that their Hausdorff dimensions tend to 1. Do they also get ``larger'' in the sense of containing longer and longer arithmetic progressions as $\epsilon\to 0$? How does the length of the longest arithmetic progression in $M_\epsilon$ behave as $\epsilon \to 0$?
\end{question}

\begin{question}
\label{q3}
What is the Hausdorff dimension of the set of $\epsilon$-badly approximable vectors
\[
\BA_d(\epsilon) \df \{\xx\in\R^d : \forall \pp/q \in \Q^d \;\; |\xx-\pp/q| > \epsilon q^{-\frac{d+1}{d}}\}?
\]
Here $|\cdot|$ denotes a fixed norm on $\R^d$.
\end{question}

\begin{question}
\label{q2}
For each $n\in\N$, let $F_n$ denote the set of irrational numbers in $(0,1)$ whose continued fraction partial quotients are all $\leq n$.\Footnote{The \emph{continued fraction expansion} of an irrational number is the unique expression
\[
a_0 + \cfrac1{a_1+\cfrac1{a_2+\ddots}}
\]
with $a_0\in\Z$, $a_1,a_2,\ldots\in\N$, whose value is equal to that number. The numbers $a_1,a_2,\ldots$ are called the \emph{partial quotients}.} The union of $F_n$ over all $n$ is the set of badly approximable numbers in $(0,1)$, i.e. $(0,1)\cap \bigcup_{\epsilon>0} \BA_1(\epsilon)$, which is known to have full dimension in the ternary Cantor set $C = M_{1/3}$, so in particular we have $F_n \cap C \neq \emptyset$ for all sufficiently large $n$. What is the smallest $n$ for which $F_n\cap C \neq \emptyset$?
\end{question}

What these questions have in common is that they can all be (partially) answered using Schmidt's game, a technique for proving lower bounds on the Hausdorff dimensions of certain sets known as ``winning sets'', as well as on the dimensions of their intersections with other winning sets and with various nice fractals. In particular, the class of winning sets (see e.g. \cite{Schmidt1} for the definition) has the following properties:
\begin{itemize}
\item[(a)] The class of winning sets is invariant under bi-Lipschitz maps and in particular under translations.
\item[(b)] The intersection of finitely many winning sets is winning.\Footnote{If $\alpha > 0$ is fixed, then the intersection of countably many $\alpha$-winning sets is $\alpha$-winning (see \cite{Schmidt1} for the definition of $\alpha$-winning). But if $\alpha$ is not fixed, then it may only be possible to intersect finitely many winning sets.}
\item[(c)] Winning sets in $\R^d$ have full Hausdorff dimension and in particular are nonempty.
\item[(d)] The set of badly approximable numbers is winning, both in $\R$ and on the Cantor set.
\end{itemize}
These properties already hint at why Schmidt's game might be relevant to Questions \ref{q1}--\ref{q2}. Namely, properties (a), (b), and (c) imply that any winning subset of $\R$ contains arbitrarily long arithmetic progressions (since if $S$ is winning, then for any $t,k$ the set $\bigcap_{i=0}^{k-1} (S-it)$ is winning and therefore nonempty), and properties (c) and (d) imply that the set of badly approximable numbers has full Hausdorff dimension both in $\R$ and on the Cantor set. The middle-$\epsilon$ Cantor set $M_\epsilon$ is not winning, but by showing that it is ``approximately winning'' in some quantitative sense, we will end up getting a lower bound on the maximal length of arithmetic progressions it contains, thus addressing Question \ref{q1}. Similarly, the set $\BA_d(\epsilon)$ of $\epsilon$-badly approximable points in $\R^d$ is not winning, but since the union $\bigcup_{\epsilon > 0} \BA_d(\epsilon)$ is the set of all badly approximable points, it is winning and thus the dimension of $\BA_d(\epsilon)$ tends to $d$ as $\epsilon$ tends to $0$. Again, showing that $\BA_d(\epsilon)$ is ``approximately winning'' will yield a lower bound on its Hausdorff dimension, thus addressing Question \ref{q3}. Using a known relationship between $\BA_1(\epsilon)$ and $F_n$, this yields lower bounds on the Hausdorff dimensions of both $F_n$ and $F_n\cap C$. For $n$ for which the second of these lower bounds is positive, we have $F_n\cap C \neq \emptyset$, which addresses Question \ref{q2}.

To make the above paragraph rigorous, we will need to have a clear notion of what it means for a set to be ``approximately winning'' in a quantitative sense. One idea is to use Schmidt's original definition of ``$(\alpha,\beta)$-winning'' sets (see \cite{Schmidt1}) as a quantitative approximation of winning sets. This is particularly natural because the notion of being $(\alpha,\beta)$-winning is the basis of Schmidt's definition of the class of winning sets. However, it turns out that the class of $(\alpha,\beta)$-winning sets is not very nice from a quantitative point of view (see Remark \ref{remarksomuchworse}). Thus, we will instead consider two variants of Schmidt's game, the \emph{absolute} game introduced by McMullen \cite{McMullen_absolute_winning} and the \emph{potential} game introduced in \cite[Appendix C]{FSU4}. The natural notions of ``approximately winning'' for these games turn out to be more suited to proving quantitative results.

\section{Main results}

Before listing our main results, we will state what we expect the answers to Questions \ref{q1}--\ref{q2} to be based on the following heuristic: if $S_1,S_2$ are two fractal subsets of $\R^d$, then if $S_1$ and $S_2$ are ``independent'' we expect that
\begin{equation}
\label{independent1}
\HD(S_1\cap S_2) = \max(0,\HD(S_1) + \HD(S_2) - d),
\end{equation}
or equivalently
\begin{equation}
\label{independent2}
\codim_H(S_1\cap S_2) = \min(d,\codim_H(S_1) + \codim_H(S_2)),
\end{equation}
where $\codim_H(S) = d - \HD(S)$. This is roughly because if we divide $[0,1]^d$ into $N^d$ blocks of size $1/N$, then we should expect that $S_1$ will intersect $N^{\HD(S_1)}$ of these blocks and $S_2$ will intersect $N^{\HD(S_2)}$ of them, so if $S_1$ and $S_2$ are independent then we should expect
\[
\dfrac{N^{\HD(S_1)}N^{\HD(S_2)}}{N^d} = N^{\HD(S_1) + \HD(S_2) - d}
\]
blocks to be intersected by both $S_1$ and $S_2$. We can expect that most such blocks will also intersect $S_1\cap S_2$. Of course, if the exponent is negative then we should expect that $S_1\cap S_1 = \emptyset$ and in particular $\HD(S_1\cap S_2) = 0$.

To use this heuristic to estimate the maximal length of an arithmetic progression on $M_\epsilon$, note that if $\{a,a+t,\ldots,a+(k-1)t\}$ is such an arithmetic progression, then we have
\[
\bigcap_{i = 0}^{k-1} (M_\epsilon - it) \neq \emptyset.
\]
If the sets $M_\epsilon,M_\epsilon-t,\ldots,M_\epsilon-(k-1)t$ are independent, then we expect the Hausdorff dimension of their intersection to be
\[
\max(0,1-k\codim_H(M_\epsilon)),
\]
which is positive if and only if $k < 1/\codim_H(M_\epsilon)$. Since $\codim_H(M_\epsilon) \sim \epsilon$, this means that we expect the maximal length of an arithmetic progression on $M_\epsilon$ to be approximately $1/\epsilon$.\Footnote{There is an additional degree of freedom with respect to $t$ that this heuristic argument does not take into account, but its contribution to the expected maximal length of an arithmetic progression is not very significant.} Similarly, since $\codim_H(F_n) \sim 1/n$, we expect the maximal length of an arithmetic progression on $F_n$ to be approximately $n$. We are able to prove the following bounds rigorously:

\begin{theorem}
\label{theoremarithbounds}
Let $\arith(S)$ denote the maximal length of an arithmetic progression in the set $S$. For all $\epsilon > 0$ sufficiently small and $n\in\N$ sufficiently large, we have
\begin{align}
\label{arithbounds1}
\frac{1/\epsilon}{\log(1/\epsilon)} \lesssim \arith(M_\epsilon) &\leq 1/\epsilon+1\\ \label{arithbounds2}
\frac{n}{\log(n)} \lesssim \arith(F_n) &\lesssim n^2.
\end{align}
\end{theorem}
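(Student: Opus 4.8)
The plan is to prove the four inequalities by two unrelated methods: the two upper bounds by direct analysis of the self-similar structure of $M_\epsilon$ and $F_n$, and the two lower bounds as applications of the potential game. For $\arith(M_\epsilon)\le 1/\epsilon+1$: after $j$ steps of the construction there are $2^j$ surviving intervals of common length $\ell_j=\big(\tfrac{1-\epsilon}{2}\big)^j$, and adjacent survivors of the $j$-th generation are separated by a gap of length at least $\epsilon\ell_{j-1}$. Given an arithmetic progression of length $N\ge2$ and step $t>0$ in $M_\epsilon$: if $t\ge\epsilon$ then $N-1\le(\text{span})/t\le1/\epsilon$; if $t<\epsilon$, choose $j$ maximal with $\epsilon\ell_{j-1}>t$ (so $\ell_j\le t/\epsilon$), observe that every gap between adjacent generation-$j$ intervals then exceeds $t$, hence two consecutive terms cannot lie in different such intervals, so the whole progression lies in one interval of length $\ell_j\le t/\epsilon$ and again $N-1\le1/\epsilon$. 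I expect no difficulty here.

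For $\arith(F_n)\lesssim n^2$ the argument is more delicate, since the large gaps of $F_n$ are all concentrated near one end of each cylinder. Given a progression $\{x_0<\dots<x_{N-1}\}\subseteq F_n$ with step $t$ and $N\ge2$, let $\widehat I=I(a_1,\dots,a_m)$ be the shortest cylinder (all partial quotients $\le n$) containing it; minimality forces the progression to meet at least two children of $\widehat I$. By bounded distortion of the Gauss map, the branch of the $m$-th Gauss iterate carrying $\widehat I$ onto $(0,1)$ has distortion bounded by an absolute constant, carries $F_n\cap\widehat I$ onto $F_n$, and turns the progression into a set whose consecutive gaps are all comparable, to a common value $\tau\asymp t/|\widehat I|$. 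In this rescaled picture $F_n\subseteq\bigcup_{b\le n}I(b)$ with $I(b)=(\tfrac1{b+1},\tfrac1b)$, and the key computation is that $F_n\cap I(b)$ stays at distance $\gtrsim 1/(b^2n)$ from each endpoint of $I(b)$ --- a nearer point of $F_n$ would be forced to have a partial quotient exceeding $n$. Let $I(\beta)$ be the rightmost $I(b)$ met; since a child of larger index is also met, the rescaled progression has span $<1/\beta$, while the first rescaled term inside $I(\beta)$ and its predecessor are consecutive, straddle the left endpoint $1/(\beta+1)$ of $I(\beta)$, and hence lie $\gtrsim 1/(\beta^2n)$ apart, forcing $\tau\gtrsim 1/(\beta^2n)$. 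Dividing, $N-1\asymp(\text{span})/\tau\lesssim\beta n\le n^2$. The endpoint-avoidance estimate, and controlling the distortion constant through the rescaling, are the only points needing care.

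For the lower bounds I would use the potential game via the following template. Quantitative potential winning is translation-invariant, so each translate $S-it$ of a $(\beta,c)$-potential winning set is again $(\beta,c)$-potential winning; potentials add under intersection, so $\bigcap_{i=0}^{k-1}(S-it)$ is $(\beta,kc)$-potential winning; and below a threshold $c_0(\beta)$, being $(\beta,c)$-potential winning forces positive Hausdorff dimension, in particular non-emptiness. Hence if $kc<c_0(\beta)$, then for a suitably small step $t$ the set $\bigcap_{i=0}^{k-1}(S-it)$ is nonempty, and any of its points begins a length-$k$ arithmetic progression in $S$, so $\arith(S)\ge k$. It then remains to supply the two quantitative inputs: that $M_\epsilon$ is $(\beta,c)$-potential winning with $c\lesssim\epsilon\log(1/\epsilon)$, and that $F_n$ is $(\beta,c)$-potential winning with $c\lesssim(\log n)/n$, for a suitable $\beta$; taking $k\asymp c_0(\beta)/c$ then yields $\arith(M_\epsilon)\gtrsim(1/\epsilon)/\log(1/\epsilon)$ and $\arith(F_n)\gtrsim n/\log n$.

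I expect the real work, and the main obstacle, to be exactly these two quantitative potential-winning statements. One must design Alice's strategy --- steering Bob's nested balls into $M_\epsilon$ (off the deleted middle intervals at every scale), respectively into $F_n$ (off the locus where some partial quotient is too large) --- and then bound the total potential she spends between consecutive moves of Bob; the logarithmic loss relative to the heuristically expected exponents $1/\epsilon$ and $n$ enters precisely in this accounting. This is where the general potential-game machinery developed earlier in the paper is meant to do the heavy lifting.
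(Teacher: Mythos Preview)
Your upper-bound arguments are correct and essentially match the paper's. For $M_\epsilon$ the paper phrases it as ``let $I$ be the smallest construction interval containing the progression'' rather than choosing a scale $j$, but the two are equivalent. For $F_n$ the paper likewise passes to the first level where the continued-fraction cylinders diverge and invokes bounded distortion; your organization via the rightmost child $I(\beta)$ and the endpoint-avoidance estimate $\gtrsim 1/(\beta^2 n)$ is a clean variant of the paper's case split on whether the two smallest indices hit are consecutive.

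For the lower bounds your template (translate, intersect, apply a nonemptiness threshold) is exactly Lemma~\ref{AP-general}, but you mislocate the source of the logarithm and hence the ``real work''. The winning strategies for $M_\epsilon$ and $F_n$ are \emph{not} where the difficulty lies: Lemmas~\ref{lemmaMepsilon} and~\ref{lemmaFn} show, almost trivially, that Alice wins by deleting a single interval per turn of relative size $\alpha\asymp\epsilon$ (respectively $\alpha\asymp 1/n$), so these sets are $(\alpha,\beta,0,\rho,\PP)$-potential winning with no logarithmic loss in $\alpha$, and by monotonicity $(\alpha,\beta,c,\rho,\PP)$-winning for every $c>0$. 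The logarithm enters instead through the nonemptiness criterion of Theorem~\ref{theorempotentialHD}: after intersecting $k$ translates the condition reads $k\alpha^c\le K_2^{-1}(1-\beta^{1-c})$, and both sides depend on the free exponent $c$. Taking $c=1-1/\log(\alpha^{-1})$ makes $\alpha^c\asymp\alpha$ while $1-\beta^{1-c}\asymp 1/\log(\alpha^{-1})$, and that optimization is what produces $k\asymp\alpha^{-1}/\log(\alpha^{-1})$. In your framing this means the threshold is not a fixed constant $c_0(\beta)$ but depends on the exponent, which must be tuned to $\alpha$; once you allow that, the ``quantitative potential-winning inputs'' you anticipate needing reduce to $\alpha\asymp\epsilon$ and $\alpha\asymp 1/n$, already supplied by the elementary Lemmas~\ref{lemmaMepsilon} and~\ref{lemmaFn}.
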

Here and hereafter, $A \lesssim B$ means that there exists a constant $K$ (called the \emph{implied constant}) such that $A \leq KB$, and $A \asymp B$ means $A \lesssim B \lesssim A$.

\begin{remark*}
When $1/\epsilon$ is an integer, the lower bound of \eqref{arithbounds1} was first proven by Jon Chaika, see \cite{Chaika}.
\end{remark*}

Theorem \ref{theoremarithbounds} does not give any information about the implied constants of \eqref{arithbounds1} and \eqref{arithbounds2}, so for example it cannot tell us how small $\epsilon$ has to be before we can be sure that $M_\epsilon$ contains an arithmetic progression of length 3 (i.e. a nontrivial arithmetic progression). However, using similar techniques we can show:

\begin{theorem}
\label{theoremM49}
For all $0 < \epsilon \leq 1/49$, we have $\arith(M_\epsilon) \geq 3$, i.e. $M_\epsilon$ contains an arithmetic progression of length $3$. Also, $F_{49}$ contains an arithmetic progression of length $3$ (and thus so does $F_n$ for all $n\geq 49$).
\end{theorem}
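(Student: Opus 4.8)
The plan is to deduce Theorem~\ref{theoremM49} from the same potential-game argument that produces the lower bounds in Theorem~\ref{theoremarithbounds}, specialized to progressions of length $3$ and executed with all constants made explicit. The only input from the general theory is the quantitative nonemptiness criterion for the potential game: for a suitable choice of the shrink rate $\beta$ (and the other parameters of the game) there is an explicit threshold $c_0$ such that any set which is $(\beta,c)$-potential winning on a ball with $c<c_0$ is nonempty. Fix a small $t>0$. I would then exhibit directly a strategy for Alice (the player who removes balls) witnessing that $M_\epsilon\cap(M_\epsilon-t)\cap(M_\epsilon-2t)$ is $(\beta,c')$-potential winning on $[0,1]$ for an explicit constant $c'$: spread across her turns, Alice deletes (a) all of the open intervals removed in the construction of $M_\epsilon$, (b) all of those intervals translated by $-t$, (c) all of them translated by $-2t$, and (d) the single interval $[1-2t,1]$, the last of these merely to confine Bob to $[0,1-2t]$, which is the only part of $[0,1]$ where $M_\epsilon-t$ and $M_\epsilon-2t$ have any points. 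A point surviving such a game lies simultaneously in $M_\epsilon$, $M_\epsilon-t$, and $M_\epsilon-2t$, hence is the start of a nondegenerate $3$-term progression in $M_\epsilon$; so it suffices to bound $c'$ and arrange $c'<c_0$.

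Bounding $c'$ is the real content. By translation-invariance the budget Alice needs for each of (a), (b), (c) is the same quantity $c=c(\epsilon)$, and (d) can be paid off on the first turn at a cost that tends to $0$ with $t$, so $c'\le 3c+o_t(1)$. To estimate $c$, observe that when Alice responds to a ball $B$ of radius $\rho$ only boundedly many of the removed intervals both meet $B$ and have length comparable to $\rho$---namely the ones removed at the construction level $k$ with $\bigl(\tfrac{1-\epsilon}{2}\bigr)^k\asymp\rho$, of which there are $O(1)$ inside $B$---and each has length $\asymp\epsilon\rho$; so the per-turn cost, and hence $c$ itself, is an explicit quantity of order $\epsilon$. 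Feeding this into $3c+o_t(1)<c_0$ and optimizing the free parameters of the game turns the inequality into exactly $\epsilon\le 1/49$ (and then $t$ is taken small enough). For $F_{49}$ one runs the identical argument with the self-similar construction of $M_\epsilon$ replaced by the continued fraction (Gauss map) construction: the set of irrationals having some partial quotient exceeding $n$ is again a countable union of intervals that at each level occupy a proportion of order $1/n$ and are comparably separated, so $F_n$ is $(\beta,c)$-potential winning with the same bound on $c$ but with $1/n$ in place of $\epsilon$, and the same threshold computation shows that $n=49$ works.

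I expect the main obstacle to be sharpness of the constants rather than any conceptual point. The qualitative statements---$M_\epsilon$ and $F_n$ are potential winning---are routine, but reaching the precise value $1/49$ forces one to (i) pin down the exact nonemptiness threshold $c_0$ of the potential game on the line, not merely its existence; (ii) bound Alice's per-turn deletion cost by the best possible multiple of $\epsilon$ (resp.\ $1/n$) rather than a crude one; and (iii) choose $\beta$ (and the remaining game parameters) so as to balance these. A secondary technical point is that the continued fraction cylinders are only conformal, not exactly self-similar, so the bounded-distortion estimates for the Gauss map enter the $F_n$ computation and must be carried through; and one should verify at the end that the point produced by the game really does give three distinct collinear elements, which is automatic since $t>0$.
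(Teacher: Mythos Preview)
Your plan has a genuine gap: you propose to reach the exact threshold $1/49$ via the potential game and its nonemptiness criterion, but in this paper that criterion is Theorem~\ref{theorempotentialHD}, whose constants $K_1,K_2$ are \emph{not} explicit (they absorb several implied constants from Ahlfors regularity and from the combinatorics of the proof). The paper says this outright just before stating Theorem~\ref{theoremM49}: the potential-game argument behind Theorem~\ref{theoremarithbounds} ``does not give any information about the implied constants \ldots\ so for example it cannot tell us how small $\epsilon$ has to be before we can be sure that $M_\epsilon$ contains an arithmetic progression of length $3$.'' Your step (i), ``pin down the exact nonemptiness threshold $c_0$,'' is therefore not a detail to be filled in but the entire difficulty, and the machinery you are invoking does not supply it.

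The paper instead proves Theorem~\ref{theoremM49} with the \emph{absolute} game, where the nonemptiness criterion is completely explicit: Corollary~\ref{corollarynonempty} says that an $(\alpha,\beta,\rho,k)$-absolute winning set meets any interval of length $\ge 2\rho$ as soon as $k\alpha+(k+1)\beta\le 1$. There is also a second idea you are missing. Rather than intersecting three translates $M_\epsilon\cap(M_\epsilon-t)\cap(M_\epsilon-2t)$ (which would force $k=3$), the paper fixes $a\in M_\epsilon$ and intersects only \emph{two} sets, $S_1=M_\epsilon\cup(-\infty,0)\cup(1,\infty)$ and its dilate $S_2=2S_1-a$; a point $t\in S_1\cap S_2\cap[0,1]\setminus\{a\}$ then yields the progression $\{a,(a+t)/2,t\}\subset M_\epsilon$. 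This keeps $k=2$, and with $\beta=1/6$ Lemma~\ref{lemmaMepsilon} gives $\alpha=\tfrac{2\epsilon}{1-\epsilon}\beta^{-1}=1/4$ at $\epsilon=1/49$, so the inequality $2\alpha+3\beta\le 1$ holds with equality. Your three-translate scheme, even transplanted to the absolute game, would require $3\alpha+4\beta\le 1$ and would not reach $1/49$. The $F_{49}$ case is handled identically via Lemma~\ref{lemmaFn}.
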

\begin{remark*}
It was pointed out to us by Pablo Shmerkin that one can get a better result using Newhouse's gap lemma \cite[p.107]{Newhouse2}, namely that $\arith(M_\epsilon) \geq 4$ for all $0 < \epsilon \leq 1/3$.\Footnote{Namely, Newhouse's gap lemma implies that there exists $t\in (M_\epsilon-\tfrac12)\cap \tfrac13(M_\epsilon-\tfrac12)$, and then $\big\{\tfrac12-3t,\tfrac12-t,\tfrac12+t,\tfrac12+3t\big\}$ is an arithmetic progression in $M_\epsilon$ of length $4$.} Note that for all $\epsilon > 1/3$, we have $\arith(M_\epsilon) = 2$ (the proof is similar to the proof of the upper bound of \eqref{arithbounds1}).
\end{remark*}

Question \ref{q2} can also be addressed via the independence assumption \eqref{independent1}. Namely, we have $\HD(F_2) \sim 0.531$ \cite[Theorem 10]{Good} and $\HD(C) = \frac{\log(2)}{\log(3)} \sim 0.631$, so we should expect
\[
\HD(F_2\cap C) \sim 0.531+0.631-1=0.162 > 0,
\]
and in particular $F_2\cap C \neq \emptyset$. This guess appears to be confirmed by computer estimates, which give $\HD(F_2\cap C) \sim 0.14$.\Footnote{We estimated the dimension of $F_2\cap C$ by searching for a disjoint cover of $F_2$ by intervals of the form $I_\omega = [[0;\omega,1],[0;\omega,3]]$ or $I_\omega = [[0;\omega,3],[0;\omega,1]]$ (see \eqref{cfracdef} for the notation), where $\omega$ is a finite word in the alphabet $\{1,2\}$, such that either
\begin{itemize}
\item[(A)] $I_\omega\cap C \neq \emptyset$ and $|I_\omega| < \epsilon \leq |I_{\omega'}|$, where $\omega'$ is the word resulting from deleting the last letter of $\omega$; or
\item[(B)] $I_\omega\cap C = \emptyset$.
\end{itemize}
Here $\epsilon > 0$ is a free parameter determining the accuracy of the computation. We then used the heuristic estimate
\[
\HD(F_2\cap C) \sim \frac{\log(N_\epsilon)}{-\log(\epsilon)},
\]
where $N_\epsilon$ is the calculated number of intervals of type (A). The right-hand side varies with respect to $\epsilon$ but remains within the range $[0.13,0.15]$ for $\epsilon\in [10^{-18},10^{-8}]$.} It appears quite ambitious to prove such a statement, but we can prove the following weaker one:

\begin{theorem}
\label{theoremCF19}
For each $n\in\N$, let $F_n$ denote the set of irrational numbers in $(0,1)$ whose continued fraction partial quotients are all $\leq n$, and let $C$ denote the ternary Cantor set. Then $F_{19}\cap C \neq \emptyset$.
\end{theorem}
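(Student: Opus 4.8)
The plan is to derive Theorem~\ref{theoremCF19} from a quantitative lower bound on $\HD\big(\BA_1(\epsilon)\cap C\big)$ obtained by playing the potential game on the Cantor set, combined with the elementary fact that $\BA_1(1/20)$ contains no rationals and that every irrational $x=[0;a_1,a_2,\dots]\in(0,1)\cap\BA_1(1/20)$ already lies in $F_{19}$. For the latter, recall that if $p_k/q_k$ are the convergents of $x$ then
\[
q_k^2\,|x-p_k/q_k| = q_k\|q_k x\| = \frac{1}{a_{k+1}+q_{k-1}/q_k+\beta_{k+1}} < \frac{1}{a_{k+1}},
\qquad \beta_{k+1}=[0;a_{k+2},a_{k+3},\dots]\in(0,1),
\]
and that the convergents realize the best rational approximations of $x$; since $x\in\BA_1(1/20)$ means $q^2|x-p/q|>1/20$ for all $p/q$ (in particular for $p/q=0/1$, which bounds $a_1$), every partial quotient of $x$ is $<20$, i.e. $\le 19$. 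Thus it suffices to prove $\BA_1(1/20)\cap C\neq\emptyset$, and for this it is enough to produce an explicit $\Phi(\epsilon)\to 0$ with $\HD\big(\BA_1(\epsilon)\cap C\big)\ge \tfrac{\log 2}{\log 3}-\Phi(\epsilon)$ and to check that $\Phi(1/20)<\tfrac{\log 2}{\log 3}$.

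For the dimension bound I would show that $\BA_1(\epsilon)$ is $c(\epsilon)$-potential winning for the potential game played on $C$: the game in which Bob is restricted to choosing a nested sequence of intervals of the Cantor construction (so the scale decreases by the fixed ratio $1/3$ and there are exactly two admissible children at each step), while on each turn Alice deletes a countable family of neighborhoods of total $s$-potential at most $c(\epsilon)$, with $s=\tfrac{\log 2}{\log 3}$. The key estimate is that, given Bob's current Cantor interval $I$, the rationals $p/q$ that can still violate the defining inequality of $\BA_1(\epsilon)$ inside $I$ have denominator $q$ in a window around $\sqrt{\epsilon/|I|}$, that there are only $O(\epsilon)$ of them in a neighborhood of $I$, and that many of those lie in gaps of $C$ and so cost Alice nothing; deleting their $\epsilon q^{-2}$-neighborhoods, summed over the geometric run of scales, therefore uses total potential $c(\epsilon)\lesssim\epsilon$ (up to a logarithmic factor in $1/\epsilon$). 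Since for small $\epsilon$ this budget is well below $3^{-s}$, Alice's deletions can never swallow both children of any interval, so she can always move, and the general quantitative relationship between $c$-potential winning and Hausdorff dimension on Ahlfors-regular fractals then gives $\Phi(\epsilon)\asymp c(\epsilon)$, with implied constants that can be made explicit.

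The step I expect to be the main obstacle is precisely this constant-tracking: to bring the threshold down to $n=19$ one must optimize the width of the ``dangerous'' denominator window treated at each scale, carefully subtract off the rationals falling harmlessly into gaps of $C$, and tune the constant in the potential-to-dimension conversion, since a crude version of the argument only yields $F_n\cap C\neq\emptyset$ for some substantially larger $n$. A secondary subtlety is that the reduction above is only available for $\epsilon$ as large as $1/20$, so the potential-game estimate must remain strong enough at that value; this is plausible ($1/20$ is still comfortably small) but is exactly where the constant $19$—as opposed to, say, $25$ or $30$—is won or lost. If the optimized $\Phi$ turns out to fall just short at $\epsilon=1/20$, the fallback is to avoid the lossy $\BA_1(\epsilon)\leftrightarrow F_n$ passage altogether and play the game directly on the intersection of the Cantor construction tree with the tree of continued-fraction cylinders using only partial quotients $\le 19$, constructing a point of $F_{19}\cap C$ by hand.
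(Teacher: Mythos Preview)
Your approach is genuinely different from the paper's, and your own caveat about constant-tracking is exactly the point where it falls short of the target $n=19$.

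The paper does not go through the potential game or any Hausdorff-dimension estimate at all. It uses the \emph{absolute} game directly. Lemma~\ref{lemmaFn} (which is Lemma~\ref{lemmaBA} composed with the inclusion $[0,1]\cap\BA_1(\tfrac1{n+1})\subset F_n$) says that $(-\infty,0)\cup F_n\cup(1,\infty)$ is $(\alpha,\beta,\rho)$-absolute winning with $\alpha=\tfrac{2}{n-1}\beta^{-1}$ and $\rho=\beta/2$. Setting $\beta=1/3$ forces $\alpha=\tfrac{6}{n-1}$, and the ternary Cantor construction is exactly compatible with the game when $\alpha=\beta=1/3$: at every stage Bob's interval has two Cantor children of relative length $1/3$ separated by a gap of relative length $1/3$, so Alice's single deleted interval of relative length $\le\alpha=1/3$ can kill at most one child and Bob always has a legal Cantor move. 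The equation $\tfrac{6}{n-1}=\tfrac13$ gives $n=19$ on the nose. That is the whole proof: three lines, no dimension bounds, no constants to optimize.

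Your primary plan routes through Theorem~\ref{theorempotentialHD}, whose constants $K_1,K_2$ are obtained by crude measure-theoretic counting and are nowhere near sharp; tracing them through for $J=C$ and $\epsilon=1/20$ would give $F_n\cap C\neq\emptyset$ only for a much larger $n$. The refinements you propose (discarding rationals that land in gaps of $C$, optimizing the denominator window) are sensible but would require an essentially new, ad hoc argument rather than an application of the general machinery. Your ``fallback'' in the last sentence---playing directly on the Cantor tree and avoiding one bad interval per level---\emph{is} the paper's proof, once one realizes that the absolute game with $\alpha=\beta=1/3$ is the clean formalism for it and that Lemma~\ref{lemmaBA} already shows Alice needs to delete only one interval per scale.
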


Yann Bugeaud pointed out to us that his Folding Lemma \cite[Theorem D.3]{Bugeaud4} can be used to prove the stronger result that $F_9\cap C \neq \emptyset$.\Footnote{The proof is as follows. For the notation see \eqref{cfracdef}. Call a rational $p/q$ \emph{good} if:
\begin{itemize}
\item $q$ is a power of $3$, and
\item $p/q = [0;1,1,a_3...,a_h]$ with $h \geq 4$, $a_h \geq 2$, $h$ odd,  and $a_i \leq 3$ for all $i = 3,\ldots,h$.
\end{itemize}
By direct calculation, the rational $17/27 = [0;1,1,1,2,3]$ is good. Moreover, by the Folding Lemma \cite[Theorem D.3]{Bugeaud4}, if $p/q$ is good then so is $f(p/q) := p/q - 1/3q^2$. Thus $f^n(17/27)$ is good for all $n$ and thus $x := \lim_{n\to\infty} f^n(17/27)$ is in $F_3 \subset \BA(\frac15)$ (cf. \cite[Theorems 6 and 9]{Khinchin_book} for the subset relation). Let $y=2-2x = \sum_{k\geq 2} 2/3^{2^k-1} \in C$. Since $x\in \BA_1(\frac15)$, we have $y \in \BA_1(\frac1{10}) \subset F_9$. So $y\in F_9\cap C$.}

Question \ref{q3} is different from our other two questions in that a fairly precise answer is already known: we have
\begin{equation}
\label{kd}
\codim_H(\BA_d(\epsilon)) \sim k_d \epsilon^d,
\end{equation}
where $k_d$ is an explicit constant of proportionality and $A\sim B$ means that $A/B\to 1$ as $\epsilon \to 0$ \cite{Simmons5}\Footnote{Note that $\kappa$ in the notation of \cite{Simmons5}, and $c$ in the notation of \cite{BroderickKleinbock}, are both equal to $\epsilon^d$ in our notation (and $c^n$ in the notation of \cite{Weil1}).} (see also \cite{Kurzweil,Hensley, BroderickKleinbock, Weil1}). However, from a historical perspective the first proof that $\HD(\BA_d(\epsilon)) \to d$ as $\epsilon \to 0$ is Schmidt's proof using his eponymous game \cite{Schmidt2}, so it is interesting to ask what the best bound is that can be proven using Schmidt's game or its variants. Kleinbock and the first-named author used a variant of Schmidt's game (specifically the hyperplane absolute game) to prove that
\[
\codim_H(\BA_d(\epsilon)) \lesssim \frac{\epsilon^{1/2}}{\log(1/\epsilon)},
\]
see \cite[Theorem 1.2]{BroderickKleinbock}. We are able to improve their result by proving the following using the hyperplane potential game instead of the hyperplane absolute game:

\begin{theorem}
\label{theoremBAdcodim}
For all $\epsilon > 0$ we have
\[
\codim_H(\BA_d(\epsilon)) \lesssim \epsilon.
\]
\end{theorem}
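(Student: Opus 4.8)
The plan is to realize $\BA_d(\epsilon)$ as a winning set of the hyperplane potential game of \cite[Appendix C]{FSU4} with parameters from which the bound $\codim_H \lesssim \epsilon$ can be read off. Two facts must be established. The first is a quantitative \emph{dimension lemma} for the hyperplane potential game: if $S \subseteq \R^d$ is winning for the $(\beta,\gamma)$-hyperplane potential game then $\codim_H(S) \lesssim_{d,\gamma} \beta$. This strengthens the known fact that hyperplane potential winning sets have full Hausdorff dimension; the key feature is the \emph{linear} dependence on $\beta$, with no logarithmic loss, and this is precisely what removes the factor $\log(1/\epsilon)^{-1}$ from the bound $\codim_H(\BA_d(\epsilon)) \lesssim \epsilon^{1/2}/\log(1/\epsilon)$ of \cite{BroderickKleinbock}. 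Morally it holds because in the potential game Alice may use arbitrarily thin deleted slabs, so each of her moves forfeits only an $O(\beta)$ proportion of Bob's current ball rather than the positive proportion the absolute game can force. The second fact is that Alice has a winning strategy for the $(\beta,\gamma)$-hyperplane potential game with target $\BA_d(\epsilon)$ for some fixed $\gamma$ (say $\gamma = 1$) and $\beta \asymp \epsilon$. Granting both, the theorem follows, as one may let Bob start inside a ball disjoint from the finitely many ``large'' balls $\Delta(\pp/q)$ of small denominator, and deleting finitely many balls does not change the codimension.

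For Alice's strategy, write $\R^d \setminus \BA_d(\epsilon) = \bigcup_{\pp/q} \Delta(\pp/q)$, where $\Delta(\pp/q)$ is the ball of radius $\epsilon q^{-(d+1)/d}$ about $\pp/q$; Alice must guide Bob's nested balls to avoid every $\Delta(\pp/q)$. She organizes her play by the denominator, attending to rationals of a given denominator size at the turn when Bob's ball radius $\rho$ has shrunk to about $\beta^{-1}$ times the radius of the corresponding $\Delta(\pp/q)$, so that these obstacles occupy a $\beta$-fraction of his ball. At such a turn the relevant rationals lying in Bob's ball, having bounded denominator and being confined to a small ball, are covered by a controlled number of affine hyperplanes --- the geometric input, a counting/pigeonhole estimate in the spirit of the Simplex Lemma of Davenport--Schmidt used in the proof that $\BA_d$ is hyperplane absolute winning. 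Alice deletes a neighborhood of each such hyperplane of thickness comparable to the radius of a single $\Delta(\pp/q)$; here the potential game is essential, since she may delete a whole countable family of such thin slabs in one move and pay only through the summed $\gamma$-potential. Since $\sum (\epsilon q^{-(d+1)/d})^\gamma$ telescopes against a geometric series in $\rho$ (using $\gamma(d+1)/d > 1$, true for $\gamma = 1$ and every $d$), her total per-turn potential is $\asymp_{d,\gamma} (\epsilon\rho)^\gamma$, a legal move as soon as $\beta$ exceeds a constant multiple of $\epsilon$. A point surviving all of Alice's deletions avoids every $\Delta(\pp/q)$ and so lies in $\BA_d(\epsilon)$.

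I expect the main obstacle to be the bookkeeping that makes the per-turn potential genuinely $\asymp (\epsilon\rho)^\gamma$. One must assign each band of denominators to a turn at which, simultaneously, the relevant rationals inside Bob's ball are covered by few enough hyperplanes, Bob's ball is still large enough relative to the $\Delta(\pp/q)$'s in play that the thin deleted slabs are effective, and no denominator is ``skipped'' when Bob contracts his ball rapidly between turns --- and to arrange all of this with $\beta \asymp \epsilon$. This is also where the improvement from $\epsilon^{1/2}$ to $\epsilon$ over \cite{BroderickKleinbock} is won: the absolute game, allowed only one deleted slab per turn, must take that slab thick enough to engulf an entire cluster of obstacles and so loses a factor, whereas the potential game deletes each obstacle's thin slab individually and pays only through the convergent $\gamma$-sum. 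The remaining ingredient, used as a black box above, is the linear-in-$\beta$ dimension lemma for the hyperplane potential game.
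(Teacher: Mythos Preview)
Your two-ingredient plan---a dimension lemma giving $\codim_H(S)\lesssim\alpha$ for hyperplane potential winning sets, together with a winning strategy for $\BA_d(\epsilon)$ with parameter $\alpha\asymp\epsilon$---is exactly the paper's architecture (Theorem~\ref{theorempotentialHD} with $J=\R^d$, $\eta=1$, combined with Lemma~\ref{lemmaBAd}). But you have misdiagnosed \emph{where} the gain over the hyperplane absolute game lives, and as a result your strategy sketch is more complicated than it needs to be and partly off-target.

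In the paper Alice plays the $c=0$ game: she deletes exactly \emph{one} hyperplane neighborhood per turn. The Simplex Lemma produces a \emph{single} affine hyperplane containing every rational of the relevant denominator range inside Bob's ball---not ``a controlled number'' and certainly not a countable family. There is no telescoping sum of $(\epsilon q^{-(d+1)/d})^\gamma$, and the bookkeeping you flag as the main obstacle essentially evaporates: one range of denominators per turn, one hyperplane, one slab of thickness $\alpha\rho_m$ with $\alpha\asymp\epsilon$. This $c=0$ winning property is then pushed up to $c=\eta/2$ by monotonicity (Proposition~\ref{propositionmonotonicity2}) before being fed into the dimension lemma, so your specific choice $\gamma=1$ would in any case sit exactly at the forbidden boundary $c=\eta$ of Theorem~\ref{theorempotentialHD}.

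The improvement over \cite{BroderickKleinbock} therefore comes entirely from the dimension lemma, not from Alice's move set. In both the absolute and the $c=0$ potential game Alice deletes one slab of relative thickness $\asymp\epsilon$; the crucial difference (Remark~\ref{remarkc0vsabsolute}) is that in the potential game Bob need not avoid that slab \emph{immediately}---the outcome merely has to miss it eventually. This delayed avoidance is what lets the proof of Theorem~\ref{theorempotentialHD} track Alice's deletions through an accumulated potential $\phi_j$ rather than a per-turn disjointness constraint, and that accounting is what yields the linear bound $\codim_H\lesssim\alpha^\eta$. Your moral ``each move forfeits only an $O(\beta)$ proportion'' is pointed in the right direction, but the mechanism you attach to it---Alice summing countably many thin slabs---is not what does the work.
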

Note that this upper bound is the correct order of magnitude when $d = 1$ but not for larger $d$.

We can also ask about the intersection of $\BA_d(\epsilon)$ with a fractal set. Recall that a compact set $J \subset \R^d$ is called \emph{Ahlfors regular of dimension $\delta$} if there exists a measure $\mu$ with topological support equal to $J$ such that for all $\xx\in J$ and $0 < \rho \leq 1$, we have
\[
C^{-1} \rho^\delta \leq \mu(B(\xx,\rho)) \leq C\rho^\delta
\] 
where $C$ is an absolute constant. It was proven in \cite{Fishman} that if $J \subset \R$ is any Ahlfors regular set, then the union $\BA_1 \df \bigcup_{\epsilon > 0} \BA_1(\epsilon)$ has full dimension in $J$. We can prove the following quantitative version of this result:

\begin{theorem}
\label{theoremKepsilondelta}
Let $J \subset \R$ be an Ahlfors regular set of dimension $\delta > 0$. Then for all $\epsilon > 0$, we have
\begin{align*}
\HD(M_\epsilon\cap J) &\geq \delta - K \epsilon^\delta\\
\HD(\BA_1(\epsilon)\cap J) &\geq \delta - K \epsilon^\delta
\end{align*}
where $K$ is a constant depending on $J$.
\end{theorem}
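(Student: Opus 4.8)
The plan is to derive both estimates from a single fact — that $M_\epsilon$ and $\BA_1(\epsilon)$ are potential winning with constant $\asymp\epsilon^\delta$ — together with a transfer principle saying that a set winning the potential game sits inside an Ahlfors regular set in large dimension. Fix the measure $\mu$ witnessing that $J$ is $\delta$-Ahlfors regular, and consider the potential game played along $J$: Bob picks nested balls $B_0\supseteq B_1\supseteq\cdots$ centred on $J$ with $\rho(B_{n+1})=\beta\rho(B_n)$ for a fixed $\beta$, while at each stage Alice deletes a countable family $\mathcal A_n$ of balls with $\sum_{A\in\mathcal A_n}\rho(A)^\delta\le c\,\rho(B_n)^\delta$, and Bob must keep the $B_n$ disjoint from Alice's deletions. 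The transfer principle I want is: if Alice can force $\bigcap_n B_n\in S$, then $\HD(S\cap J)\ge\delta-Kc$, where $K$ depends only on $\delta$, $\beta$, and the Ahlfors constant of $J$. This should follow from the usual mass-distribution argument: Bob chooses each $B_{n+1}$ from a separated family of $\asymp\beta^{-\delta}$ admissible sub-balls of $B_n\cap J$ (such a family exists by $\delta$-regularity), and since Alice's deletions have total $\mu$-mass $\lesssim c\,\mu(B_n\cap J)$, a proportion $\ge 1-Kc$ of Bob's options survive; the Cantor set of survivors then carries a measure of Frostman exponent $\ge\delta-Kc$ supported on $S\cap J$.

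It then remains to exhibit, for $S=M_\epsilon$ and for $S=\BA_1(\epsilon)$, Alice strategies achieving $c\asymp\epsilon^\delta$ (with $\beta$ a fixed constant, say $\beta=\tfrac12$); the theorem follows. For $M_\epsilon$: when $\rho(B_n)$ reaches a scale comparable to a construction interval of $M_\epsilon$, Alice deletes the boundedly many ``middle gaps'' of that generation that meet $B_n$, each an interval of relative length $\asymp\epsilon$ and hence coverable by one ball of radius $\asymp\epsilon\,\rho(B_n)$. Avoiding all middle gaps at all scales forces $\bigcap_n B_n\in M_\epsilon$, and the stage-$n$ deletions have $\delta$-content $\lesssim\epsilon^\delta\rho(B_n)^\delta$, so $c\asymp\epsilon^\delta$. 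In the special case $\delta=1$ (so $J$ is an interval) this reproduces the estimate $\HD(M_\epsilon)\ge 1-K\epsilon$ underlying \eqref{arithbounds1}, and similarly for $\BA_1(\epsilon)$ it reproduces the $d=1$ case of Theorem \ref{theoremBAdcodim}.

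For $\BA_1(\epsilon)$, whose complement is $\bigcup_{p/q}B(p/q,\epsilon q^{-2})$: Alice deletes $B(p/q,\epsilon q^{-2})$ at the stage $n(q)$ where $\rho(B_n)$ first drops below $q^{-2}$, provided $p/q\in 2B_{n(q)}$ (if not, $B(p/q,\epsilon q^{-2})$ can never contain $\bigcap_m B_m$, so no deletion is needed — and conversely if $\bigcap_m B_m$ lies within $\epsilon q^{-2}$ of $p/q$ then $p/q\in 2B_{n(q)}$, so Alice does delete it). The relevant denominators at stage $n$ satisfy $q^{-2}\asymp\rho(B_n)$, and the number of rationals with such a denominator lying in an interval of length $\asymp\rho(B_n)$ is $O(1)$ by the separation bound $|p/q-p'/q'|\ge(qq')^{-1}$; so Alice again deletes $O(1)$ balls of radius $\epsilon q^{-2}\asymp\epsilon\,\rho(B_n)$ per stage, with $\delta$-content $\lesssim\epsilon^\delta\rho(B_n)^\delta$. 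Avoiding all of them forces $\bigcap_n B_n\in\BA_1(\epsilon)$, so once more $c\asymp\epsilon^\delta$.

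The main obstacle is the transfer principle on a general Ahlfors regular set rather than on $\R$: on the line, deleting a ball of relative radius $\epsilon$ is automatically harmless, but on an arbitrary $\delta$-regular $J$ one must verify that such a deletion — small though it is in $\mu$-mass — cannot be positioned so as to destroy too many of Bob's available sub-balls, which is exactly why Bob is made to work with a separated family at the scale of his next move, and why $K$ must be allowed to depend on $J$. A subsidiary point is that the rational count for $\BA_1(\epsilon)$ must be $O(1)$ uniformly in the position of $B_n$, not merely on average — which is precisely what the $(qq')^{-1}$ separation of Farey fractions delivers. Everything else is routine bookkeeping, given the potential-game framework already developed.
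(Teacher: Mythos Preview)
Your strategies for $M_\epsilon$ and $\BA_1(\epsilon)$ are correct and match the paper's Lemmas \ref{lemmaMepsilon} and \ref{lemmaBA}: in each case Alice deletes $O(1)$ balls of radius $\asymp\epsilon\,\rho(B_n)$ per turn, so the game parameter is $\alpha\asymp\epsilon$.  The gap is in your transfer principle.  The inference ``Alice's deletions have $\mu$-mass $\lesssim c\,\mu(B_n)$, hence a proportion $\ge 1-Kc$ of Bob's sub-balls survive'' is false.  A sub-ball is killed as soon as a deletion \emph{intersects} it, not only when it is covered; with $\beta$ fixed and the sub-balls $\beta\rho(B_n)$-separated, a single interval of radius $\epsilon\rho(B_n)$ hits $O(1)$ sub-balls no matter how small $\epsilon$ is.  Thus each step kills $O(1)$ out of $N\asymp\beta^{-\delta}$ children, and the resulting Cantor set has dimension only
\[
\frac{\log(N-O(1))}{\log(1/\beta)}\;\ge\;\delta-O(\beta^\delta),
\]
a bound that does not improve as $\epsilon\to 0$.  (Trying instead $\beta\asymp\epsilon$ forces $\alpha\asymp\epsilon\beta^{-1}\asymp 1$, so the deletion is as large as $B_n$ and kills everything.)

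What is missing is a multi-scale or ``block'' argument: one must pass from level $jN$ to level $(j+1)N$ with $N\asymp\alpha^{-\delta}\asymp\epsilon^{-\delta}$, and show that over the whole block only a fixed constant fraction of the $\asymp\beta^{-N\delta}$ descendants are lost.  Then
\[
\HD\;\ge\;\frac{\log\bigl((1-\text{const})\beta^{-N\delta}\bigr)}{N\log(1/\beta)}\;=\;\delta-\frac{O(1)}{N\log(1/\beta)}\;=\;\delta-O(\epsilon^\delta).
\]
This is exactly what the paper does in Theorem \ref{theorempotentialHD}, and the bookkeeping is nontrivial: one must separately control ``new'' obstacles (deletions made within the current block, at many different scales) and ``old'' obstacles (deletions from earlier blocks that have not yet been avoided, tracked by a potential function $\phi_j$).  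Your sketch addresses neither of these; the separation argument you invoke in your last paragraph handles only the per-turn count, which as shown above is too crude by a factor of $\epsilon^{-\delta}$.
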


Note that if $\BA_1(\epsilon)$ and $J$ are independent in the sense of \eqref{independent1}, then we have
\[
\HD(\BA_1(\epsilon)\cap J) = \delta - \codim_H(\BA_1(\epsilon)) = \delta - k_1 \epsilon + o(\epsilon),
\]
where $k_1$ is as in \eqref{kd}. This means that Theorem \ref{theoremKepsilondelta} can be considered close to optimal when $\delta = 1$ but not for smaller $\delta$.

We also prove a higher-dimensional analogue of Theorem \ref{theoremKepsilondelta}, where the set $J$ needs to satisfy an additional condition known as \emph{absolute decay}; see Sections \ref{sectionpotentialHD}-\ref{sectionpotentialapplications} for details.

Quantitative versions of variants of Schmidt's game also have applications to the proof of the existence of Hall's Ray. We recall that Hall's Ray is a set of the form $\CO{t_0}\infty$ such that for all $t\in \CO{t_0}\infty$, there exists $x\in\R$ such that
\[
\limsup_{q\to\infty} \frac{1}{q^2 \|qx\|} = t,
\]
where $\|\cdot\|$ denotes distance to the nearest integer. The proof of the existence of Hall's Ray proceeds via a lemma stating that $F_4 + F_4 = [\sqrt 2 - 1,4\sqrt 2 - 4]$, see e.g. \cite[Chapter 4]{CusickFlahive}. The proof of this lemma is ordinarily via a ``hands-on'' argument, but we are able to prove the following weaker version of the lemma in a more conceptual way using the absolute game:

\begin{theorem}
\label{theoremF49}
$F_{49}+F_{49} \supset [1/6,11/6]$.
\end{theorem}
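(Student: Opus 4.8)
The plan is to convert the sumset statement into an intersection statement and settle it with McMullen's absolute game. For any $t\in\R$ one has $t\in F_{49}+F_{49}$ exactly when $F_{49}\cap(t-F_{49})\neq\emptyset$, so it is enough to exhibit, for each $t\in[1/6,11/6]$, one point lying in both $F_{49}$ and $t-F_{49}$. The first move is to trade $F_{49}$ for a set of badly approximable numbers. Since the best rational approximations of an irrational $x=[0;a_1,a_2,\dots]$ are its convergents and $q_{k-1}\|q_{k-1}x\|=\bigl(a_k+[0;a_{k+1},a_{k+2},\dots]+[0;a_{k-1},\dots,a_1]\bigr)^{-1}$, the condition $\inf_q q\|qx\|>\epsilon$ forces every partial quotient to be strictly below $1/\epsilon$; hence $\BA_1(\epsilon)\cap(0,1)\subseteq F_{49}$ for $\epsilon=1/50$. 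So it suffices to produce a point in $\BA_1(\epsilon)\cap\bigl(t-\BA_1(\epsilon)\bigr)\cap I_t$, where $\epsilon:=1/50$ and $I_t:=(0,1)\cap(t-1,t)$, since then that point $x$ satisfies $x,\ t-x\in\BA_1(\epsilon)\cap(0,1)\subseteq F_{49}$ and $x+(t-x)=t$.

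The engine is the quantitative absolute-winning property of $\BA_1(\epsilon)$. Fix $\beta$ up to $1/3$. The forbidden set of $\BA_1(\epsilon)$ is the union of the intervals $\bigl(\tfrac pq-\tfrac\epsilon{q^2},\tfrac pq+\tfrac\epsilon{q^2}\bigr)$, and Alice wins the $\beta$-absolute game by deleting, at a ball of radius $\rho$, a neighbourhood of the rational of denominator $\asymp\rho^{-1/2}$ that threatens the ball -- a deletion of radius $\asymp\epsilon\rho\le\beta\rho$, within budget since $\epsilon<\beta$. The only thing that prevents $\BA_1(\epsilon)$ from being genuinely absolute winning is the finitely many ``large'' forbidden intervals at each scale -- those of radius $\asymp\epsilon$, coming from small denominators (chiefly $q=1$) -- which Alice cannot delete if Bob's first ball lies too deep inside one. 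The set $t-\BA_1(\epsilon)$ is an isometric image of $\BA_1(\epsilon)$, with forbidden intervals centred at $\{t-p/q\}$ of the same radii, so the same strategy works for it; and Alice wins the game with target $\BA_1(\epsilon)\cap(t-\BA_1(\epsilon))$ by interleaving her deletions for the two targets, which stays within budget because $\epsilon$ is small compared with a fixed power of $\beta$. Thus Alice wins the intersection game from any first ball $B_0$ that keeps a definite distance from the (finitely many) large forbidden intervals of both sets.

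It remains to fit such a $B_0$ inside $I_t$, and this is exactly where the range of $t$ enters: $|I_t|=1-|t-1|\ge1/6$ for $t\in[1/6,11/6]$. The only forbidden intervals of either set that are of size comparable to $\epsilon$ rather than $\ll\epsilon$, and that meet $I_t$, sit in two chunks of length $\le\epsilon$ abutting the two endpoints of $I_t$ (for $\BA_1(\epsilon)$ the chunk comes from the integer nearest $I_t$, for $t-\BA_1(\epsilon)$ from its reflection at the other end; note that no integer lies in the interior of $I_t$). Deleting these two chunks leaves a sub-interval of $I_t$ of length $\ge1/6-2\epsilon$, inside which -- dodging, if necessary, the one or two mid-sized interior forbidden intervals, which arise only when $|I_t|$ is correspondingly large -- one centres a ball $B_0$ compactly contained in $I_t$ whose radius is a bounded multiple of $\epsilon$; the inequality to check is just that this bounded multiple of $\epsilon$ is at most $\tfrac12(1/6-2\epsilon)$, which holds for $\epsilon=1/50$. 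Running the intersection game from $B_0$ produces the point we want.

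The argument is structurally routine; the real work, and the source of the constants $49$ and $[1/6,11/6]$, is the compatibility of three numerical requirements. First, $\BA_1(\epsilon)\cap(0,1)\subseteq F_{49}$ forces $\epsilon\ge1/50$, so $\epsilon$ cannot be taken small. Second, Alice's deletions -- including the slippage from interleaving two targets -- stay within budget at every scale only if $\epsilon$ lies below a fixed power of $\beta\le1/3$. Third, an admissible first ball for the intersection game, which must avoid the two endpoint chunks of width $\asymp\epsilon$, fits inside $I_t$ only when $|I_t|$ exceeds a fixed multiple of $\epsilon$ -- that is, only when $t$ is bounded away from $0$ and $2$, the interval $[1/6,11/6]$ being a safe choice. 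I expect the third point to be the binding one and the main obstacle: one is squeezing a ball whose size is forced to be $\gtrsim\epsilon$ into intervals that can be as short as $1/6$, with $\epsilon$ pinned at $1/50$ by the first requirement. The sharper thickness/gap-lemma approach avoids these losses and recovers the classical interval, at the cost of the clean game-theoretic argument.
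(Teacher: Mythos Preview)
Your overall plan matches the paper's: reduce $t\in F_{49}+F_{49}$ to $F_{49}\cap(t-F_{49})\cap I_t\neq\emptyset$, invoke absolute-winning of $\BA_1(1/50)$ via the inclusion $\BA_1(1/50)\cap(0,1)\subset F_{49}$, pass to the reflected set by invariance, intersect, and place a first ball in $I_t$. But what you have written is an outline, not a proof, and the places you wave through are exactly where the constants $49$ and $[1/6,11/6]$ are decided.

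Two concrete gaps. First, your ``interleaving'' of the two strategies is ambiguous, and the phrase ``$\epsilon$ is small compared with a fixed power of $\beta$'' suggests the classical Schmidt splicing (alternate turns between the two games). That approach degrades the parameters and would not yield the stated constants; the paper instead uses the variant of the absolute game in which Alice may delete $k$ balls per turn, so that intersecting two $(\alpha,\beta,\rho)$-winning sets gives an $(\alpha,\beta,\rho,2)$-winning set with \emph{unchanged} $\alpha,\beta$. With the precise choice $\beta=1/6$, Lemma~\ref{lemmaFn} gives $\alpha=\tfrac{2}{48}\cdot 6=\tfrac14$, and the nonemptiness criterion $k\alpha+(k+1)\beta\le 1$ with $k=2$ reads $2\cdot\tfrac14+3\cdot\tfrac16=1$ on the nose. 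You never produce such a check, and without it the statement is not established for $n=49$.

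Second, your treatment of the ``large forbidden intervals'' near the endpoints of $I_t$ is ad hoc: you carve off two chunks of size $\epsilon$, then speak of ``dodging, if necessary, the one or two mid-sized interior forbidden intervals,'' and finally ask that some unspecified bounded multiple of $\epsilon$ be at most $\tfrac12(1/6-2\epsilon)$. None of this is pinned down. The paper avoids the issue entirely by taking the winning set to be $S_1=(-\infty,0)\cup F_{49}\cup(1,\infty)$ (this is what Lemma~\ref{lemmaFn} actually delivers), so that there are no exceptional large intervals to dodge; one then intersects at the end with $I=[\max(0,t-1),\min(1,t)]$, whose length is $\ge 1/6=2\rho$ precisely when $t\in[1/6,11/6]$, and observes $S_1\cap(t-S_1)\cap I\subset F_{49}\cap(t-F_{49})$. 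This augmentation trick is what makes the endpoint bookkeeping disappear and the numerology close exactly.
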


This weaker version is still sufficient to prove the existence of Hall's Ray, although it would yield a worse bound for $t_0$ than a proof using the original lemma.\\

\textbf{Acknowledgements.} We thank Vitaly Bergelson for suggesting the Question \ref{q1} to us, and Yann Bugeaud for suggesting Question \ref{q2}, as well as for pointing out the relevance of his Folding Lemma to this question. We thank Jon Chaika for helpful discussions including providing us with an early draft of \cite{Chaika}. We thank Pablo Shmerkin for his comments regarding the Newhouse gap lemma. The second-named author was supported in part by the Simons Foundation grant \#245708. The third-named author was supported by the EPSRC Programme Grant EP/J018260/1. The authors thank an anonymous referee for helpful comments.

\section{The absolute game and its applications}
\label{sectionabsolute}

We will prove most of our results using two different variations of Schmidt's game: the absolute game and the potential game. In this section we define the absolute game and use it to prove Theorems \ref{theoremM49}, \ref{theoremCF19}, and \ref{theoremF49}. The version of the absolute game that we give below is slightly different from the standard definition as found in e.g. \cite{McMullen_absolute_winning}. Specifically, the following things are different:
\begin{itemize}
\item We introduce a parameter $\rho$ limiting Bob's initial move by preventing him from playing too small a ball at first.
\item We use two separate parameters $\alpha,\beta$ to limit Alice and Bob's moves during the game, rather than a single parameter $\beta$ as in the classical definition of the absolute game.
\item We introduce a parameter $k$ allowing Alice to delete a fixed finite number of balls rather than a single ball.
\item We use the convention that if a player has no legal moves, he loses. Note that this convention means that the implication ``winning implies nonempty'' will only be true for certain sets of parameters.
\end{itemize}
The first three changes are for the purpose of recording more precise quantitative information about winning sets, while the last change allows for more elegant general statements (cf. Remark \ref{remarkjustifyconvention}), as well as making the absolute game more similar to the potential game that we define in Section \ref{sectionpotential}.

\begin{definition}
\label{Absolute}
Let $X$ be a complete metric space. Given $\alpha,\beta,\rho > 0$ and $k\in\N$, Alice and Bob play the \emph{$(\alpha,\beta,\rho,k)$-absolute game} as follows:
\begin{itemize}
\item The turn order is alternating, with Bob playing first. Thus, Alice's $\turn$th turn occurs after Bob's $\turn$th turn and before Bob's $(\turn+1)$st turn. It is thought of as a response to Bob's $\turn$th move.
\item On the $\turn$th turn, Bob plays a closed ball $B_\turn = B(x_\turn,\rho_\turn)$, and Alice responds by choosing at most $k$ different closed balls $A_\turn^{(i)}$, each of radius $\leq\alpha\rho_\turn$. She is thought of as ``deleting'' these balls.
\item On the first (0th) turn, Bob's ball $B_0 = B(x_0,\rho_0)$ is required to satisfy
\begin{equation}
\label{bobrulesabs1}
\rho_0 \geq \rho,
\end{equation}
while on subsequent turns his ball $B_{\turn + 1} = B(x_{\turn+1},\rho_{\turn+1})$ is required to satisfy
\begin{equation}
\label{bobrulesabs}
\rho_{\turn + 1}\geq \beta \rho_\turn \text{ and } B_{\turn + 1} \subset B_\turn \butnot \bigcup_i A_\turn^{(i)}.
\end{equation}
If Bob cannot choose a ball consistent with these rules, then he loses automatically.
\end{itemize}
After infinitely many turns have passed, Bob has chosen an infinite descending sequence of balls
\[
B_0 \supset B_1 \supset \cdots
\]
If the radii of these balls do not tend to zero, then Alice is said to win by default. Otherwise, the balls intersect at a unique point $x_\infty\in X$, which is called the \emph{outcome} of the game.

Now fix a set $S\subset X$, called the \emph{target set}. Suppose that Alice has a strategy guaranteeing that if she does not win by default, then $x_\infty \in S$. Then the set $S$  is called \emph{$(\alpha,\beta,\rho,k)$-absolute winning}. A set is called \emph{$(\alpha,\beta,\rho)$-absolute winning} if it is $(\alpha,\beta,\rho,1)$-absolute winning. If a set is $(\alpha,\beta,\rho)$-absolute winning for all $\alpha,\beta,\rho > 0$, then it is called \emph{absolute winning}.
\end{definition}

Note that the last part of this definition, defining the term ``absolute winning'' with no parameters, defines a concept identical to the standard concept of absolute winning, even though the intermediate concepts are different.

We warn the reader that despite the terminology, an $(\alpha,\beta,\rho,k)$-absolute winning set is not necessarily absolute winning. To prevent confusion, we will sometimes call an absolute winning set \emph{strongly} absolute winning, and a set which is $(\alpha,\beta,\rho,k)$-absolute winning for a suitable choice of $\alpha,\beta,\rho,k$ \emph{weakly} or \emph{approximately} absolute winning.

Finally, note that if $\alpha \geq 1$, then Alice can win in one turn by simply choosing $A_m^{(1)} = B_m$, regardless of the target set.

\subsection{Properties of the absolute game}
The most basic properties of the absolute game are the finite intersection property, monotonicity with respect to the parameters, and invariance under similarities.

\begin{proposition}[Finite intersection property]
\label{propositionintersection1}
Let $J$ be a finite index set, and for each $j\in J$ let $S_j$ be $(\alpha,\beta,\rho,k_j)$-absolute winning. Let $k = \sum_{j\in J} k_j$. Then $S = \bigcap_{j\in J} S_j$ is $(\alpha,\beta,\rho,k)$-absolute winning.
\end{proposition}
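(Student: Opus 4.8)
The plan is to have Alice play all the strategies simultaneously, interleaving their ``deletion'' lists into a single move of size at most $k$. Fix, for each $j \in J$, a strategy $\sigma_j$ witnessing that $S_j$ is $(\alpha,\beta,\rho,k_j)$-absolute winning. Alice will play the intersection game against Bob by maintaining, in parallel, one imagined copy of the $(\alpha,\beta,\rho,k_j)$-absolute game for each $j$, in which Bob's moves are exactly the balls $B_0 \supset B_1 \supset \cdots$ actually played, and Alice's moves are dictated by $\sigma_j$.

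The key point is that this is legal. On turn $\turn$, Bob plays $B_\turn = B(x_\turn,\rho_\turn)$ (satisfying $\rho_0 \geq \rho$ on turn $0$ and $\rho_{\turn+1} \geq \beta\rho_\turn$, $B_{\turn+1}\subset B_\turn\setminus\bigcup_i A_\turn^{(i)}$ thereafter). Feeding $B_\turn$ as Bob's $\turn$th move into each imagined game $j$ is consistent with Bob's rules there, since those rules constrain only the radii ratios and containment among Bob's balls, which are identical across all copies; and the fact that $B_{\turn+1}$ avoids Alice's actual deletions (which include all of $\sigma_j$'s deletions) makes it a fortiori legal in copy $j$. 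Strategy $\sigma_j$ then outputs at most $k_j$ balls of radius $\leq \alpha\rho_\turn$; Alice's actual move on turn $\turn$ is the union of these lists over all $j \in J$, a collection of at most $\sum_{j\in J} k_j = k$ balls, each of radius $\leq \alpha\rho_\turn$, so it is a legal move in the $(\alpha,\beta,\rho,k)$-absolute game. I would also note that if Bob has no legal move at some stage of the real game, then by our convention he loses and there is nothing to prove, and if Bob survives forever then the sequence $B_0 \supset B_1 \supset \cdots$ is a legal run of each imagined game $j$ as well.

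For the conclusion: suppose Alice does not win by default in the real game, i.e. the radii $\rho_\turn \to 0$ and the outcome $x_\infty = \bigcap_\turn B_\turn$ exists. Then in each imagined game $j$ the radii also tend to zero (same balls), so Alice does not win by default there either, and since $\sigma_j$ is a winning strategy for $S_j$ we get $x_\infty \in S_j$. As this holds for every $j \in J$, we conclude $x_\infty \in \bigcap_{j\in J} S_j = S$, which is exactly what is required for $S$ to be $(\alpha,\beta,\rho,k)$-absolute winning.

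I do not anticipate a genuine obstacle here; the only thing to be careful about is bookkeeping — confirming that the interleaved deletion list stays within the size bound $k$ and the radius bound $\alpha\rho_\turn$ on every turn, and that Bob's legality is preserved when his real moves are transplanted into the imagined games. The ``no legal move means loss'' convention also needs to be handled, but it only helps Alice: a run in which Bob gets stuck is a win for Alice regardless of the target set.
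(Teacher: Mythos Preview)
Your proof is correct and follows exactly the same approach as the paper: Alice plays all the individual winning strategies simultaneously, and the formula $k = \sum_{j\in J} k_j$ ensures the combined deletion list is legal. Your write-up simply spells out in detail the bookkeeping that the paper's two-sentence proof leaves implicit.
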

\begin{proof}
Alice can play all of the strategies corresponding to the sets $S_j$ simultaneously by deleting all of the balls she is supposed to delete in each of the individual games. The formula $k = \sum_{j\in J} k_j$ guarantees that this strategy will be legal to play.
\end{proof}

\begin{proposition}[Monotonicity]
\label{propositionmonotonicity1}
If $S$ is $(\alpha,\beta,\rho,k)$-absolute winning and $\alpha\leq \w\alpha$, $\beta \leq \w\beta$, $\rho \leq \w\rho$, and $k\leq \w k$, then $S$ is also $(\w\alpha,\w\beta,\w\rho,\w k)$-absolute winning.
\end{proposition}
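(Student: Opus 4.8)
The plan is to show that any winning strategy for Alice in the $(\alpha,\beta,\rho,k)$-absolute game for $S$ can be reused essentially verbatim in the $(\w\alpha,\w\beta,\w\rho,\w k)$-absolute game. The crucial observation is a two-way legality check. First, every legal move for Bob in the $(\w\alpha,\w\beta,\w\rho,\w k)$-game is also a legal move for Bob in the $(\alpha,\beta,\rho,k)$-game: his initial ball satisfies $\rho_0 \geq \w\rho \geq \rho$, so \eqref{bobrulesabs1} holds with parameter $\rho$; on later turns $\rho_{\turn+1} \geq \w\beta\rho_\turn \geq \beta\rho_\turn$, so the first half of \eqref{bobrulesabs} holds with parameter $\beta$; and the containment requirement $B_{\turn+1}\subset B_\turn\butnot\bigcup_i A_\turn^{(i)}$ is literally the same in both games. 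Second, every legal move for Alice in the $(\alpha,\beta,\rho,k)$-game is legal in the $(\w\alpha,\w\beta,\w\rho,\w k)$-game, since she deletes at most $k\leq \w k$ balls, each of radius $\leq \alpha\rho_\turn \leq \w\alpha\rho_\turn$.

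Concretely, I would have Alice, while playing the $(\w\alpha,\w\beta,\w\rho,\w k)$-game, maintain a ``shadow'' run of the $(\alpha,\beta,\rho,k)$-game in which Bob is imagined to make exactly the moves he makes in the real game (legal there, by the first observation) and Alice responds according to her fixed strategy witnessing that $S$ is $(\alpha,\beta,\rho,k)$-absolute winning. She then plays those same deletions in the real game (legal there, by the second observation). The two runs produce the same descending sequence of balls $B_0 \supset B_1 \supset \cdots$, hence the same outcome $x_\infty$ whenever the radii tend to zero, and the condition for Alice to win by default—that the radii do \emph{not} tend to zero—is identical in the two games. Therefore Alice's strategy guarantees that if she does not win by default then $x_\infty\in S$, which is exactly the assertion that $S$ is $(\w\alpha,\w\beta,\w\rho,\w k)$-absolute winning.

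I do not expect any real obstacle here; the content is entirely in the two-way legality check, which is immediate from the hypotheses $\alpha\leq\w\alpha$, $\beta\leq\w\beta$, $\rho\leq\w\rho$, $k\leq\w k$. The only point worth a remark is that the ``a player with no legal move loses'' convention causes no difficulty: a position in the new game from which Bob has no legal continuation simply hands Alice the win, and such a position can still be fed to the shadow strategy (where Bob may or may not have a legal move) without affecting the conclusion, since in the new game the game has already ended in Alice's favor.
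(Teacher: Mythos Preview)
Your proof is correct and follows the same approach as the paper: the paper's proof is the one-line observation that passing to the tilded parameters enlarges Alice's set of legal moves while shrinking Bob's, and your argument is a careful unpacking of exactly that observation.
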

\begin{proof}
Switching from the $(\alpha,\beta,\rho,k)$-game to the $(\w\alpha,\w\beta,\w\rho,\w k)$-game increases Alice's set of legal moves while decreasing Bob's, making it easier for Alice to win.
\end{proof}

\begin{proposition}[Invariance under similarities]
\label{propositioninvariance1}
Let $f:X\to Y$ be a bijection that satisfies
\[
\dist(f(x),f(y)) = \lambda \dist(x,y) \all x,y\in X.
\]
Then a set $S \subset X$ is $(\alpha,\beta,\rho,k)$-absolute winning if and only if the set $f(S) \subset Y$ is $(\alpha,\beta,\lambda\rho,k)$-absolute winning.
\end{proposition}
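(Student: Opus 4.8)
The plan is to use the fact that a similarity of ratio $\lambda$ carries closed balls to closed balls, multiplying every radius by $\lambda$ while preserving inclusions, intersections, and set differences. Since $\alpha$ and $\beta$ constrain only \emph{ratios} of radii and $k$ is a cardinality, none of these three parameters is affected by such a rescaling; the only parameter that transforms is the absolute threshold $\rho$ on the size of Bob's opening ball, which becomes $\lambda\rho$. Concretely, I would first record the elementary facts that $f(B(x,r)) = B(f(x),\lambda r)$ for all $x\in X$ and $r > 0$ (using that $f$ is a bijection), that $f$ is a homeomorphism, that $A\subset B \iff f(A)\subset f(B)$ and $f(A\butnot B) = f(A)\butnot f(B)$ for $A,B\subset X$, and that a nested sequence of balls in $X$ has radii tending to $0$ if and only if its $f$-image does, in which case $f$ sends the intersection point of the former to that of the latter. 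It suffices to prove a single implication: if $S$ is $(\alpha,\beta,\rho,k)$-absolute winning then $f(S)$ is $(\alpha,\beta,\lambda\rho,k)$-absolute winning, since the reverse implication follows by applying this to the similarity $f^{-1}:Y\to X$ of ratio $1/\lambda$ together with the set $f(S)$.

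Let $\sigma$ be a winning strategy for Alice in the $(\alpha,\beta,\rho,k)$-game on $X$ with target $S$. I would define a strategy $\sigma'$ for Alice in the $(\alpha,\beta,\lambda\rho,k)$-game on $Y$ with target $f(S)$ by conjugation: given a position of the $Y$-game in which Bob has played $B_0\supset\cdots\supset B_m$, feed the preimage position $f^{-1}(B_0)\supset\cdots\supset f^{-1}(B_m)$ (together with the $f^{-1}$-images of Alice's earlier moves) into $\sigma$ to obtain balls $A_m^{(i)}\subset X$, and have $\sigma'$ respond with the balls $f(A_m^{(i)})$. The key point is that this conjugation is a bijection between legal plays of the two games. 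Indeed, the opening constraint $\rho_0\geq\lambda\rho$ in the $Y$-game is equivalent to $\rho_0/\lambda\geq\rho$ for $f^{-1}(B_0)$; the constraints $\rho_{m+1}\geq\beta\rho_m$ and ``the radius of $A_m^{(i)}$ is $\leq\alpha\rho_m$'' are scale-free, hence unchanged under $f$ and $f^{-1}$; and $B_{m+1}\subset B_m\butnot\bigcup_i f(A_m^{(i)})$ holds if and only if $f^{-1}(B_{m+1})\subset f^{-1}(B_m)\butnot\bigcup_i A_m^{(i)}$, by the facts recorded above. In particular Bob has a legal move in the $Y$-game exactly when he has one in the conjugate $X$-game, so neither player loses spuriously for lack of a legal move.

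It remains to read off the outcome. In any play of the $(\alpha,\beta,\lambda\rho,k)$-game on $Y$ in which Alice follows $\sigma'$, the conjugate play in $X$ is a play in which Alice follows $\sigma$. If Bob's radii in $Y$ fail to tend to $0$, then the same holds for the conjugate play, so Alice wins by default in both; otherwise the two outcomes $y_\infty\in Y$ and $x_\infty\in X$ satisfy $y_\infty = f(x_\infty)$, and since $\sigma$ guarantees $x_\infty\in S$ whenever Alice does not win by default, we conclude $y_\infty\in f(S)$. Hence $\sigma'$ witnesses that $f(S)$ is $(\alpha,\beta,\lambda\rho,k)$-absolute winning. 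I do not expect any genuine obstacle: the whole content is the bookkeeping of which parameters are scale-invariant ($\alpha,\beta,k$) versus which rescales ($\rho$), together with a careful transfer of the ``win by default'' clause, so the only thing to be careful about is keeping the two games' conventions aligned under the conjugation.
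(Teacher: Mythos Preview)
Your proof is correct and follows the same approach as the paper: the paper's proof simply observes that the $(\alpha,\beta,\lambda\rho,k)$-game on $Y$ with target $f(S)$ is a disguised version of the $(\alpha,\beta,\rho,k)$-game on $X$ with target $S$, via the correspondence $B_m,A_m \leftrightarrow f(B_m),f(A_m)$. You have spelled out in detail exactly the bookkeeping that the paper leaves implicit.
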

\begin{proof}
The $(\alpha,\beta,\lambda\rho,k)$-game on $Y$ with target set $f(S)$ is a disguised version of the $(\alpha,\beta,\rho,k)$-game on $X$ with target set $S$, with the moves $B_\turn,A_\turn$ in the latter game corresponding to the moves $f(B_\turn),f(A_\turn)$ in the former game.
\end{proof}

\begin{remark}
\label{remarkjustifyconvention}
In the proof of Proposition \ref{propositionintersection1} we crucially used the fact that Bob is considered to lose if he cannot play. This is because although the formula $k = \sum_{j\in J} k_j$ proves that the strategy described will be legal for Alice to play, it does not show that Bob necessarily has any legal responses to it. A similar comment applies to the proof of Proposition \ref{propositionmonotonicity1}. The question of what circumstances guarantee that Bob has legal responses will be dealt with in Lemma \ref{lemmanonempty} below.
\end{remark}

\begin{remark}
\label{remarksomuchworse}
The proof of Proposition \ref{propositionintersection1}, which proceeds by combining different strategies used on the same turn of two different games, is very different from the proof of the countable intersection property of the classical Schmidt's game (see \cite[Theorem 2]{Schmidt1}), which proceeds by splicing different turns of different games together to get a sequence of turns in a new game with different parameters. This difference is in fact the key advantage of the absolute game over the classical Schmidt's game for quantitative purposes. By modifying the argument given in \cite[Theorem 2]{Schmidt1} one can use the classical Schmidt's game to prove results such as $\arith(M_\epsilon) \gtrsim \log(1/\epsilon)$ and $\arith(F_n) \gtrsim \log(n)$, but these bounds are so much worse than the ones appearing in Theorem \ref{theoremarithbounds} that we do not include their proof.
\end{remark}

A key fact about (strongly) absolute winning sets is that in a sufficiently nice (e.g. Ahlfors regular) space, they have full Hausdorff dimension and in particular are nonempty. (This follows from the corresponding property for classical winning sets, see e.g. \cite[Theorem 3.1]{Fishman}.) The following lemma is a quantitative version of this property:

\begin{lemma}
\label{lemmanonempty}
Let $S \subset X$ be $(\alpha,\beta,\rho,k)$-absolute winning, and suppose that every ball $B\subset X$ contains $N > k$ disjoint subballs of radius $\beta\rad(B)$ separated by distances of at least $2\alpha\rad(B)$. Let $B_0$ be a ball of radius $\geq \rho$. Then $S\cap B_0 \neq \emptyset$, and
\begin{equation}
\label{HDbound}
\HD(S\cap B_0) \geq \frac{\log(N-k)}{-\log(\beta)}\cdot
\end{equation}
\end{lemma}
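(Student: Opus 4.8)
The plan is to construct a Cantor-like subset of $S \cap B_0$ by having Bob follow a rigid strategy against Alice's winning strategy, branching at every turn, and then bound the Hausdorff dimension of the resulting set from below using a standard mass-distribution argument.

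First I would set up the tree of Bob moves. Starting from any ball $B_0$ of radius $\rho_0 \geq \rho$, Alice's winning strategy prescribes at most $k$ balls $A_0^{(1)},\ldots,A_0^{(k)}$ of radius $\leq \alpha\rho_0$ to delete. By hypothesis $B_0$ contains $N$ disjoint subballs of radius $\beta\rho_0$, pairwise separated by distances $\geq 2\alpha\rho_0$. Since each deleted ball $A_0^{(i)}$ has radius $\leq \alpha\rho_0$ and the subballs are separated by $\geq 2\alpha\rho_0$, each $A_0^{(i)}$ can intersect at most one of these $N$ subballs; hence at least $N-k$ of them are disjoint from $\bigcup_i A_0^{(i)}$ and are therefore legal moves for Bob. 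Bob plays each such legal ball in a separate branch of the game tree. Iterating, at depth $\turn$ we obtain at least $(N-k)^\turn$ balls, each of radius $\beta^\turn \rho_0$, with the balls at each node pairwise disjoint and separated by a definite amount relative to their radius. The key point is that since in each branch Bob is playing against Alice's winning strategy and the radii $\beta^\turn\rho_0 \to 0$, the nested sequence of balls along any infinite branch converges to a point of $S$ (Alice does not win by default), and that point lies in $B_0$. Let $\mathcal{C}$ denote the set of all such outcome points.

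Next I would bound $\HD(\mathcal{C})$ from below. Define a probability measure $\mu$ on $\mathcal{C}$ by distributing mass uniformly along the tree: each of the $\geq N-k$ children of a node receives an equal share of the parent's mass, so a node at depth $\turn$ carries mass $\leq (N-k)^{-\turn}$. To estimate $\mu(B(x,\rho))$ for $x \in \mathcal{C}$ and small $\rho > 0$, choose $\turn$ so that $\beta^{\turn+1}\rho_0 < \rho \leq \beta^\turn\rho_0$ (up to adjusting for the separation constant). Because the tree balls at depth $\turn$ are pairwise separated by distances comparable to $\beta^\turn\rho_0 \gtrsim \rho$, the ball $B(x,\rho)$ meets only a bounded number (in fact $O(1)$, depending on the separation-to-radius ratio) of depth-$\turn$ tree balls, so $\mu(B(x,\rho)) \lesssim (N-k)^{-\turn} \asymp \rho^{s}$ where $s = \frac{\log(N-k)}{-\log\beta}$. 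By the mass distribution principle, $\HD(\mathcal{C}) \geq s$, which gives \eqref{HDbound} since $\mathcal{C} \subseteq S \cap B_0$; in particular $S \cap B_0 \neq \emptyset$.

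The main obstacle is the bookkeeping in the final step: one must be careful that the separation hypothesis ``distances of at least $2\alpha\rad(B)$'' genuinely forces $B(x,\rho)$ to intersect a bounded number of depth-$\turn$ tree balls, and that the constant in ``$\mu(B(x,\rho)) \lesssim \rho^s$'' does not grow with $\turn$. This requires tracking how the separation constant propagates down the tree (it scales by $\beta$ at each level, in step with the radii, so the ratio of separation to radius stays fixed) and possibly shrinking the tree balls slightly at each stage so that a ball of radius $\rho \leq \beta^\turn\rho_0$ centered at a point of $\mathcal{C}$ cannot reach a sibling subtree. None of this affects the exponent $s$, only the implied constant, which is harmless for the dimension bound. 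A secondary point to check is that the convention ``a player with no legal move loses'' is not triggered for Bob along the branches we construct — but this is exactly what the counting argument $N - k \geq 1$ guarantees at every node.
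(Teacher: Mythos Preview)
Your proposal is correct and follows essentially the same approach as the paper: build a tree of legal Bob moves by noting that each of Alice's $k$ deleted balls can meet at most one of the $N$ separated subballs, leaving at least $N-k$ legal children at every node, and then read off the dimension of the resulting Cantor set. The only difference is cosmetic: the paper cites a standard reference (Beardon) for the fact that a Cantor set with branching $\geq N-k$ and contraction ratio $\beta$ has Hausdorff dimension at least $\log(N-k)/(-\log\beta)$, whereas you sketch the mass distribution argument directly.
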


For our purposes, the important part of this lemma is the assertion that $S\cap B_0 \neq \emptyset$, but we include the bound \eqref{HDbound} because it can be proven easily. Note that the formula $S\cap B_0 \neq \emptyset$ follows from \eqref{HDbound} when $N > k+1$ but not when $N=k+1$.
\begin{proof}
For each ball $B\subset X$, let $f_1(B),\ldots,f_N(B)$ denote the disjoint subballs of radius $\beta\rad(B)$ guaranteed by the assumption. We will consider strategies for Bob that begin by playing $B_0$ and continue playing using the functions $f_1,\ldots,f_N$; that is, on the turn after playing a ball $B$ Bob will play one of the balls $f_1(B),\ldots,f_N(B)$. Some of these strategies are ruled out by the rules of the absolute game, but the separation hypothesis guarantees that at most $k$ of them are ruled out, and thus at least $N-k$ are left. In particular, since $N > k$ Bob always has at least one legal play, so it is possible for him to play the entire game legally starting with the move $B_0$. The outcome of the corresponding game is a member of $S\cap B_0$, and in particular this set is nonempty.

To demonstrate \eqref{HDbound}, we observe that since Bob actually had at least $N-k$ legal moves at each stage, the resulting Cantor set consisting of all possible outcomes of games where Bob plays according to the strategies described above is produced by a branching construction in which each ball of radius $r$ has at least $N-k$ children of radius $\beta r$. It is well-known (see e.g. \cite{Beardon4}) that the Hausdorff dimension of a Cantor set constructed in this way is at least $\frac{\log(N-k)}{-\log(\beta)}$. This completes the proof.
\end{proof}

\begin{corollary}
\label{corollarynonempty}
Let $S \subset \R$ be $(\alpha,\beta,\rho,k)$-absolute winning, and suppose that $k\alpha + (k+1)\beta \leq 1$. Let $I_0$ be an interval of length $\geq 2\rho$. Then $S\cap I_0 \neq \emptyset$.
\end{corollary}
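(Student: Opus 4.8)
The plan is to derive this as an immediate consequence of Lemma~\ref{lemmanonempty}, once I check that for $X = \R$ the geometric hypothesis of that lemma follows from the arithmetic condition $k\alpha + (k+1)\beta \le 1$. Concretely, I would take $N = k+1$ (which indeed satisfies $N > k$) and verify the following: every ball $B \subset \R$ — i.e.\ every closed interval, whose radius I write as $r = \rad(B)$ — contains $k+1$ disjoint subintervals of radius $\beta r$ that are pairwise separated by distances at least $2\alpha r$.

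This is a one-line packing estimate. Placing $k+1$ consecutive intervals, each of length $2\beta r$, with $k$ interspersed gaps, each of length exactly $2\alpha r$, uses total length
\[
(k+1)\cdot 2\beta r + k\cdot 2\alpha r = 2r\big((k+1)\beta + k\alpha\big) \le 2r = |B|,
\]
so such a configuration fits inside $B$. In it, consecutive subintervals are separated by exactly $2\alpha r$ and non-consecutive ones by strictly more, so the separation requirement holds for every pair, and disjointness is automatic since $\alpha > 0$. Thus the hypothesis of Lemma~\ref{lemmanonempty} is met with this value of $N$.

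It then only remains to invoke the lemma. Since $I_0$ has length $\ge 2\rho$, it contains a closed ball $B_0$ of radius $\rho$, which satisfies the requirement $\rad(B_0) \ge \rho$; as $S$ is $(\alpha,\beta,\rho,k)$-absolute winning, Lemma~\ref{lemmanonempty} yields $S \cap B_0 \ne \emptyset$, hence $S \cap I_0 \ne \emptyset$. (The dimension bound~\eqref{HDbound} is vacuous here, since $N - k = 1$ makes its right-hand side $0$; this is exactly why the corollary claims only nonemptiness rather than a dimension estimate.) I do not anticipate any genuine obstacle: the only point requiring care is the bookkeeping in the packing step — confirming that no superfluous gap is needed at the two ends of $B$, and that the boundary case $k\alpha + (k+1)\beta = 1$ still produces a legal, if tight, configuration — and both are routine.
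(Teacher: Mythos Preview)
Your proposal is correct and follows essentially the same route as the paper: verify the separation hypothesis of Lemma~\ref{lemmanonempty} with $N = k+1$ via the packing estimate $(k+1)\cdot 2\beta r + k\cdot 2\alpha r \le 2r$, then invoke the lemma. The only cosmetic difference is that the paper fixes the subintervals to span $I$ exactly and solves for the resulting gap size, whereas you fix the gap size at $2\alpha r$ and check the total length fits; also, you may simply take $B_0 = I_0$ itself (it is a ball of radius $\ge \rho$) rather than passing to a subball.
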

\begin{proof}
Every interval $I$ in $\R$ can be subdivided into $(k+1)$ equally spaced intervals of length $\beta|I|$, such that the left endpoint of the leftmost interval is equal to the left endpoint of $I$, and the right endpoint of the rightmost interval is equal to the right endpoint of $I$. There are $k$ gaps between these intervals, so the common gap size $\epsilon$ satisfies $k\epsilon + (k+1)\beta|I| = |I|$. By assumption $k\alpha + (k+1)\beta \leq 1$, so we have $\alpha|I| \leq \epsilon$, i.e. the gap size is at least $\alpha|I| = 2\alpha\rad(I)$. Thus the hypotheses of Lemma \ref{lemmanonempty} are satisfied.
\end{proof}

\subsection{Applications}
We begin by showing that the sets $M_\epsilon$ and $\BA_1(\epsilon)$ are weakly absolute winning.

\begin{lemma}
\label{lemmaMepsilon}
For all $0 < \epsilon < 1$ and $0 < \beta < 1$, the set $M_\epsilon \cup (-\infty, 0) \cup (1, \infty)$ is $(\alpha,\beta,\rho)$-absolute winning where
\[
(\alpha,\beta,\rho) = \left(\tfrac{2\epsilon}{1-\epsilon} \beta^{-1},\beta,\tfrac{1-\epsilon}{2}\beta/2\right).
\]
\end{lemma}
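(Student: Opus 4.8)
The plan is to describe an explicit strategy for Alice in the $(\alpha,\beta,\rho)$-absolute game with target set $S := M_\epsilon \cup (-\infty,0) \cup (1,\infty)$, for the stated values of the parameters. The key structural fact about $M_\epsilon$ is that it is the attractor of the IFS consisting of the two contractions $x \mapsto \tfrac{1-\epsilon}{2} x$ and $x \mapsto \tfrac{1-\epsilon}{2} x + \tfrac{1+\epsilon}{2}$; equivalently, the complement of $M_\epsilon$ in $[0,1]$ is a union of ``gap intervals'', where at level $j$ of the construction one deletes from each surviving interval (of length $\big(\tfrac{1-\epsilon}{2}\big)^j$) a central open interval of length $\epsilon\big(\tfrac{1-\epsilon}{2}\big)^j$. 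Alice's goal will be to keep Bob's balls away from these gaps — more precisely, to ensure that whenever Bob's ball $B_m$ has radius comparable to $\big(\tfrac{1-\epsilon}{2}\big)^j$, it does not meet the level-$j$ gaps, except that she is allowed to ``give up'' on $[0,1]$ entirely if Bob ever escapes it, since the set $S$ contains all of $(-\infty,0)\cup(1,\infty)$.

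First I would set up the bookkeeping. Suppose Bob plays a ball (interval) $B_m = B(x_m,\rho_m)$. If $B_m$ is disjoint from $[0,1]$, or more generally if Alice can already guarantee the outcome lies outside $(0,1)$, she deletes nothing and wins trivially, so assume $B_m$ meets $[0,1]$. Choose the integer $j = j(m)$ so that the level-$j$ surviving intervals have length roughly $\rho_m$ — concretely, pick $j$ with $\big(\tfrac{1-\epsilon}{2}\big)^{j} \le 2\rho_m < \big(\tfrac{1-\epsilon}{2}\big)^{j-1}$, say. Then $B_m$ can overlap at most a bounded number (in fact at most two) of the level-$j$ surviving intervals, and the relevant ``danger set'' that Alice wants to avoid is the union of the level-$j$ gap intervals meeting $B_m$. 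The central gap in a surviving interval of length $\ell$ has length $\epsilon\ell$, and here $\ell \asymp \rho_m$, so each such gap has length $\asymp \epsilon\rho_m$. Alice deletes a single ball $A_m$ covering all of $B_m$'s intersection with the level-$j$ gaps; the content of the choice of $\alpha$ is precisely that one ball of radius $\alpha\rho_m = \tfrac{2\epsilon}{1-\epsilon}\beta^{-1}\rho_m$ suffices to cover this danger set, using that the gaps Alice needs to block, together with any ``edge effects'' from Bob's ball straddling two surviving intervals, all fit inside a window of length $\le 2\alpha\rho_m$. (The $\beta^{-1}$ factor and the constant $\tfrac{2\epsilon}{1-\epsilon}$ are there to absorb the worst case where Bob's next ball shrinks by exactly the factor $\beta$, forcing Alice to look one level deeper, and where the gap she must cover sits near the boundary of $B_m$.)

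The main thing to verify is the invariant: if Bob, on turn $m$, plays a ball $B_m \subset [0,1]$ that avoids Alice's previously deleted balls, then $B_m$ is contained in some level-$j(m)$ surviving interval (not a gap) — this is what makes Alice's deletion on turn $m$ legal and meaningful, and it propagates because $\rho_{m+1} \ge \beta\rho_m$ means $j(m+1) \ge j(m) - O(1)$, so the levels Alice has already protected stay protected. Passing to the limit: the radii $\rho_m$ tend to $0$ (else Alice wins by default), so $j(m) \to \infty$, and the outcome point $x_\infty$ lies in a nested intersection of surviving intervals of all levels, hence in $M_\epsilon$. The choice $\rho = \tfrac{1-\epsilon}{2}\cdot\tfrac{\beta}{2}$ is calibrated so that Bob's initial ball, having radius $\ge \rho$, has $j(0)$ large enough (i.e., $\ge 1$) for the construction to start cleanly — basically so the first level Alice needs to consider is genuinely inside $[0,1]$.

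The step I expect to be the main obstacle — and the one deserving the most careful calculation — is pinning down the exact constant in $\alpha$: one must check that a \emph{single} deleted ball of radius $\alpha\rho_m$ really does cover every level-$j(m)$ gap that $B_m$ can reach, across all the cases (Bob's ball centered in the middle of a surviving interval, near an endpoint, or straddling the gap between two adjacent surviving intervals at some coarser level), and that this remains true after the one-level slack introduced by the $\beta$-contraction. This is a finite case analysis in elementary interval arithmetic, and the factors $\tfrac{2\epsilon}{1-\epsilon}$ and $\beta^{-1}$ are exactly what come out of taking the worst case; everything else (the limiting argument, the legality of Alice's moves, the role of $(-\infty,0)\cup(1,\infty)$ as a ``safety net'') is routine once the invariant is stated correctly.
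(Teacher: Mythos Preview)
Your plan has the right shape but contains a real gap. First, the invariant you state --- that $B_m$ is contained in a level-$j(m)$ surviving interval --- is impossible as written: by your own choice of $j(m)$ you have $|B_m| = 2\rho_m \geq \lambda^{j(m)}$ (where $\lambda = \tfrac{1-\epsilon}{2}$), which is exactly the length of a level-$j(m)$ surviving interval, so $B_m$ cannot sit inside one except in the degenerate case of equality. More importantly, even after fixing the index, the ``protect one Cantor level per turn'' scheme breaks when $\beta < \lambda$. In that regime $|B_{m+1}| \geq \beta|B_m|$ permits $j(m+1) - j(m)$ to be as large as roughly $\log\beta/\log\lambda$, so several Cantor levels are skipped between consecutive turns and the gaps at the intermediate levels are never targeted. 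Your remark that ``$j(m+1) \geq j(m) - O(1)$'' is the wrong inequality (it is $j(m+1) - j(m)$ that must be bounded above) and the bound depends on $\beta$ in a way that spoils the single-ball deletion: the intermediate-level gaps inside $B_m$ are spread across the whole interval and cannot be covered by one ball of radius $\alpha\rho_m$.

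The paper's argument avoids any inductive invariant. Given Bob's ball $B$, it takes the largest $n$ with $\lambda^{n+1} \geq |B|$ and observes that \emph{all} gaps removed at stages $0,\dots,n$ are pairwise separated by at least $\lambda^{n+1} \geq |B|$; hence $B$ meets at most one of them, and Alice deletes that one if legal. To see this wins, fix any gap $I$ removed at stage $n$ (so $|I| = \epsilon\lambda^n$) and look at the \emph{first} turn $m$ with $|B_m| \leq \lambda^{n+1}$. The rules force $|B_m| \geq \beta\lambda^{n+1}$ --- either from $|B_{m-1}| > \lambda^{n+1}$ and $|B_m| \geq \beta|B_{m-1}|$, or from $|B_0| \geq 2\rho = \lambda\beta$ when $m=0$ --- and then $|I| = \epsilon\lambda^n = \alpha\cdot\beta\lambda^{n+1} \leq \alpha|B_m|$, so deleting $I$ is legal and $B_{m+1}$ misses it. The point is that Alice considers gaps from \emph{all} stages up to the current scale simultaneously, which is what makes the argument uniform in $\beta$; the $\beta^{-1}$ in $\alpha$ is there not to ``look one level deeper'' but to absorb the overshoot $|B_m| \geq \beta\lambda^{n+1}$ at that first turn.
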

\begin{proof}
To describe Alice's strategy, let $B$ be a move for Bob and we will describe Alice's response. Let $\lambda = \frac{1-\epsilon}{2}$, and let $n\geq 0$ be the largest integer such that $\lambda^{n+1} \geq |B|$, if such an integer exists. At the $(n+1)$st stage of the construction of $M_\epsilon$, all remaining intervals are of length $\lambda^{n+1}$, which means that the distances between the removed intervals of the first $n$ stages are all at least $\lambda^{n+1}$. So $B$ intersects at most one of these intervals, and Alice's strategy is to remove this interval if it is legal to do so, and otherwise to not delete anything. If the integer $n$ does not exist, then Alice does not delete anything.

To show that this strategy is winning, we must show that if Alice did not win by default, then the outcome of the game $x_\infty$ is in $M_\epsilon \cup (-\infty, 0) \cup (1, \infty)$. By contradiction, suppose that $x_\infty$ exists but is not in $M_\epsilon \cup (-\infty, 0) \cup (1, \infty)$. Then $x_\infty$ is in some interval $I$ that was removed during the construction of $M_\epsilon$. We will show that Alice deleted $I$ at some stage of the game, which contradicts the fact that $x_\infty$ was obtained by a sequence of legal plays for Bob.

Indeed, let $n\geq 0$ be the stage of the construction of $M_\epsilon$ at which $I$ was removed, so that $|I| = \lambda^n \epsilon$, and let $\turn\geq 0$ be the smallest integer such that $\lambda^{n+1} \geq |B_\turn|$. If $\turn > 0$, then we have $|B_\turn| \geq \beta |B_{\turn-1}| > \lambda^{n+1}\beta$, while if $\turn = 0$ then we have $|B_\turn| \geq 2\rho = \lambda\beta \geq \lambda^{n+1}\beta$. (The length of the ball/interval $B_\turn$ is equal to twice its radius.) So either way we have $|B_\turn| \geq \lambda^{n+1}\beta$. Thus, we have $|I| = \lambda^n \epsilon = \alpha\lambda^{n+1}\beta \leq \alpha|B_\turn|$, so on turn $\turn$ Alice is allowed to delete the interval $I$. Her strategy specifies that she does so, which completes the proof.
\end{proof}

\begin{lemma}
\label{lemmaBA}
For all $0 < \epsilon < 1/2$ and $\big(\frac{\epsilon}{1 - \epsilon}\big)^2 \leq \beta < 1$, the set $\BA_1(\epsilon)$ is $(\alpha,\beta,\rho)$-absolute winning, where
\[
(\alpha,\beta,\rho) = \left(\tfrac{2\epsilon}{1 - 2\epsilon} \beta^{-1},\beta,\beta/2\right).
\]
\end{lemma}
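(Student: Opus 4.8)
The plan is to mimic the structure of the proof of Lemma \ref{lemmaMepsilon}: describe a strategy for Alice which, on each turn, deletes the ``dangerous'' rational approximation that threatens to land inside Bob's current ball, and then verify that any point surviving all of Alice's deletions must lie in $\BA_1(\epsilon)$. The key geometric fact is that the excluded neighborhoods $\{x : |x - p/q| \leq \epsilon q^{-2}\}$ around distinct rationals of the same denominator $q$ are well-separated: two such rationals differ by at least $1/q$, so the intervals of radius $\epsilon q^{-2}$ around them are separated by a gap of at least $q^{-1} - 2\epsilon q^{-2} = q^{-1}(1-2\epsilon/q) \geq q^{-1}(1-2\epsilon) > 0$. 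Consequently, for a ball $B$ of a given size, there is at most one ``scale'' of denominator for which the corresponding forbidden interval can be large relative to $|B|$ and also meet $B$; more precisely, given Bob's ball $B_m = B(x_m,\rho_m)$, Alice should look at the rational $p/q$ minimizing $|x_m - p/q|$ among those with $q$ in the appropriate dyadic-type range determined by $\rho_m$, delete the interval $B(p/q, \epsilon q^{-2})$ if this is a legal move (i.e. if $\epsilon q^{-2} \leq \alpha \rho_m$), and otherwise delete nothing.

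Concretely, I would set $\lambda$-type parameters analogously to Lemma \ref{lemmaMepsilon}: for Bob's ball $B_m$ of radius $\rho_m$, let $q$ be chosen so that $\epsilon q^{-2}$ is comparable to $\rho_m$, say the largest $q$ with $\epsilon q^{-2} \geq $ (some fixed multiple of) $\rho_m$ — this is where the hypothesis $\beta \geq (\epsilon/(1-\epsilon))^2$ will enter, guaranteeing that consecutive achievable scales $\epsilon q^{-2}$ and $\epsilon (q+1)^{-2}$ do not ``skip over'' the factor $\beta$ by which Bob's ball can shrink in one turn. Then, exactly as in Lemma \ref{lemmaMepsilon}, suppose for contradiction that the outcome $x_\infty$ exists but $x_\infty \notin \BA_1(\epsilon)$, so there is a rational $p/q$ with $|x_\infty - p/q| \leq \epsilon q^{-2}$. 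Let $I = B(p/q, \epsilon q^{-2})$, let $m$ be the first turn at which Bob's ball $B_m$ is small enough that $\epsilon q^{-2} \leq \alpha \rho_m$ would be forced — using $|B_m| \geq \beta |B_{m-1}|$ for $m > 0$ and $\rho_0 \geq \rho = \beta/2$ for $m=0$ to get a lower bound on $\rho_m$ of the form $\rho_m \gtrsim \beta \cdot (\text{scale just above } \epsilon q^{-2})$ — and conclude that on turn $m$, Alice's radius budget $\alpha\rho_m$ is at least $\epsilon q^{-2} = |I|/2 = \rad(I)$, so deleting $I$ is legal. Since $x_\infty \in I$ and Alice's strategy deletes exactly the closest such interval at the relevant scale, we need the separation estimate to ensure that $I$ really is the unique candidate Alice considers at turn $m$ (so she does in fact delete it), contradicting that $x_\infty$ came from legal Bob plays avoiding Alice's deleted balls.

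The main obstacle I anticipate is bookkeeping the relationship between the continuous parameter $q$ (for which $\epsilon q^{-2}$ takes a discrete sequence of values that is \emph{not} geometric) and the geometric decay rate $\beta$ of Bob's balls: unlike in Lemma \ref{lemmaMepsilon}, where the construction scales are literally $\lambda^n$ and match the geometric structure of the game, here the gaps between successive values $\epsilon q^{-2}$ shrink like $q^{-3}$, so one must check that the constraint $\beta \geq (\epsilon/(1-\epsilon))^2$ is exactly what makes $\epsilon(q+1)^{-2} \geq \beta \cdot \epsilon q^{-2}$ hold for all relevant $q$ (equivalently $(q/(q+1))^2 \geq \beta$, which since $q \geq 1$ needs $\beta \leq 1/4$ at worst but in fact the relevant range of $q$ is bounded below by something like $1/\epsilon$, giving the stated bound). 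Getting the constant $\frac{2\epsilon}{1-2\epsilon}\beta^{-1}$ for $\alpha$ and $\beta/2$ for $\rho$ to come out precisely right — rather than up to an absolute constant — is the delicate part and will require carefully tracking which inequality is tight, just as the factor $\frac{2\epsilon}{1-\epsilon}\beta^{-1}$ in Lemma \ref{lemmaMepsilon} came from the identity $\lambda^n \epsilon = \alpha \lambda^{n+1}\beta$ with $\lambda = (1-\epsilon)/2$.
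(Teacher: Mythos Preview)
Your proposal has a genuine gap in the separation argument. You state as the ``key geometric fact'' that rationals \emph{with the same denominator} $q$ are separated by at least $1/q$, and you plan to use this to conclude that at each turn there is at most one dangerous interval for Alice to delete. But the set of rationals relevant at a given scale is not a set of rationals with a common denominator: for Bob's ball of length $\ell$, the dangerous $\Delta_\epsilon(p/q)$ are those with $q^{-2}$ in a multiplicative window of width $\beta^{-1}$, so several distinct denominators are in play simultaneously. What is actually needed is the cross-denominator bound $|p_1/q_1 - p_2/q_2| \geq 1/(q_1 q_2)$ together with an estimate of the form
\[
\frac{1}{q_1 q_2} - \frac{\epsilon}{q_1^2} - \frac{\epsilon}{q_2^2} \;>\; \ell
\]
for all $q_1 \leq q_2$ with $q_2/q_1 \leq \beta^{-1/2}$; this is where the paper's argument lives, and your same-denominator fact does not substitute for it.

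Relatedly, you have misidentified the role of the hypothesis $\beta \geq (\epsilon/(1-\epsilon))^2$. You read it as an upper-bound-type condition ensuring that consecutive scales $\epsilon q^{-2}$ and $\epsilon(q+1)^{-2}$ do not skip past a factor of $\beta$, i.e.\ $(q/(q+1))^2 \geq \beta$; but note that the hypothesis is a \emph{lower} bound on $\beta$, so it cannot serve that purpose (and your claim that the relevant $q$ are $\gtrsim 1/\epsilon$ is also false: at the first turn the relevant denominators are small). In the correct argument, the lower bound on $\beta$ is equivalent to $\beta^{-1/2} \leq 1/\epsilon - 1$, which caps the ratio $q_2/q_1$ of denominators appearing in the same collection and is exactly what makes the inequality $x - \epsilon x^2 - \epsilon \geq 1 - 2\epsilon$ hold for $x = q_2/q_1$, yielding the needed separation. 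Once you use the right separation estimate, the constants $\alpha = \tfrac{2\epsilon}{1-2\epsilon}\beta^{-1}$ and $\rho = \beta/2$ fall out directly from the resulting bound $|\Delta_\epsilon(p/q)| \leq \alpha |B_m|$ and the requirement $\beta^{-1}|B_0| \geq 1$.
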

\begin{proof}
For each $p/q\in \Q$ let
\[
\Delta_\epsilon(p/q) = B(p/q,\epsilon q^{-2})
\]
and note that
\[
\BA_1(\epsilon) = \R\butnot\bigcup_{p/q\in\Q} \Delta_\epsilon(p/q).
\]
Now we have
\[
\left|\frac{p_1}{q_1} - \frac{p_2}{q_2}\right| \geq \frac{1}{q_1 q_2}
\]
for all $p_1/q_1 \neq p_2/q_2$. Now fix $\ell > 0$ and let
\[
\CC_\ell := \{\Delta_\epsilon(p/q) : \ell < (1-2\epsilon) q^{-2} \leq \beta^{-1}\ell\}.
\]
If $\Delta_\epsilon(p_1/q_1),\Delta_\epsilon(p_2/q_2)$ are two distinct members of $\CC_\ell$ with $q_1 \leq q_2$, then the distance between them is
\[
\left|\frac{p_1}{q_1} - \frac{p_2}{q_2}\right| - \frac{\epsilon}{q_1^2} - \frac{\epsilon}{q_2^2} \geq \frac{1}{q_1 q_2} - \frac{\epsilon}{q_1^2} - \frac{\epsilon}{q_2^2} = \frac{1}{q_2^2} \left[\frac{q_2}{q_1} - \epsilon\left(\frac{q_2}{q_1}\right)^2 - \epsilon\right] \geq \frac{1}{q_2^2}(1 - 2\epsilon) > \ell,
\]
where the second-to-last inequality is derived from the fact that $x - \epsilon x^2 - \epsilon \geq 1 - 2\epsilon$ for all $x\in [1,1/\epsilon - 1]$ and in particular for all $x\in [1,\beta^{-1/2}]$. Thus each interval of length $\ell$ intersects at most one member of $\CC_\ell$.

Now Alice's strategy can be given as follows: If Bob chooses an interval $B_\turn$ of length $\ell = |B_\turn|$, then Alice deletes the unique member $\Delta_\epsilon(p/q)$ of the collection $\CC_\ell$ that intersects $B_\turn$. By construction, the length of this member is
\[
|\Delta_\epsilon(p/q)| = \frac{2\epsilon}{q^2} \leq \frac{2\epsilon}{1 - 2\epsilon} \beta^{-1} \ell = \alpha |B_\turn|,
\]
meaning that it is legal to play. To show that this strategy is winning, we observe that the length of Bob's first interval $B_0$ is at least $2\rho = \beta$, and thus $\beta^{-1} |B_0| \geq 1 \geq q^{-2}$ for all $p/q\in\Q$. So if $\turn$ is the smallest integer such that $|B_\turn| < q^{-2}$, then $|B_\turn| < q^{-2} \leq \beta^{-1} |B_\turn|$ and thus Alice must delete $\Delta_\epsilon(p/q)$ on turn $\turn$.
\end{proof}

\begin{lemma}
\label{lemmaFn}
For all $n\geq 2$ and $\frac{1}{n^2} \leq \beta < 1$, the set $(-\infty,0)\cup F_n\cup (1,\infty)$ is $(\alpha,\beta,\rho)$-absolute winning, where
\[
(\alpha,\beta,\rho) = \left(\tfrac{2}{n-1} \beta^{-1},\beta,\beta/2\right).
\]
\end{lemma}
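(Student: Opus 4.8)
The plan is to deduce the lemma from Lemma~\ref{lemmaBA} using the classical relationship between boundedness of the partial quotients and bad approximability. The key inclusion is $\BA_1\!\left(\tfrac{1}{n+1}\right)\cap(0,1)\subseteq F_n$; granting this, and using that $\BA_1(\epsilon)$ contains no rational numbers, we get $\BA_1\!\left(\tfrac1{n+1}\right)\subseteq(-\infty,0)\cup F_n\cup(1,\infty)$. Since any superset of an $(\alpha,\beta,\rho)$-absolute winning set is again $(\alpha,\beta,\rho)$-absolute winning --- Alice just reuses her strategy, the target condition only becoming easier to meet --- it then suffices to apply Lemma~\ref{lemmaBA} to $\BA_1\!\left(\tfrac1{n+1}\right)$ and check that the parameters it outputs are exactly $\big(\tfrac2{n-1}\beta^{-1},\beta,\beta/2\big)$.

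For the inclusion, let $x=[0;a_1,a_2,\ldots]\in\BA_1\!\left(\tfrac1{n+1}\right)\cap(0,1)$, which is automatically irrational, and let $p_k/q_k$ be its convergents (with $p_0/q_0=0/1$). For each $k\geq0$, the defining inequality of $\BA_1\!\left(\tfrac1{n+1}\right)$, applied to the rational $p_k/q_k$, gives $|q_kx-p_k|>\tfrac1{n+1}\cdot\tfrac1{q_k}$, while the standard estimate for convergents gives $|q_kx-p_k|<\tfrac1{q_{k+1}}=\tfrac1{a_{k+1}q_k+q_{k-1}}\leq\tfrac1{a_{k+1}q_k}$. Combining, $a_{k+1}<n+1$, i.e.\ $a_{k+1}\leq n$, for every $k\geq0$, so all partial quotients of $x$ are at most $n$ and hence $x\in F_n$.

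It remains to apply Lemma~\ref{lemmaBA} with $\epsilon=\tfrac1{n+1}$. As $n\geq2$ we have $\epsilon<\tfrac12$, and the hypothesis $\big(\tfrac{\epsilon}{1-\epsilon}\big)^2\leq\beta$ reads precisely $\tfrac1{n^2}\leq\beta$. The lemma then gives that $\BA_1\!\left(\tfrac1{n+1}\right)$ is $(\alpha,\beta,\rho)$-absolute winning with $\alpha=\tfrac{2\epsilon}{1-2\epsilon}\beta^{-1}=\tfrac2{n-1}\beta^{-1}$ and $\rho=\beta/2$, which are exactly the parameters in the statement. Combining with the first part finishes the proof.

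I do not expect a genuine obstacle; the whole argument is a short reduction. The only point requiring care is pinning down the right constant in the inclusion: one must use $\epsilon=\tfrac1{n+1}$ (rather than the smaller $\tfrac1{n+2}$ appearing in the reverse inclusion $F_n\subseteq\BA_1\!\left(\tfrac1{n+2}\right)$), since only that choice makes the conclusion of Lemma~\ref{lemmaBA} land on the stated triple exactly.
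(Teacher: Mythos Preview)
Your proposal is correct and follows essentially the same approach as the paper: the paper's proof consists of a single sentence citing the inclusion $[0,1]\cap\BA_1(\tfrac1{n+1})\subset F_n$ (referenced to Khinchin) and invoking Lemma~\ref{lemmaBA}. You have simply spelled out the convergent estimate that justifies this inclusion and checked the parameter arithmetic explicitly, which the paper leaves to the reader.
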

\begin{proof}
This follows immediately from Lemma \ref{lemmaBA} together with the inclusion
\[
[0,1]\cap \BA_1(\tfrac{1}{n+1}) \subset F_n
\]
(see e.g. \cite[Theorems 6 and 9]{Khinchin_book}).
\end{proof}

We now use the above lemmas to prove Theorems \ref{theoremM49}, \ref{theoremCF19}, and \ref{theoremF49}.

\begin{proof}[Proof of Theorem \ref{theoremM49}]
We will in fact show that every $a\in M_\epsilon$ is contained in some arithmetic progression of length 3. Indeed, apply Lemma \ref{lemmaMepsilon} with $0 < \epsilon \leq 1/49$ and $\beta = 1/6$ and combine with Proposition \ref{propositionmonotonicity1} to get that $S_1 \df (-\infty,0)\cup M_\epsilon \cup (1,\infty)$ is $(1/4,1/6,1/24)$-absolute winning. By Proposition \ref{propositioninvariance1} it follows that $S_2 \df 2S_1 - a$ is $(1/4,1/6,1/12)$-absolute winning and thus by Proposition \ref{propositionintersection1}, $S_1\cap S_2$ is $(1/4,1/6,1/12,2)$-absolute winning. Since $2(1/4)+3(1/6)=1$, Corollary \ref{corollarynonempty} shows that $S_1\cap S_2 \cap [0,1/6]\neq \emptyset$ and $S_1\cap S_2\cap [5/6,1] \neq \emptyset$; in particular $S_1\cap S_2\cap [0,1]\butnot\{a\} \neq \emptyset$. If $t\in S_1\cap S_2 \cap [0,1]\butnot\{a\}$, then $\{a,\frac{a+t}{2},t\}$ is an arithmetic progression of length 3 on $M_\epsilon$. The proof for $F_{49}$ is similar, using Lemma \ref{lemmaFn} instead of Lemma \ref{lemmaMepsilon}.
\end{proof}

\begin{proof}[Proof of Theorem \ref{theoremCF19}]
Apply Lemma \ref{lemmaFn} with $n=19$ and $\beta = 1/3$ to get that $(-\infty,0)\cup F_{19}\cup (1,\infty)$ is $(1/3,1/3,1/6)$-absolute winning. Now in the $(1/3,1/3,1/6)$-absolute game, Bob can start by playing the interval $[0,1]$ and on each turn can legally play an interval appearing in the construction of the Cantor set (cf. the proof of Corollary \ref{corollarynonempty}). The outcome of the resulting game will be a member of $F_{19} \cap C$, so we have $F_{19} \cap C \neq \emptyset$.
\end{proof}

\begin{proof}[Proof of Theorem \ref{theoremF49}]
Apply Lemma \ref{lemmaFn} with $n=49$ and $\beta = 1/6$ to get that $S_1 \df (-\infty,0)\cup F_{49}\cup (1,\infty)$ is $(1/4,1/6,1/12)$-absolute winning. Now fix $t\in [1/6,11/6]$ and observe that by Proposition \ref{propositioninvariance1} the set $S_2 \df t - S_1$ is also $(1/4,1/6,1/12)$-absolute winning, and thus $S = S_1\cap S_2$ is $(1/4,1/6,1/12,2)$-absolute winning. Since $t\in [1/6,11/6]$, the length of the interval $I \df [\max(0,t-1),\min(1,t)]$ is at least $2\rho = 1/6$. On the other hand, we have $2(1/4) + 3(1/6) = 1$, and thus by Corollary \ref{corollarynonempty}, we have $S \cap I \neq \emptyset$. But $S\cap I \subset F_{49}\cap (t-F_{49})$, so we get $t\in F_{49}+F_{49}$, which completes the proof.
\end{proof}

Although the absolute game is good at getting simple quantitative results when small numbers are involved, the bounds coming from Lemma \ref{lemmanonempty} get worse asymptotically as the Hausdorff dimension of the sets in question tends to 1, and in particular are not good enough to prove Theorem \ref{theoremarithbounds}. To get a better asymptotic estimate we need to introduce another game, the potential game.

\section{The potential game and its properties}
\label{sectionpotential}

We now define the potential game. In the next two sections we will use it to prove Theorems \ref{theoremarithbounds}, \ref{theoremBAdcodim}, and \ref{theoremKepsilondelta}. Like in the previous section, the version of the potential game we give below is slightly different from the original one found in \cite[Appendix C]{FSU4}. The first two changes are the same as for the absolute game (introducing the parameter $\rho$, and splitting $\beta$ into two different parameters), and we also introduce the possibility that the parameter $c$ is equal to zero, to provide a clearer relation between the absolute game and the potential game.

\begin{definition}[Cf. {\cite[Definition C.4]{FSU4}}]
\label{Potential}
Let $X$ be a complete metric space and let $\HH$ be a collection of closed subsets of $X$. Given $\alpha,\beta,\rho > 0$ and $c\geq 0$, Alice and Bob play the \emph{$(\alpha,\beta,c,\rho,\HH)$-potential game} as follows:
\begin{itemize}
\item As before the turn order is alternating, with Bob playing first.
\item On the $\turn$th turn, Bob plays a closed ball $B_\turn = B(x_\turn,\rho_\turn)$, and Alice responds by choosing a finite or countably infinite collection $\AA_\turn$ of sets of the form $\NN(\hyp_{i,\turn},\rho_{i,\turn})$, with $\hyp_{i,\turn}\in\HH$ and $\rho_{i,\turn} > 0$, satisfying
\begin{equation}
\label{alicerules}
\sum_i \rho_{i,\turn}^c \leq (\alpha \rho_\turn)^c,
\end{equation}
where $\NN(\hyp_{i,\turn},\rho_{i,\turn})$ denotes the $\rho_{i,\turn}$-thickening of $\hyp_{i,\turn}$, i.e.
\[
\NN(\hyp_{i,\turn},\rho_{i,\turn}) = \{x\in X : \dist(x,\hyp_{i,\turn}) \leq \rho_{i,\turn}\}.
\]
As before we say that Alice ``deletes'' the collection $\AA_\turn$, though the meaning of this will be slightly different from what it was in the absolute game.

If $c = 0$, then instead of requiring \eqref{alicerules}, we require the collection $\AA_\turn$ to consist of a single element, which must have thickness $\leq \alpha \rho_\turn$.
\item On the first (0th) turn, Bob's ball $B_0 = B(x_0,\rho_0)$ is required to satisfy
\begin{equation}
\label{bobrules1}
\rho_0 \geq \rho,
\end{equation}
while on subsequent turns his ball $B_{\turn + 1} = B(x_{\turn+1},\rho_{\turn+1})$ is required to satisfy
\begin{equation}
\label{bobrules}
\rho_{\turn + 1}\geq \beta \rho_\turn \text{ and } B_{\turn + 1} \subset B_\turn,
\end{equation}
with no reference made to the collection $\AA_\turn$ chosen by Alice on the previous turn.
\end{itemize}
As before the result after infinitely many turns is an infinite descending sequence of balls $B_0 \supset B_1 \supset \cdots$, and if the radii of these balls does not tend to zero we say that Alice wins by default, while otherwise we call the intersection point $x_\infty$ the outcome of the game. However, we now make the additional rule that if the outcome of the game is a member of any ``deleted'' element $\NN(\hyp_{i,\turn},\rho_{i,\turn})$ of one of the collections $\AA_\turn$ chosen throughout the game, then Alice wins by default.

Now let $S \subset X$, and suppose that Alice has a strategy guaranteeing that if she does not win by default, then $x_\infty \in S$. Then the set $S$ is called \emph{$(\alpha,\beta,c,\rho,\HH)$-potential winning}.
\end{definition}

\begin{remark}
\label{remarkc0vsabsolute}
Let $\PP$ be the collection of singletons in $X$. When $c = 0$ and $\HH = \PP$, the potential game is similar to the absolute game considered in the previous section. The only difference is that while in the absolute game Bob must immediately move to avoid Alice's choice, in the potential game he must only do so eventually. This is a significant difference because it means that Bob gets a much larger advantage from having $\alpha$ small, since it means he can wait several turns before avoiding a region.

Thus, every $(\alpha,\beta,0,\rho,\PP)$-potential winning set is $(\alpha,\beta,\rho)$-absolute winning, but the converse is not true. However, the proofs of Lemmas \ref{lemmaMepsilon}-\ref{lemmaFn} in fact show that the sets in question are $(\alpha,\beta,0,\rho,\PP)$-potential winning, since the proofs only use the fact that the outcome is not in any deleted set, not that the deleted sets are disjoint from Bob's subsequent moves. This fact will be used in the applications below.
\end{remark}

\begin{notation}
\label{notationthi}
In what follows we will use the notation
\[
\thi(\NN(\hyp,\rho)) = \rho,
\]
i.e. $\thi(\NN(\hyp,\rho))$ is the ``thickness'' of $\NN(\hyp,\rho)$.
\end{notation}


The basic properties of the potential game are similar to those for the absolute game. We omit the proofs as they are essentially the same as the proofs of Propositions \ref{propositionintersection1}-\ref{propositioninvariance1}.

\begin{proposition}[Countable intersection property]
\label{propositionintersection2}
Let $J$ be a countable (finite or infinite) index set, and for each $j\in J$, let $S_j$ be an $(\alpha_j,\beta,c,\rho,\HH)$-potential winning set, where $c > 0$. 
Then the set $S = \bigcap_{j\in J} S_j$ is $(\alpha,\beta,c,\rho,\HH)$-potential winning, where
\begin{equation}
\label{alphacsum}
\alpha^c = \sum_{j\in J} \alpha_j^c,
\end{equation}
assuming that the series converges.
\end{proposition}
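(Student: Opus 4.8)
The plan is to imitate the proof of the finite intersection property for the absolute game (Proposition \ref{propositionintersection1}), namely to have Alice simultaneously play all of the strategies witnessing that the individual $S_j$ are $(\alpha_j,\beta,c,\rho,\HH)$-potential winning. The one new wrinkle compared to the absolute game is that Alice's move in the potential game is not a finite collection of balls subject to a cardinality bound, but a countable collection of thickened hypotheses subject to the ``potential'' bound \eqref{alicerules} on the sum of $c$-th powers of thicknesses. So the combinatorial counting argument ($k = \sum k_j$) has to be replaced by a computation showing that the union of Alice's legal moves in the games is still a legal move in the combined game, with parameter $\alpha$ given by \eqref{alphacsum}.

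First I would set up notation: let $\sigma_j$ be a strategy for Alice witnessing that $S_j$ is $(\alpha_j,\beta,c,\rho,\HH)$-potential winning. Note that all the games share the same $\beta$, $c$, $\rho$, and $\HH$, so a single sequence of Bob moves $B_0\supset B_1\supset\cdots$ is simultaneously a legal run of Bob's moves against every $\sigma_j$ (the rules \eqref{bobrules1}, \eqref{bobrules} make no reference to Alice's moves). Alice's combined strategy is: on turn $\turn$, play $\AA_\turn := \bigcup_{j\in J}\AA_\turn^{(j)}$, where $\AA_\turn^{(j)}$ is the collection $\sigma_j$ would play in the $j$-th game in response to $B_0,\ldots,B_\turn$. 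This is a countable union of countable collections, hence countable, so it is of the allowed form. To check legality, each $\AA_\turn^{(j)}$ satisfies $\sum_i \thi(\NN_{i,\turn}^{(j)})^c \leq (\alpha_j\rho_\turn)^c$, so summing over $j$,
\[
\sum_{N\in\AA_\turn}\thi(N)^c \;\leq\; \sum_{j\in J}(\alpha_j\rho_\turn)^c \;=\; \rho_\turn^c\sum_{j\in J}\alpha_j^c \;=\; (\alpha\rho_\turn)^c,
\]
which is exactly \eqref{alicerules} for the combined game; the hypothesis that $\sum_j\alpha_j^c$ converges (and $c>0$) is what makes $\alpha$ well-defined and finite. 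Here I am using that $\thi$ is additive-in-$c$-th-powers across a disjoint union indexing, and rearrangement of a convergent series of nonnegative terms is harmless.

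Finally I would argue that this strategy is winning for $S$. Suppose Alice does not win by default in the combined game. In particular the radii $\rho_\turn\to 0$, so there is an outcome $x_\infty$, and $x_\infty$ lies in no deleted set $N\in\bigcup_\turn\AA_\turn$. Fix $j\in J$. The run $B_0\supset B_1\supset\cdots$ together with Alice's responses $\AA_\turn^{(j)}$ is a legal play of the $j$-th game in which Alice follows $\sigma_j$; since $\AA_\turn^{(j)}\subseteq\AA_\turn$, the outcome $x_\infty$ lies in no element of any $\AA_\turn^{(j)}$ either, so Alice does not win that game by default, and therefore $x_\infty\in S_j$. As $j\in J$ was arbitrary, $x_\infty\in\bigcap_{j\in J}S_j = S$. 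This shows the combined strategy witnesses that $S$ is $(\alpha,\beta,c,\rho,\HH)$-potential winning.

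The only real subtlety — and the step I would be most careful about — is the interaction with the ``Alice wins by default'' convention: one must verify that failing to win by default in the combined game implies failing to win by default in each component game, which is why it matters that $\AA_\turn^{(j)}$ is a \emph{sub}collection of $\AA_\turn$ and that the $\rho_\turn$-sequence is shared. (This is precisely the analogue of Remark \ref{remarkjustifyconvention} for the potential game, and also explains why the statement needs $c>0$: when $c=0$ Alice is restricted to a single deleted set per turn, so the union construction fails, just as noted in Remark \ref{remarkc0vsabsolute}.) Everything else is the bookkeeping displayed above.
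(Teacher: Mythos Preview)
Your proposal is correct and follows exactly the approach the paper indicates: the paper omits this proof, stating it is ``essentially the same as the proofs of Propositions \ref{propositionintersection1}--\ref{propositioninvariance1}'', and your write-up is precisely the natural adaptation of the proof of Proposition \ref{propositionintersection1}, with the cardinality bound $k=\sum_j k_j$ replaced by the potential bound $\alpha^c=\sum_j\alpha_j^c$. Your attention to the default-win convention and to why $c>0$ is required is apt and matches the paper's remarks.
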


\begin{proposition}[Monotonicity]
\label{propositionmonotonicity2}
If $S$ is $(\alpha,\beta,c,\rho,\HH)$-potential winning and $\alpha\leq\w\alpha$, $\beta \leq \w\beta$, $c\leq \w c$, $\rho \leq \w\rho$, and $\HH \subset \w\HH$, then $S$ is $(\w\alpha,\w\beta,\w c,\w\rho,\w\HH)$-potential winning.
\end{proposition}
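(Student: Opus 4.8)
The plan is to mimic the proof of Proposition \ref{propositionmonotonicity1} for the absolute game: switching to a game with parameters at least as favorable to Alice simply enlarges her set of legal moves and shrinks Bob's, so any winning strategy for the small-parameter game remains winning for the larger one. Concretely, I would fix a strategy for Alice witnessing that $S$ is $(\alpha,\beta,c,\rho,\HH)$-potential winning, and show that the very same strategy (after reinterpreting Alice's deleted collections as collections in the $\w\HH$-game, which is legitimate since $\HH\subset\w\HH$) is legal and winning in the $(\w\alpha,\w\beta,\w c,\w\rho,\w\HH)$-game.

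First I would handle Bob's side: since $\w\rho\geq\rho$, every opening ball $B_0$ legal in the $\w\rho$-game satisfies $\rho_0\geq\w\rho\geq\rho$ and hence is legal in the $\rho$-game; and since $\w\beta\geq\beta$, the constraint $\rho_{\turn+1}\geq\w\beta\rho_\turn$ implies $\rho_{\turn+1}\geq\beta\rho_\turn$, so any legal sequence of Bob moves in the larger game is also legal in the smaller game. Thus Alice can pretend Bob is playing the $(\alpha,\beta,c,\rho,\HH)$-game and respond with her fixed strategy. Next I would check Alice's moves are legal in the larger game: her response $\AA_\turn$ to $B_\turn=B(x_\turn,\rho_\turn)$ satisfies $\sum_i\rho_{i,\turn}^c\leq(\alpha\rho_\turn)^c$, and since $\alpha\leq\w\alpha$ and (for $c,\w c>0$) $t\mapsto t^{c}$ comparisons go through, we need $\sum_i\rho_{i,\turn}^{\w c}\leq(\w\alpha\rho_\turn)^{\w c}$. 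The cleanest way is to normalize by $\rho_\turn$: writing $s_i=\rho_{i,\turn}/(\alpha\rho_\turn)$ we have $\sum_i s_i^c\leq 1$, hence each $s_i\leq 1$, hence $s_i^{\w c}\leq s_i^{c}$, hence $\sum_i s_i^{\w c}\leq 1$, i.e.\ $\sum_i\rho_{i,\turn}^{\w c}\leq(\alpha\rho_\turn)^{\w c}\leq(\w\alpha\rho_\turn)^{\w c}$, as required; the thickenings themselves live in $\w\HH$ since $\hyp_{i,\turn}\in\HH\subset\w\HH$.

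The main obstacle — really the only subtlety — is the bookkeeping around the degenerate value $c=0$, where the rule changes to ``$\AA_\turn$ is a single thickening of thickness $\leq\alpha\rho_\turn$''. There are two sub-cases when $c\leq\w c$ with $c=0$: if $\w c=0$ too, the argument is immediate since $\alpha\leq\w\alpha$. If $\w c>0$, then Alice's single deleted set $\NN(\hyp,\rho')$ with $\rho'\leq\alpha\rho_\turn$ automatically satisfies $(\rho')^{\w c}\leq(\alpha\rho_\turn)^{\w c}\leq(\w\alpha\rho_\turn)^{\w c}$, so the singleton collection $\{\NN(\hyp,\rho')\}$ is a legal move in the $\w c$-game; again $\hyp\in\HH\subset\w\HH$. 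In all cases, the set of thickenings Alice deletes over the course of the (reinterpreted) game is unchanged, so the outcome-avoidance winning condition is identical: if Alice did not win by default, then $x_\infty\in S$. Hence $S$ is $(\w\alpha,\w\beta,\w c,\w\rho,\w\HH)$-potential winning, which is what we wanted.

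I would present this compressed to a few sentences in the paper, since, as the excerpt already notes, it is essentially the same as the proof of Proposition \ref{propositionmonotonicity1}, with the only genuinely new point being the inequality $\sum_i\rho_{i,\turn}^{\w c}\leq\sum_i\rho_{i,\turn}^{c}$ for terms bounded by $\alpha\rho_\turn$ (equivalently, after rescaling, for terms in $[0,1]$) together with the remark that passing from $\HH$ to a superset $\w\HH$ never removes a legal Alice move.
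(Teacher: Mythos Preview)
Your proposal is correct and follows essentially the same approach as the paper, which omits the proof as being ``essentially the same'' as Proposition \ref{propositionmonotonicity1} and only remarks that the new ingredient is the H\"older-type inequality $\big(\sum_i \alpha_i^{\w c}\big)^{1/\w c} \leq \big(\sum_i \alpha_i^c\big)^{1/c}$ for $c\leq\w c$. Your normalization argument (reducing to $\sum_i s_i^c\leq 1$ with $s_i\in[0,1]$, hence $s_i^{\w c}\leq s_i^c$) is precisely the standard proof of that inequality, and your explicit handling of the $c=0$ case and of $\HH\subset\w\HH$ fills in details the paper leaves implicit.
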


Note that the proof of Proposition \ref{propositionmonotonicity2} uses the H\"older inequality
\[
\left(\sum_i \alpha_i^{\w c}\right)^{1/\w c} \leq \left(\sum_i \alpha_i^c\right)^{1/c} \text{ when } c \leq \w c.
\]

\begin{proposition}[Invariance under similarities]
\label{propositioninvariance2}
Let $f:X\to Y$ be a bijection that satisfies
\[
\dist(f(x),f(y)) = \lambda \dist(x,y) \all x,y\in X.
\]
Then a set $S \subset X$ is $(\alpha,\beta,c,\rho,\HH)$-absolute winning if and only if the set $f(S) \subset Y$ is $(\alpha,\beta,c,\lambda\rho,f(\HH))$-absolute winning.
\end{proposition}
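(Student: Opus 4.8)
The plan is to show that the $(\alpha,\beta,c,\lambda\rho,f(\HH))$-potential game on $Y$ with target set $f(S)$ is a disguised copy of the $(\alpha,\beta,c,\rho,\HH)$-potential game on $X$ with target set $S$, exactly as in the proof of Proposition \ref{propositioninvariance1}. The dictionary between the two games sends Bob's ball $B_\turn = B(x_\turn,\rho_\turn)$ to $f(B_\turn) = B(f(x_\turn),\lambda\rho_\turn)$ and Alice's deleted set $\NN(\hyp_{i,\turn},\rho_{i,\turn})$ to $f\big(\NN(\hyp_{i,\turn},\rho_{i,\turn})\big)$.

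First I would record the elementary facts that make this dictionary well defined. Since $f$ is a bijective similarity with ratio $\lambda$, it carries the closed ball $B(x,r)$ onto the closed ball $B(f(x),\lambda r)$, and for any closed set $\hyp$ it carries $\NN(\hyp,r)$ onto $\NN(f(\hyp),\lambda r)$, using the identity $\dist(f(x),f(\hyp)) = \lambda\dist(x,\hyp)$ (which follows from surjectivity of $f$). In particular $f$ induces a bijection $\HH \to f(\HH)$, so Alice's admissible sets in the $Y$-game are precisely the $f$-images of her admissible sets in the $X$-game, with thickness multiplied by $\lambda$.

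Next I would check that every rule of the game is preserved under the dictionary. Bob's opening constraint $\rho_0 \geq \rho$ becomes $\lambda\rho_0 \geq \lambda\rho$; his later constraints $\rho_{\turn+1}\geq\beta\rho_\turn$ and $B_{\turn+1}\subset B_\turn$ become $\lambda\rho_{\turn+1}\geq\beta(\lambda\rho_\turn)$ and $f(B_{\turn+1})\subset f(B_\turn)$ (the latter because $f$ is injective). For Alice, when $c>0$ the constraint \eqref{alicerules}, i.e. $\sum_i \rho_{i,\turn}^c \leq (\alpha\rho_\turn)^c$, is equivalent after multiplying by $\lambda^c$ to $\sum_i (\lambda\rho_{i,\turn})^c \leq (\alpha\lambda\rho_\turn)^c$; and when $c=0$ the single-set constraint $\thi \leq \alpha\rho_\turn$ becomes $\thi \leq \alpha(\lambda\rho_\turn)$. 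Finally, the radii $\lambda\rho_\turn$ tend to zero iff the $\rho_\turn$ do, so Alice wins by default for the ``radii do not shrink'' reason in one game iff she does in the other; and when the radii do shrink, the outcome of the $Y$-game is $f(x_\infty)$, where $x_\infty$ is the outcome of the $X$-game, so $f(x_\infty)$ lies in a deleted set $f\big(\NN(\hyp_{i,\turn},\rho_{i,\turn})\big)$ iff $x_\infty$ lies in $\NN(\hyp_{i,\turn},\rho_{i,\turn})$, and $f(x_\infty) \in f(S)$ iff $x_\infty \in S$. Hence a strategy for Alice is winning in the $X$-game iff its image under the dictionary is winning in the $Y$-game, which yields the claimed equivalence; the reverse implication uses $f^{-1}$, which is a similarity of ratio $\lambda^{-1}$.

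There is no genuine obstacle here — the argument is pure bookkeeping, which is why the paper omits it — but the one point deserving a moment's care is the behaviour of the ambient family of allowed sets: one must note that the correct family for the transported game is $f(\HH)$ rather than $\HH$ itself, and that $f$ genuinely sends $\HH$-thickenings to $f(\HH)$-thickenings with the thickness scaled by $\lambda$, so that the potential bound transforms homogeneously. The degenerate case $c=0$ should also be handled separately, since there Alice's constraint is a bound on a single set's thickness rather than on a weighted sum.
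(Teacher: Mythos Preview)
Your proof is correct and is precisely the approach the paper has in mind: the authors omit the argument, saying it is essentially the same as the proof of Proposition \ref{propositioninvariance1}, which is exactly the ``disguised game'' dictionary you spell out. Your attention to the $c=0$ case and to the fact that the transported family must be $f(\HH)$ simply makes explicit what the paper leaves implicit.
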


\ignore{

\begin{proposition}[Invariance under bi-Lipschitz maps]
\label{propositionlipinv}
If $S$ is $(\alpha,\beta,c,\rho,\HH)$-potential winning and $f:X\to Y$ is a $K_3$-Lipschitz homeomorphism whose inverse is $K_4$-Lipschitz, then $f(S) \subset Y$ is $(K_3 K_4\alpha,\beta,c,K_3\rho,f(\HH))$-potential winning.
\end{proposition}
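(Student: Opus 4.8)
The plan is to mimic the proofs of Propositions~\ref{propositionintersection2}--\ref{propositioninvariance2} by having Alice run a ``shadow'' game: she plays the $(K_3K_4\alpha,\beta,c,K_3\rho,f(\HH))$-potential game on $Y$ while internally simulating the $(\alpha,\beta,c,\rho,\HH)$-potential game on $X$. Here $f(\HH)=\{f(\hyp):\hyp\in\HH\}$, which is a collection of closed subsets of $Y$ since $f$ is a homeomorphism. First I would record the elementary metric facts used throughout: since $f^{-1}$ is $K_4$-Lipschitz we have $\dist(x,y)\le K_4\dist(f(x),f(y))$ for all $x,y\in X$, whence $K_3K_4\ge1$; moreover $f^{-1}(B(y,r))\subset B(f^{-1}(y),K_4 r)$ for every ball $B(y,r)\subset Y$, and $f(\NN(\hyp,\rho))\subset\NN(f(\hyp),K_3\rho)$ for every $\hyp\subset X$ and $\rho>0$, the latter because $\dist(f(x),f(\hyp))\le K_3\dist(x,\hyp)$.

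Fixing a strategy $\sigma$ that witnesses that $S$ is $(\alpha,\beta,c,\rho,\HH)$-potential winning, I would have Alice play on $Y$ as follows: when Bob plays $B_\turn=B(y_\turn,\rho_\turn)$, Alice forms the shadow ball $\w B_\turn\df B(f^{-1}(y_\turn),K_4\rho_\turn)$, feeds $\w B_0,\w B_1,\dots$ to $\sigma$, and whenever $\sigma$ responds to $\w B_\turn$ by deleting a family $\{\NN(\hyp_{i,\turn},\rho_{i,\turn})\}_i$ (a single set, if $c=0$), Alice deletes $\{\NN(f(\hyp_{i,\turn}),K_3\rho_{i,\turn})\}_i$ in the $Y$-game. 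The bookkeeping explains the distortion of the parameters. On the one hand $\w\rho_0=K_4\rho_0\ge K_4(K_3\rho)\ge\rho$ (using $\rho_0\ge K_3\rho$ and $K_3K_4\ge1$) and $\w\rho_{\turn+1}=K_4\rho_{\turn+1}\ge\beta K_4\rho_\turn=\beta\w\rho_\turn$, so the shadow balls obey Bob's radius constraints for the $(\alpha,\beta,c,\rho,\HH)$-game. On the other hand, since $\sum_i\rho_{i,\turn}^c\le(\alpha\w\rho_\turn)^c=(\alpha K_4\rho_\turn)^c$, the translated family satisfies $\sum_i(K_3\rho_{i,\turn})^c\le(K_3K_4\alpha\,\rho_\turn)^c$, which is exactly the constraint~\eqref{alicerules} for the parameter $K_3K_4\alpha$ (and likewise for the single-set thickness bound when $c=0$). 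So the factor $K_4$ is paid when inflating Bob's radii and the factor $K_3$ when pushing Alice's thickenings forward, while $\beta$ is untouched.

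To conclude, I would verify the outcome. Suppose Alice does not win by default in the $Y$-game; then $\rho_\turn\to0$, hence $\w\rho_\turn\to0$, and since $f^{-1}(B_\turn)\subset\w B_\turn$ for all $\turn$, the point $f^{-1}(y_\infty)$ lies in every shadow ball and so equals the shadow outcome $x_\infty$ (the unique point of $\bigcap_\turn\w B_\turn$). If $x_\infty$ belonged to some deleted set $\NN(\hyp_{i,\turn},\rho_{i,\turn})$, then by the thickening bound $y_\infty=f(x_\infty)$ would belong to $\NN(f(\hyp_{i,\turn}),K_3\rho_{i,\turn})$, contradicting that Alice did not win by default in the $Y$-game; hence she does not win by default in the shadow game either, so $x_\infty\in S$ by the defining property of $\sigma$, and therefore $y_\infty=f(x_\infty)\in f(S)$.

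The step I expect to be the main obstacle is checking that the shadow balls form a \emph{legal} descending sequence for Bob, i.e.\ $\w B_{\turn+1}\subset\w B_\turn$. In the spaces relevant to this paper (subsets of $\R^d$), the set inclusion $B_{\turn+1}\subset B_\turn$ is equivalent to the ``formal'' inequality $\dist(y_{\turn+1},y_\turn)+\rho_{\turn+1}\le\rho_\turn$, and the $K_4$-Lipschitz bound on $f^{-1}$ then distributes over this sum: $\dist(f^{-1}(y_{\turn+1}),f^{-1}(y_\turn))+\w\rho_{\turn+1}\le K_4\big(\dist(y_{\turn+1},y_\turn)+\rho_{\turn+1}\big)\le K_4\rho_\turn=\w\rho_\turn$, which forces $\w B_{\turn+1}\subset\w B_\turn$. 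In a general complete metric space containment of balls need not entail the formal inequality, so there one should either impose a mild regularity hypothesis on $X$ (e.g.\ that it is a length space) or, as is standard for Schmidt-type games, take the formal inequality as Bob's nesting rule; under either convention the argument above goes through unchanged.
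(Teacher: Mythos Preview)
Your argument is correct and follows essentially the same route as the paper's own proof: both set up a shadow game on $X$ by replacing Bob's ball $B(y_\turn,\rho_\turn)$ with $B(f^{-1}(y_\turn),K_4\rho_\turn)$, push Alice's deletions forward with thickness scaled by $K_3$, and verify the outcome via $f(\NN(\hyp,\rho))\subset\NN(f(\hyp),K_3\rho)$. The only difference is that the paper simply asserts that the inclusions $\w B_{\turn+1}\subset\w B_\turn$ in the $Y$-game together with the $K_4$-Lipschitz property of $f^{-1}$ yield the shadow inclusions $B_{\turn+1}\subset B_\turn$, whereas you correctly flag this as the delicate step and supply the justification via the formal nesting inequality (valid in $\R^d$ or any length space, or under the standard formal-ball convention for Schmidt-type games); this is a genuine improvement in rigor over the paper's sketch.
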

\begin{proof}
Let us call the $(\alpha,\beta,c,\rho,\HH)$-potential game Game I and the $(K_3 K_4\alpha,\beta,c,K_3\rho,f(\HH))$-potential game Game II. We need to show that Alice can transfer any winning strategy from Game I to Game II. For each move $\w B_\turn = B(x,r)$ Bob makes in Game II, Alice pretends that he has made the corresponding move $B_\turn = B(f^{-1}(x),K_4 r)$ in Game I. The inclusions $\w B_{\turn + 1} \subset \w B_\turn$ in Game II together with the fact that $f^{-1}$ is $K_4$-Lipschitz imply that the corresponding inclusions $B_{\turn + 1} \subset B_\turn$ hold in Game I, i.e. Bob is playing legally in Game I. In Game I Alice plays her winning strategy, which means that in response to each of Bob's balls $B_\turn$, she deletes some collection $\AA_\turn$. She then transfers her strategy to Game II by deleting the corresponding collection
\[
\w\AA_\turn = \{\NN(f(\hyp),K_3 r) : \NN(\hyp,r) \in \AA_\turn\}.
\]
This is legal to play since
\[
\sum_{A\in\w\AA_\turn} \thi^c(A) = \sum_{A\in\AA_\turn} (K_3 \thi(A))^c \leq (K_3 \alpha \rad(B_\turn))^c = (K_3 K_4 \alpha \rad(\w B_\turn))^c.
\]
Since $f$ is $K_3$-Lipschitz, we have $f(\bigcup[\AA_\turn]) \subset \bigcup\w\AA_\turn$, and thus if Alice wins by default in Game I, then she wins by default in Game II as well. This completes the proof.
\end{proof}


\ignore{
A map is called \emph{$(C,\theta)$-quasisymmetric} if for all $x,y,z\in X$ such that $\dist(x,z) \geq \dist(x,y)$, we have
\[
\frac{\dist(fx,fz)}{\dist(fx,fy)} \leq C\left(\frac{\dist(x,z)}{\dist(x,y)}\right)^\theta.
\]

\begin{proposition}
Suppose $X$ is geodesic. If $S$ is $(\alpha,\beta,c,\rho,\HH)$-potential winning and $f:X\to Y$ is a $(C_1,\theta_1)$-quasisymmetric homeomorphism whose inverse is $(C_2,\theta_2)$-quasisymmetric, then $f(S) \subset Y$ is $(C^2\alpha^\theta,\beta^\theta,c/\theta,?,f(\HH)$-potential winning.
\end{proposition}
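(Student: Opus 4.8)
The plan is to adapt the proof of the bi-Lipschitz invariance statement, Proposition~\ref{propositionlipinv}. Alice converts a winning strategy for the $(\alpha,\beta,c,\rho,\HH)$-potential game on $X$ with target $S$ (``Game~I'') into a winning strategy for the corresponding game on $Y$ with target $f(S)$ and collection $f(\HH)$ (``Game~II''), by interpreting each of Bob's moves in Game~II as a move in Game~I pulled back through $f^{-1}$, running her Game~I strategy against this imagined sequence, and transferring her deletions forward through $f$. The substance is that $f$ distorts distances in a scale- and location-dependent way rather than uniformly; the geodesic hypothesis on $X$ (together with the connectedness of $Y=f(X)$) is exactly what lets us keep this distortion under control, with the role of the Lipschitz constants played by a scale-dependent ``quasi-Lipschitz'' factor obeying a power law.

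\textbf{Step 1: a bounded-distortion-of-balls lemma.} I would first show that in a geodesic space, for a ball $B(x,r)$ such that $X$ contains a point $w$ with $\dist(x,w)=r$, writing $r^*:=\dist(f(x),f(w))$, one has
\[
B\big(f(x),\,r^*/C_1\big)\;\subseteq\;f\big(B(x,r)\big)\;\subseteq\;B\big(f(x),\,C_1 r^*\big),
\]
so that $f$ sends balls to sets of eccentricity $\le C_1^2$, with $r^*$ independent of $w$ up to a factor $C_1$; symmetrically for $f^{-1}$ with $C_2$ in place of $C_1$. Comparing two concentric balls and placing their boundary points on a common geodesic ray from the center gives the quasi-power scaling law $C_1^{-1/\theta_1}(r/r')^{1/\theta_1}\le r^*(B(x,r))/r^*(B(x,r'))\le C_1(r/r')^{\theta_1}$ for $r'\le r$, and a local bound $\dist(f(p),f(q))\lesssim r^*(B(x,r))\,(\dist(p,q)/r)^{\theta_1}$ for $p,q\in B(x,r)$, with constants depending only on $C_1,\theta_1$. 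The degenerate case $B(x,r)=X$ is trivial.

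\textbf{Step 2: transfer of moves and deletions.} When Bob plays $\tilde B_m=B(y_m,\tilde r_m)$ in Game~II, Alice sets $x_m=f^{-1}(y_m)$ and lets Bob's imagined ball in Game~I be $B_m:=B(x_m,s_m)$, where $s_m\asymp\rad(f^{-1}(\tilde B_m))$ is chosen just large enough that $f^{-1}(\tilde B_m)\subseteq B_m$ with slack to spare; by Step~1, $s_m$ is comparable, up to constants depending on $C_1,C_2,\theta_1,\theta_2$, to the $f^{-1}$-scale at scale $\tilde r_m$. Her Game~I strategy deletes collections $\AA_m=\{\NN(\hyp_{i,m},\rho_{i,m})\}$ with $\sum_i\rho_{i,m}^c\le(\alpha s_m)^c$; she transfers these to $\tilde\AA_m=\{\NN(f(\hyp_{i,m}),\rho_{i,m}')\}$ in Game~II. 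The key point is a localization: since only the outcome $x_\infty\in\bigcap_m B_m$ can ever matter, it suffices to choose $\rho_{i,m}'$ so that $f(\NN(\hyp_{i,m},\rho_{i,m})\cap B_m)\subseteq\NN(f(\hyp_{i,m}),\rho_{i,m}')$, and because this intersection only involves $\hyp_{i,m}$ within a bounded multiple of $B_m$, Step~1 gives $\rho_{i,m}'\lesssim(\rho_{i,m}/s_m)^{\theta_1}\tilde r_m$. Taking $c'=c/\theta_1$ turns $\sum_i\rho_{i,m}^c\le(\alpha s_m)^c$ into $\sum_i(\rho_{i,m}')^{c'}\lesssim(\alpha^{\theta_1}\tilde r_m)^{c'}$, while the scaling law forces $s_{m+1}/s_m\gtrsim(\tilde r_{m+1}/\tilde r_m)^{1/\theta_1}$; hence the imagined play is legal in Game~I provided Game~II is played with parameters $\asymp(\alpha^{\theta_1},\beta^{\theta_1},c/\theta_1)$ --- the constants, depending on $C_1,C_2,\theta_1,\theta_2$, being what the ``$C^2$'' in the statement schematically records --- and with a Game~II $\rho$-parameter large enough that the imagined first move $B_0$ necessarily has radius $\ge\rho$; this last value is exactly the ``$?$'' in the statement, and it can only be described in terms of $f$ and $\rho$, since a quasisymmetric map carries no canonical notion of scale. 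Finally, exactly as in Proposition~\ref{propositionlipinv}, one checks that $f(x_\infty)$ is the Game~II outcome, that it lies in $f(S)$ whenever Alice does not win by default, and --- via the localization --- that if it lies in some deleted set $\NN(f(\hyp_{i,m}),\rho_{i,m}')$ then $x_\infty\in\NN(\hyp_{i,m},\rho_{i,m})$, so that Alice had already won Game~I by default, a contradiction. Thus $\theta=\theta_1$ works, with $\theta_2$ entering only through the constants.

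\textbf{Main obstacle.} The part requiring real care --- and the reason this is more than a transcription of the bi-Lipschitz proof --- is maintaining the nesting $B_{m+1}\subseteq B_m$ of the pulled-back balls while keeping the ratio $s_{m+1}/s_m$ bounded below. In the bi-Lipschitz case one uses that $\tilde B_{m+1}\subseteq\tilde B_m$ forces $\dist(y_{m+1},y_m)\le\tilde r_m-\tilde r_{m+1}$ and applies the Lipschitz bound to the displacement of the centers; with only the quasi-Lipschitz power bound of Step~1 available, this displacement is controlled only up to a multiplicative factor, which would compound over successive turns unless one builds slack into the imagined balls by taking each $B_m$ somewhat larger than the literal preimage $f^{-1}(\tilde B_m)$ and carefully tracks the resulting loss of constants. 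Getting this bookkeeping right --- and thereby pinning down the precise Game~II $\rho$-parameter --- is the crux of the argument.
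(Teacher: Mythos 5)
Your transfer scheme --- pull each of Bob's Game~II balls back through $f^{-1}$ to an imagined ball in $X$, run the Game~I strategy against the imagined play, and push Alice's deletions forward through $f$, with a geodesic bounded-distortion-of-balls lemma standing in for the Lipschitz constants --- is exactly the route the paper itself takes: this proposition sits in a commented-out block of the source, its statement still contains an undetermined ``$\theta$'' and a ``$?$'' in the $\rho$ slot, and the accompanying draft proof is a near-verbatim copy of the bi-Lipschitz argument carrying the internal annotation ``How to verify $B_{m+1}\subset B_m$?''. You have therefore reproduced the authors' intended argument and correctly diagnosed the precise point at which it is incomplete.

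But your proposal does not close that point either, and the repair you sketch --- enlarging each imagined ball by a fixed multiplicative slack $\lambda$ over the literal preimage --- fails as stated. Let $s_m$ denote the minimal radius of a ball about $f^{-1}(y_m)$ containing $f^{-1}(\widetilde B_m)$. Nesting of the preimages gives $\dist(f^{-1}(y_{m+1}),f^{-1}(y_m))\le s_m$ and, by the triangle inequality, only $s_{m+1}\le 2s_m$; quasisymmetry genuinely permits $s_{m+1}>s_m$ by a bounded factor when $\tilde r_{m+1}$ is close to $\tilde r_m$ but the center moves. Then the required inclusion
\[
B\big(f^{-1}(y_{m+1}),\lambda s_{m+1}\big)\subseteq B\big(f^{-1}(y_m),\lambda s_m\big)
\]
would force $\dist(f^{-1}(y_{m+1}),f^{-1}(y_m))+\lambda s_{m+1}\le\lambda s_m$, which no fixed $\lambda$ can guarantee: the additive center error is comparable to $s_m$ itself rather than to $s_m-s_{m+1}$. (The bi-Lipschitz proof survives only because the same constant multiplies both the radius and a center displacement bounded by $\tilde r_m-\tilde r_{m+1}$, so the two errors cancel exactly.) The missing idea is to decouple the clocks of the two games: Alice should update her imagined Game~I position only at turns where Bob's Game~II radius has dropped by a definite factor since the last update, so that between updates all of Bob's balls remain inside a single imagined ball and at each update the genuine decrease of scale absorbs the bounded multiplicative ambiguity in $s_m$. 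This costs only bounded factors in $\alpha$ and a bounded power of $\beta$, consistent with the schematic constants in the statement, but it --- together with pinning down $\theta$ in terms of $\theta_1,\theta_2$ and the value of the ``$?$'' parameter --- is precisely the bookkeeping your write-up defers to ``the crux of the argument'' without carrying out.
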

\begin{proof}
Let us call the $(\alpha,\beta,c,\rho,\HH)$-potential game Game I and the $(C^2\alpha^\theta,\beta^\theta,c/\theta,?,f(\HH)$-potential game Game II. We need to show that Alice can transfer any winning strategy from Game I to Game II. For each move $\w B_\turn = B(x,r)$ Bob makes in Game II, Alice pretends that he has made the corresponding move $B_\turn = B(f^{-1}(x),r')$ in Game I, where $r'$ is chosen so that $f(B_\turn) \supset \w B_\turn$, and is minimal subject to this condition. [How to verify $B_{\turn+1} \subset B_\turn$?\internal] The inclusions $\w B_{\turn + 1} \subset \w B_\turn$ in Game II together with the fact that $f^{-1}$ is $K_4$-Lipschitz imply that the corresponding inclusions $B_{\turn + 1} \subset B_\turn$ hold in Game I, i.e. Bob is playing legally in Game I. In Game I Alice plays her winning strategy, which means that in response to each of Bob's balls $B_\turn$, she deletes some collection $\AA_\turn$. She then transfers her strategy to Game II by deleting the corresponding collection
\[
\w\AA_\turn = \{\NN(f(\hyp),K_3 r) : \NN(\hyp,r) \in \AA_\turn\}.
\]
This is legal to play since
\[
\sum_{A\in\w\AA_\turn} \thi^c(A) = \sum_{A\in\AA_\turn} (K_3 \thi(A))^c \leq (K_3 \alpha \rad(B_\turn))^c = (K_3 K_4 \alpha \rad(\w B_\turn))^c.
\]
Since $f$ is $K_3$-Lipschitz, we have $f(\bigcup[\AA_\turn]) \subset \bigcup\w\AA_\turn$, and thus if Alice wins by default in Game I, then she wins by default in Game II as well. This completes the proof.
\end{proof}

}

\begin{proposition}[$\rho$-comparison lemma]
\label{propositionrhocomparison}
If $S$ is $(\alpha,\beta,c,\rho,\HH)$-potential winning and $\lambda \geq 1$ then $S$ is $(\lambda\alpha,\beta,c,\lambda^{-1}\rho,\HH)$-potential winning.
\end{proposition}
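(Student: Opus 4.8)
The plan is to let Alice win the $(\lambda\alpha,\beta,c,\lambda^{-1}\rho,\HH)$-potential game (the \emph{new game}) by ``zooming out by a factor of $\lambda$'' and stealing her winning strategy from the $(\alpha,\beta,c,\rho,\HH)$-potential game (the \emph{old game}). Concretely, suppose Bob plays balls $B_\turn = B(x_\turn,\rho_\turn)$ in the new game. Alice pretends that in the old game Bob has played the \emph{dilated} balls $B_\turn' = B(x_\turn,\lambda\rho_\turn)$ — same centers, radii scaled up by $\lambda\geq 1$ — and she responds in the new game by deleting exactly the collection $\AA_\turn$ that her old-game winning strategy prescribes in response to $B_0',\ldots,B_\turn'$.

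First I would check that Bob's dilated moves form a legal play of the old game. The initial constraint \eqref{bobrules1} holds since $\rad(B_0') = \lambda\rho_0 \geq \lambda\cdot\lambda^{-1}\rho = \rho$; the radius-contraction half of \eqref{bobrules} holds since $\rad(B_{\turn+1}') = \lambda\rho_{\turn+1} \geq \lambda\beta\rho_\turn = \beta\,\rad(B_\turn')$; and the nesting $B_{\turn+1}'\subset B_\turn'$ follows from $B_{\turn+1}\subset B_\turn$ because dilating the radii of a nested pair of balls about fixed centers by a common factor $\lambda\geq 1$ preserves the inclusion — in $\R^d$ this is the elementary fact that $B(x_1,\rho_1)\subset B(x_0,\rho_0)$ is equivalent to $\dist(x_0,x_1)\leq\rho_0-\rho_1$, which is monotone under scaling both radii.

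Next I would check that Alice's responses are legal in the new game. Her old-game strategy guarantees $\sum_i \thi(\NN(\hyp_{i,\turn},\rho_{i,\turn}))^c = \sum_i \rho_{i,\turn}^c \leq (\alpha\,\rad(B_\turn'))^c = (\alpha\lambda\rho_\turn)^c = (\lambda\alpha\cdot\rho_\turn)^c$, which is precisely the constraint \eqref{alicerules} for the new game with parameter $\lambda\alpha$; the $c = 0$ case is identical, a single deleted set of thickness $\leq \alpha\lambda\rho_\turn = \lambda\alpha\,\rho_\turn$. So Alice deletes the literally same sets, at the same total cost. Finally, to see that this wins: if Alice does not win by default in the new game, then $\rho_\turn\to 0$, hence $\lambda\rho_\turn\to 0$, so the dilated balls also shrink to a point, and since $x_\infty\in B_\turn\subset B_\turn'$ for all $\turn$ the outcome of the imagined old game equals $x_\infty$; moreover she did not win by default in the old game either, since the deleted sets there coincide with those in the new game, so if $x_\infty$ lay in one of them she would also have won by default in the new game. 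Therefore her old-game strategy forces $x_\infty\in S$, as required.

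The only point needing care — and hence the ``main obstacle,'' though it is a minor one — is the claim that a common dilation of radii preserves nested-ball inclusion; this is automatic in $\R^d$ and more generally in geodesic spaces, which covers every setting in which the proposition is applied. Everything else is bookkeeping, and in particular no appeal to Proposition \ref{propositioninvariance2} or the H\"older-based monotonicity is needed.
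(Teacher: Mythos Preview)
Your approach is exactly the paper's: dilate Bob's balls $B_\turn=B(x_\turn,\rho_\turn)$ to $B(x_\turn,\lambda\rho_\turn)$, feed the dilated sequence to Alice's old-game strategy, and replay her deletions verbatim in the new game. The bookkeeping for \eqref{bobrules1}, the radius inequality in \eqref{bobrules}, and \eqref{alicerules} matches the paper's line for line; your treatment of the outcome and the default-win conditions is slightly more detailed but equivalent. Your flagging of the nesting step --- that $B_{\turn+1}\subset B_\turn$ must imply $B(x_{\turn+1},\lambda\rho_{\turn+1})\subset B(x_\turn,\lambda\rho_\turn)$ --- is a point the paper simply asserts, so your caveat restricting to $\R^d$ (where it is immediate from $\dist(x_\turn,x_{\turn+1})\leq\rho_\turn-\rho_{\turn+1}$) is, if anything, more honest than the original.
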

\begin{proof}
Let us call the $(\alpha,\beta,c,\rho,\HH)$-potential game Game I and the $(\lambda\alpha,\beta,c,\lambda^{-1}\rho,\HH)$-potential game Game II. We need to show that Alice can transfer any winning strategy from Game I to Game II. She transfers her strategy as follows: every time Bob makes a move $B_\turn = B(x,\rho_\turn)$ in Game II, she pretends that Bob has made the move $\w B_\turn = B(x,\lambda \rho_\turn)$ in Game I. Since $\lambda \geq 1$, the inclusions $B_{\turn+1} \subset B_\turn$ in Game II imply that the inclusions $\w B_{\turn + 1} \subset \w B_\turn$ hold for Game I, i.e. Bob is playing legally in Game I. In Game I Alice plays her winning strategy, which means that in response to each of Bob's balls $\w B_\turn$, she deletes some collection $\AA_\turn$. She then transfers her strategy by deleting the exact same collection in response to the corresponding move $B_\turn$ in Game II. This is legal because $(\lambda\alpha)\rho_\turn = \alpha (\lambda \rho_\turn)$.
\end{proof}

\begin{proposition}[$\beta$-comparison lemma]
\label{propositionbetacomparison}
Suppose that $X$ is geodesic. If $S$ is $(\alpha,\beta,c,\rho,\HH)$-potential winning and $m\in\N$ then $S$ is $((1+\beta^{-c}+\cdots+\beta^{-(m-1)c})^{1/c}\alpha,\beta^m,c,\beta^{m-1}\rho,\HH)$-potential winning. [$m\neq \turn$\internal]
\end{proposition}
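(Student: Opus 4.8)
The plan is to have Alice simulate an auxiliary copy of the $(\alpha,\beta,c,\rho,\HH)$-potential game (``Game~I'', in which she holds a winning strategy $\sigma$) while she plays the $\bigl((1+\beta^{-c}+\cdots+\beta^{-(m-1)c})^{1/c}\alpha,\beta^m,c,\beta^{m-1}\rho,\HH\bigr)$-potential game (``Game~II''). The governing idea is that one ``$\beta^m$-step'' of Game~II should be unfolded into $m$ consecutive ``$\beta$-steps'' of Game~I. Given Bob's play $B_0\supseteq B_1\supseteq\cdots$ in Game~II (so $\rad(B_0)\ge\beta^{m-1}\rho$ and $\rad(B_{n+1})\ge\beta^m\rad(B_n)$), Alice builds a virtual Game~I play by (i) prepending $m-1$ ``warm-up'' balls concentric with $B_0$ whose radii decrease geometrically from $\max(\rho,\rad B_0)\ge\rho$ down to $\rad(B_0)$, and (ii) inserting, between each $B_n$ and $B_{n+1}$, $m-1$ balls whose radii decrease geometrically from $\rad(B_n)$ to $\rad(B_{n+1})$. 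Each successive radius ratio in this virtual play is then at least $\beta$ — for the warm-up this uses exactly $\rad(B_0)\ge\beta^{m-1}\rho$, for the inserted balls it uses $\rad(B_{n+1})\ge\beta^m\rad(B_n)$ — and because $X$ is geodesic Alice can place the centres of the inserted balls along geodesics joining the relevant centres so that the whole virtual sequence $\w B_0\supseteq\w B_1\supseteq\cdots$ is genuinely nested. By construction it is a legal Game~I play, and Bob's actual balls appear in it as a subsequence.

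Next I would run $\sigma$ against this virtual play: it prescribes at virtual turn $j$ a collection $\AA_j$ with $\sum_{A\in\AA_j}\thi(A)^c\le(\alpha\,\w\rho_j)^c$, where $\w\rho_j=\rad(\w B_j)$. Partition the virtual turns into consecutive length-$m$ blocks so that the block ``attached to $B_n$'' consists of the $m$ virtual turns carrying out the transition from $B_{n-1}$ to $B_n$ (the last of which is $B_n$ itself); the warm-up block plays the role attached to $B_0$. Alice's Game~II strategy is simply: on her $n$-th turn, delete the union of the $m$ collections $\AA_j$ from the block attached to $B_n$. The point is that the virtual radii in that block are $\rad(B_{n-1})^{1-i/m}\rad(B_n)^{i/m}\le\beta^{-(m-i)}\rad(B_n)$ for $i=1,\dots,m$ (using $\rad(B_{n-1})\le\beta^{-m}\rad(B_n)$), so the total potential she spends on that turn is at most
\[
\sum_{i=1}^m\bigl(\alpha\beta^{-(m-i)}\rad(B_n)\bigr)^c=(\alpha\rad B_n)^c\sum_{\ell=0}^{m-1}\beta^{-\ell c}=\Bigl((1+\beta^{-c}+\cdots+\beta^{-(m-1)c})^{1/c}\alpha\cdot\rad(B_n)\Bigr)^c,
\]
which is exactly her Game~II budget. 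The same estimate handles the warm-up block, with $\rad(B_0)$ in place of $\rad(B_n)$; in the degenerate case $\rad(B_0)\ge\rho$, where the warm-up radii are all equal to $\rad(B_0)$, one instead uses $m\le 1+\beta^{-c}+\cdots+\beta^{-(m-1)c}$. It is the geometric (rather than uniform) choice of intermediate radii that makes this cost telescope into precisely the stated constant.

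Finally I would check the outcome bookkeeping. If Bob's Game~II radii do not tend to $0$, Alice wins Game~II by default. Otherwise they do, hence the virtual radii do as well, and since every virtual ball contains one of Bob's actual balls $B_n$, the virtual outcome coincides with the Game~II outcome $x_\infty$. Every set Alice deletes in Game~II is a set deleted by $\sigma$ in the virtual game, so $\sigma$'s guarantee transfers: either $\sigma$ wins by default in the virtual game, i.e. $x_\infty$ lies in one of the virtually deleted sets — in which case Alice also wins Game~II by default — or else $x_\infty$, being the virtual outcome, lies in $S$. Either way Alice wins, so $S$ is $\bigl((1+\beta^{-c}+\cdots+\beta^{-(m-1)c})^{1/c}\alpha,\beta^m,c,\beta^{m-1}\rho,\HH\bigr)$-potential winning, as desired.

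The step I expect to be the main obstacle is the very first one: one must verify that in a geodesic space a nested chain of balls with arbitrary prescribed intermediate radii really can be inserted between two given nested balls (this is where, and essentially the only place where, the hypothesis that $X$ is geodesic is used), and one must set up the block indexing carefully — in particular being attentive to the warm-up block and to the off-by-one involved in attaching a block to the pair $(B_{n-1},B_n)$ rather than to $B_n$ alone. Everything after that is routine.
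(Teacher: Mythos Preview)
Your approach is essentially the same as the paper's: unfold each $\beta^m$-step of Game~II into $m$ consecutive $\beta$-steps of a virtual Game~I, interpolating nested balls along geodesics, and on turn $n$ of Game~II have Alice play the union of $\sigma$'s responses to the $m$ virtual turns in the block ending at $B_n$. The paper's proof is a brief sketch (it just asserts that the interpolation is possible by geodesicity and that ``it can be checked'' the budget bound works), whereas you supply the explicit geometric interpolation $\rad(B_{n-1})^{1-i/m}\rad(B_n)^{i/m}$, the resulting telescoping sum $\sum_{\ell=0}^{m-1}\beta^{-\ell c}$, and the warm-up edge case $\rad(B_0)\ge\rho$; these details are correct and fill in exactly what the paper omits.
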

\begin{proof}
The proof is similar to the proof of the previous proposition. Let us call the $(\alpha,\beta,c,\rho,\HH)$-potential game Game I and the $((1+\beta^{-c}+\cdots+\beta^{-(m-1)c})^{1/c}\alpha,\beta^m,c,\rho,\HH)$-potential game Game II. Alice can transfer a winning strategy from Game I to Game II as follows:  every time Bob makes a move $B_\turn = B(x,\rho_\turn)$ in Game II, she pretends that Bob has made a sequence of moves $\w B_{mn},\ldots,\w B_{mn+m-1}$ satisfying
\[
B_{\turn-1} = \w B_{mn-1} \supset \w B_{mn} \supset \cdots \supset \w B_{mn+m-1} = B_\turn
\]
as well as the inequalities $\rad(\w B_{k+1}) \geq \beta\,\rad(\w B_k)$. Such a choice is possible due to the fact that $X$ is geodesic, as well as to the inequality $\rad(B_\turn) \geq \beta^m \,\rad(B_{\turn-1})$ and the inclusion $B_\turn \subset B_{\turn-1}$. If $n = 0$, we omit the first inclusion and radial inequality but instead require that $\rad(\w B_0) \geq \rho$. Finally, Alice uses her winning strategy for Game I against the moves $\w B_{mn-m+1},\ldots,\w B_{mn}$, and combines the resulting collections to form a single collection which she plays in response to $B_\turn$. It can be checked that the choice of $\w\alpha = (1+\beta^{-c}+\cdots+\beta^{-(m-1)c})^{1/c}\alpha$ means that this strategy is legal.
\end{proof}

}

\section{Hausdorff dimension of potential winning sets}
\label{sectionpotentialHD}

In this section we will address the question: what is the appropriate analogue of Lemma \ref{lemmanonempty} for the potential game? In other words, what quantitative information about Hausdorff dimension can be deduced from the assumption that a certain set is potential winning? To answer this question, we first need some definitions.

\begin{definition}
Given $\delta > 0$, a measure $\mu$ on a complete metric space $X$ is said to be \emph{Ahlfors $\delta$-regular} if for every sufficiently small ball $B(x,\rho)$ centered in the topological support of $\mu$, we have $\mu(B(x,\rho)) \asymp \rho^\delta$. The topological support of an Ahlfors $\delta$-regular measure is also said to be Ahlfors $\delta$-regular.

Given $\eta > 0$ and a collection of closed sets $\HH$ in $X$, the measure $\mu$ is called \emph{absolutely $(\eta,\HH)$-decaying} if for every sufficiently small ball $B(x,\rho)$ centered in the topological support of $\mu$, for every $\hyp\in \HH$, and for every $\epsilon > 0$, we have
\[
\mu(B(x,\rho)\cap \NN(\hyp,\epsilon\rho)) \lesssim \epsilon^\eta \mu(B(x,\rho)).
\]
When $X = \R^d$ and $\HH$ is the collection of hyperplanes, then $\mu$ is called \emph{absolutely $\eta$-decaying}.

Finally, the \emph{Ahlfors dimension} of a (not necessarily closed) set $S \subset \R$ is the supremum of $\delta$ such that $S$ contains a closed Ahlfors $\delta$-regular subset. We will denote it by $\AD(S)$. The Ahlfors dimension of a set is a lower bound for its Hausdorff dimension.
\end{definition}

\begin{example}
\label{exampleabspoints}
Every Ahlfors $\delta$-regular measure is absolutely $(\delta,\PP)$-decaying, where $\PP$ is the collection of singletons in $X$.
\end{example}

\begin{example}
\label{example1decay}
Lebesgue measure on $\R^d$ is absolutely $1$-decaying.
\end{example}

The following theorem is a combination of known results:
\begin{theorem}
\label{theoremknown}
Let $X = \R^d$, let $\HH$ be the collection of hyperplanes, and let $J$ be the support of an Ahlfors $\delta$-regular and absolutely $(\eta,\HH)$-decaying measure $\mu$. Suppose that $S \subset X$ is $(\alpha,\beta,c,\rho,\HH)$-potential winning for all $\alpha,\beta,c,\rho > 0$. Then we have $\AD(S\cap J) = \delta$.
\end{theorem}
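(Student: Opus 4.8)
The plan is to first dispose of the easy inequality and then reduce the statement to a known fact about the hyperplane absolute game. The bound $\AD(S\cap J)\le\delta$ is immediate: $S\cap J\subset J$, the Ahlfors dimension is monotone under inclusion, and $\AD(J)=\delta$ because a closed Ahlfors $\delta'$-regular subset of $J$ with $\delta'>\delta$ would have Hausdorff dimension $\delta'>\delta=\HD(J)$. So everything reduces to proving $\AD(S\cap J)\ge\delta$.

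The first genuine step is to translate the potential-game hypothesis into absolute-game language. Since $\HH$ is the collection of hyperplanes and $S$ is $(\alpha,\beta,c,\rho,\HH)$-potential winning for all parameters, the equivalence between the hyperplane potential game and the hyperplane absolute game established in \cite[Appendix C]{FSU4} (together with elementary monotonicity and comparison arguments in the parameters $\rho,\beta,c$) shows that $S$ is hyperplane absolute winning in the standard sense. Thus it suffices to prove: whenever $S\subset\R^d$ is hyperplane absolute winning and $J$ is the support of an Ahlfors $\delta$-regular, absolutely $(\eta,\HH)$-decaying measure $\mu$, one has $\AD(S\cap J)\ge\delta$.

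Next I would record that $J$ is \emph{hyperplane diffuse}: from $\mu(B(x,\rho))\asymp\rho^\delta$ together with $\mu(B(x,\rho)\cap\NN(L,\epsilon\rho))\lesssim\epsilon^\eta\mu(B(x,\rho))$ one gets a fixed small $\beta_0>0$ such that for every small ball $B(x,\rho)$ centered on $J$ and every hyperplane $L$ one has $\mu(B(x,\rho)\cap\NN(L,\beta_0\rho))<\mu(B(x,\rho))$, hence $J\cap B(x,\rho)\not\subset\NN(L,\beta_0\rho)$. The conclusion $\AD(S\cap J)=\delta$ for $S$ hyperplane absolute winning and $J$ an Ahlfors regular hyperplane diffuse set is then part of the established theory of the hyperplane absolute game on fractals; I would cite this (cf.\ \cite{FSU4,Fishman} and the discussion following Lemma \ref{lemmanonempty}), which is precisely why the theorem is phrased as a combination of known results.

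For the record, here is the mechanism behind that last input, which is the only place requiring real work. Fix $\delta'<\delta$. Using Ahlfors regularity in both directions, every ball $B$ of radius $r$ centered on $J$ contains $\gtrsim\beta^{-\delta}$ pairwise disjoint subballs of radius $\beta r$ centered on $J$, separated by distances comparable to $r$. Bob plays the hyperplane absolute game inside $J$ along the tree formed by these families; when Alice deletes a neighborhood $\NN(L,\rho')$ with $\rho'\le\alpha r$, absolute decay at scale $r$ shows that at most $\lesssim\alpha^\eta\beta^{-\delta}$ of Bob's subballs can meet it (each such subball lies in $\NN(L,2\alpha r)$ and carries $\mu$-mass $\asymp(\beta r)^\delta$, while $\mu(B\cap\NN(L,2\alpha r))\lesssim\alpha^\eta r^\delta$), so for $\alpha$ small enough (depending only on the implied constants) Bob retains $\ge\beta^{-\delta'}$ legal subballs once $\beta$ is small enough that $\beta^{-(\delta-\delta')}$ beats a fixed constant. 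Having Bob keep exactly $\lceil\beta^{-\delta'}\rceil$ children at each node, the separation property makes the set of outcomes of this adaptively built tree a closed subset of $S\cap J$ carrying an Ahlfors $\delta'$-regular measure; hence $\AD(S\cap J)\ge\delta'$, and letting $\delta'\uparrow\delta$ finishes. The main obstacle is exactly this bookkeeping: choosing $\delta'$, then $\alpha$, then $\beta$ (and tracing the induced $c,\rho$ back through the potential/absolute equivalence) so that all constraints hold simultaneously, and verifying that absolute decay bounds the number of destroyed subballs uniformly across all scales.
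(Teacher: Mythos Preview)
Your proposal is correct and follows essentially the same route as the paper: convert the potential-winning hypothesis into hyperplane absolute winning via \cite[Appendix C]{FSU4}, then invoke the established theory of HAW sets on Ahlfors regular, absolutely decaying fractals. The paper's proof is terser, citing \cite[Theorem C.8]{FSU4} for the conversion, \cite[Propositions 4.7 and 5.1]{BFKRW} for the passage to Schmidt winning on $J$, and \cite[Proposition D.1]{FSU4} for the Ahlfors dimension conclusion; your version supplies the hyperplane-diffuseness verification and a sketch of the underlying counting argument, which is helpful but not a different strategy.
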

\begin{proof}
The set $S$ is $\HH$-potential winning in the terminology of \cite[Appendix C]{FSU4}, and thus by \cite[Theorem C.8]{FSU4} it is also $\HH$-absolute winning, or in other words hyperplane absolute winning. So by \cite[Propositions 4.7 and 5.1]{BFKRW} $S$ is winning on $J$, and thus by \cite[Proposition D.1]{FSU4} we have $\AD(S\cap J) = \delta$.
\end{proof}

In this paper we will be interested in the following quantitative version of Theorem \ref{theoremknown}:

\begin{theorem}
\label{theorempotentialHD}
Let $X$ be a complete metric space, $\HH$ a collection of closed subsets of $X$, and $J\subset X$ be the topological support of an Ahlfors $\delta$-regular and absolutely $(\eta,\HH)$-decaying measure $\mu$. Let $S\subset X$ be $(\alpha,\beta,c,\rho,\HH)$-potential winning, with $c < \eta$ and $\beta \leq 1/4$. Then for every ball $B_0\subset X$ centered in $J$ with $\rad(B_0) \geq \rho$, we have
\begin{equation}
\label{potentialHD}
\AD(S\cap J\cap B_0) \geq \delta - K_1 \frac{\alpha^\eta}{|\log(\beta)|} > 0 \;\;\;\text{ if }\;\;\; \alpha^c \leq \frac{1}{K_2}(1-\beta^{\eta-c})
\end{equation}
where $K_1,K_2$ are large constants independent of $\alpha,\beta,c,\rho$ (but possibly depending on $X,J,\HH$).
\end{theorem}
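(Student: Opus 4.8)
The plan is to imitate the proof of Lemma~\ref{lemmanonempty}: build a branching Cantor subset of $S\cap J\cap B_0$ using a carefully chosen family of strategies for Bob, then estimate its Ahlfors dimension from below. The key difference is that in the potential game Alice does not delete Bob's moves immediately; she only forces the outcome to avoid the deleted thickenings. So instead of pruning Bob's tree turn by turn, I would run Bob's strategies to the end (producing a full self-similar-type Cantor set $\CC$ built from $N\asymp \beta^{-\delta}$ children per node, each of radius $\beta\,\rad(B)$, well-separated), and then estimate the $\mu$-measure of the bad set $\bigcup_\turn \bigcup_i \NN(\hyp_{i,\turn},\rho_{i,\turn})$ that Alice deletes over the course of a game. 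The absolute decay hypothesis is exactly what controls this: a thickening of thickness $\rho_{i,\turn}$ of a set in $\HH$, intersected with a ball of radius $\rad(B_\turn)$, has relative $\mu$-measure $\lesssim (\rho_{i,\turn}/\rad(B_\turn))^\eta$, and summing against Alice's budget constraint $\sum_i \rho_{i,\turn}^c \le (\alpha\rho_\turn)^c$ via the inequality $\sum a_i^\eta \le (\sum a_i^c)^{\eta/c}$ (valid since $c<\eta$) gives a total deleted measure $\lesssim \alpha^\eta\,\mu(B_\turn)$ per turn.

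The second step is to turn this measure estimate into a dimension bound on the surviving set. On the branching Cantor set $\CC$, put the natural measure $\nu$ giving equal mass to each of the $N$ children. The set of outcomes that Alice deletes is contained in $\bigcup_\turn\bigcup_{B_\turn \text{ a node}} (\text{bad part of } B_\turn)$; a union bound over the $N^\turn$ nodes at depth $\turn$ together with the per-node estimate $\nu(\text{bad part of } B_\turn) \lesssim \alpha^\eta\,N^{-\turn}\cdot(\text{geometric-type factor})$ shows that the total $\nu$-mass removed is $\lesssim \sum_\turn N^\turn \cdot \alpha^\eta N^{-\turn} \cdot \beta^{?}$, which converges and is $<1$ once $\alpha^c \le \frac1{K_2}(1-\beta^{\eta-c})$ — this is precisely where that hypothesis enters, making the geometric series in the $\eta$-versus-$c$ exponents converge and keeping the total below $1$. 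So a positive-$\nu$-measure subset $\CC'\subset\CC$ consists of legitimate outcomes, hence lies in $S\cap J\cap B_0$. Then $\CC'$ supports a measure comparable to an Ahlfors $\delta'$-regular one with $\delta' = \frac{\log N}{|\log\beta|} = \delta - O(1/|\log\beta|)$ after accounting for the removed mass; a standard mass-distribution / regularization argument (as in \cite{Beardon4} for the lower dimension bound, combined with the fact that removing a small-measure piece of an Ahlfors regular set only lowers the dimension by a controlled amount) yields $\AD(S\cap J\cap B_0) \ge \delta - K_1\alpha^\eta/|\log\beta|$.

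There is one subtlety I would be careful about: to even run Bob's branching strategy I need each ball in $J$ to contain $N > $ (whatever Alice can obstruct) disjoint subballs of radius $\beta\,\rad(B)$ with adequate separation, centered in $J$; this follows from Ahlfors $\delta$-regularity with $N\asymp \beta^{-\delta}$ once $\beta\le 1/4$ (the $1/4$ giving room for both the shrinkage and the separation). Unlike the absolute game, Bob's moves here are never actually forbidden, so I do not need the separation to protect Bob's legality — only to guarantee the children are genuinely disjoint so that $\CC$ is a true Cantor set and $\nu$ is well-defined.

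The main obstacle I anticipate is the bookkeeping in the second step: Alice may delete thickenings of wildly varying sizes at each turn, including ones much larger than the current ball $B_\turn$ (which then potentially kill an entire subtree) or much smaller (which intersect only deep descendants). I need a clean way to charge each deleted thickening $\NN(\hyp_{i,\turn},\rho_{i,\turn})$ against the nodes of $\CC$ it actually meets — essentially matching $\rho_{i,\turn}$ to the generation of $\CC$ at scale $\rho_{i,\turn}$ — and to sum the resulting losses across all turns and all nodes without the geometric series blowing up. Getting the constants $K_1,K_2$ to come out right, and confirming the series converges exactly under the stated hypothesis $\alpha^c\le\frac1{K_2}(1-\beta^{\eta-c})$ rather than some stronger condition, is the delicate part; everything else is a routine adaptation of the Lemma~\ref{lemmanonempty} argument.
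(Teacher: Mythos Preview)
Your measure-theoretic union bound does not converge, and this is not a bookkeeping detail but the heart of the matter. With the per-node estimate you correctly derive,
\[
\nu\big(B_\turn \cap \textstyle\bigcup_i \NN(\hyp_{i,\turn},\rho_{i,\turn})\big) \;\lesssim\; \alpha^\eta\,\nu(B_\turn),
\]
summing over all $N^\turn$ nodes at depth $\turn$ gives a contribution $\lesssim \alpha^\eta$ \emph{per level}, and summing over $\turn$ yields $\sum_\turn \alpha^\eta = \infty$. There is no ``geometric-type factor'' $\beta^{?}$: Alice is free to spend her full budget on a single thickening of radius $\alpha\rho_\turn$ at every turn, and the hypothesis $\alpha^c \le K_2^{-1}(1-\beta^{\eta-c})$ does nothing to produce decay in $\turn$. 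Relatedly, the assertion that ``removing a small-measure piece of an Ahlfors regular set only lowers the dimension by a controlled amount'' is false as stated: the surviving set $\CC'$ need not be closed, let alone Ahlfors regular, so positive $\nu$-mass on $\CC'$ gives no Ahlfors-dimension lower bound.

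The paper handles both problems by replacing the measure argument with a \emph{pruning} argument driven by a potential function. For a ball $B$ at level $jN$ (one works in blocks of $N\asymp \alpha^{-\eta}$ levels), set
\[
\phi_j(B)\;=\;\sum_{\index < jN}\;\sum_{A\in\AA_\index^*(B)} \thi^c(A),
\]
the accumulated $c$-mass of all earlier obstacles that still touch $B$, and keep only those $B$ with $\phi_j(B)\le(\gamma\rho_{jN})^c$. The core claim is that every surviving $B$ has $\gtrsim \beta^{-N\delta}$ surviving block-children; this is proved by splitting obstacles into old/new (before level $jN$ versus during the block) and, within each, into big/small relative to $\rho_{(j+1)N}$. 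The ``small'' new obstacles produce the geometric series $\sum_k \beta^{k(\eta-c)}$ whose convergence is exactly what the hypothesis $\alpha^c\le K_2^{-1}(1-\beta^{\eta-c})$ controls, while the ``big'' new obstacles contribute $N\alpha^\eta\asymp\epsilon$ thanks to the choice of block length $N$; the old obstacles are handled by the inductive bound $\phi_j(B)\le(\gamma\rho_{jN})^c$. The resulting pruned tree is genuinely Ahlfors regular of dimension $\log M/|\log\beta^N|\ge \delta - O(\alpha^\eta/|\log\beta|)$, and the potential condition $\phi_j\to 0$ forces every surviving branch to avoid all deleted thickenings. Your proposal is missing both the potential function (which is what lets one carry information about old obstacles forward without the sum diverging) and the block length $N\asymp\alpha^{-\eta}$ (which is what converts the $O(1/|\log\beta|)$ loss of a single-level construction into $O(\alpha^\eta/|\log\beta|)$).
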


\begin{remark}
Theorem \ref{theorempotentialHD} implies that Theorem \ref{theoremknown} is true for every complete metric space $X$ and every collection $\HH$ of closed subsets of $X$. In particular, the condition \cite[Assumption C.6]{FSU4}, which is crucial for establishing \cite[Theorem C.8]{FSU4}, turns out not to be necessary for proving its consequences in terms of Hausdorff dimension.
\end{remark}

\begin{proof}[Proof of Theorem \ref{theorempotentialHD}]
For each $\index\geq 0$ let $\rho_\index = \beta^\index \rho$, let $\sep_\index \subset J$ be a maximal $\rho_\index/2$-separated subset, and let
\[
\EE_\index = \{B(x,\rho_\index) : x\in \sep_\index\}.
\]
Let $\pi_\index:\EE_{\index + 1} \to \EE_\index$ be a map such that for all $B\in \EE_{\index + 1}$, we have
\begin{equation}
\label{pindef}
B \subset \pi_\index(B).
\end{equation}
Such a map exists since $\beta \leq 1/2$. (We will later impose a further restriction on the map $\pi_\index$.) When $m < n$ and $B\in \EE_n$, we will abuse notation slightly by writing $\pi_m(B) = \pi_m\circ\pi_{m+1}\circ\cdots\circ \pi_{n-1}(B)$.

For each $B\in \EE_\index$, consider the sequence of moves in the $(\alpha,\beta,c,\rho,\HH)$-potential game where for each $\turn = 0,\ldots,\index$, on the $\turn$th turn Bob plays the move $\pi_\turn(B)$, and Alice responds according to her winning strategy. By \eqref{pindef}, Bob's moves are all legal. Let $\AA(B)$ denote Alice's response on turn $\index$ according to her winning strategy. Also, let $\AA_\turn^*(B) = \{A\in \AA(\pi_\turn(B)) : B\cap A \neq \emptyset\}$.

\ignore{
Finally, let
\[
\AA_\index = \bigcup_{B\in B_\index} \{B\cap A : A\in \AA(B)\}.
\]
For each $A\in \AA_\index$, we let $\thi(A)$ denote the thickness of $A$, i.e. $\thi(B(x,\ar)\cap \thickvar{A}{\ar'}) = \ar'$.
\begin{claim}
For all $x\in X\butnot \bigcup_{\index\in\N} \bigcup[\AA_\index]$, we have $x\in S$.
\end{claim}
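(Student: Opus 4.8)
The plan is to exhibit each $x$ as in the claim as the outcome of a single play of the $(\alpha,\beta,c,\rho,\HH)$-potential game in which Bob plays a $\pi_\index$-nested chain of balls drawn from the collections $\EE_\index$ and passing through $x$, while Alice responds according to her winning strategy. Given this, the definition of $(\alpha,\beta,c,\rho,\HH)$-potential winning will force $x\in S$ as soon as we verify that Alice does not win by default on this play, and the hypothesis $x\notin\bigcup_\index\bigcup[\AA_\index]$ is precisely what is needed to verify it. Note that for the construction $x$ must lie in a level-$\index$ ball for every $\index$, i.e.\ in $\bigcap_\index\bigcup[\EE_\index]$ (the inner union ranging over the balls of $\EE_\index$); by maximality of $\sep_\index$ together with closedness of $J$ this intersection is exactly $J$, and these are the only $x$ relevant to the proof of Theorem~\ref{theorempotentialHD}, so we henceforth take $x\in J$.

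First I would produce the chain. Set $\EE_\index^x\df\{B\in\EE_\index:x\in B\}$; this is nonempty (since $x\in\bigcup[\EE_\index]$) and finite (the centres lie in $J$ within distance $\rho_\index$ of $x$ and are $\rho_\index/2$-separated, and $J$ is doubling, being Ahlfors regular). Since $B\subset\pi_\index(B)$ for every $B$, the map $\pi_\index$ carries $\EE_{\index+1}^x$ into $\EE_\index^x$, so K\"onig's lemma applied to the finitely-branching tree formed by the $\EE_\index^x$ under the maps $\pi_\index$ yields balls $B^{(\index)}\in\EE_\index^x$ with $\pi_\index(B^{(\index+1)})=B^{(\index)}$ for all $\index$. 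This is a legal run for Bob: $\rad(B^{(0)})=\rho_0=\rho$ gives \eqref{bobrules1}, while $B^{(\index+1)}\subset\pi_\index(B^{(\index+1)})=B^{(\index)}$ and $\rad(B^{(\index+1)})=\beta\,\rad(B^{(\index)})$ give \eqref{bobrules}. The step I expect to require the most care is the bookkeeping observation that, by iterating $\pi_j(B^{(j+1)})=B^{(j)}$, one gets $\pi_\turn(B^{(\index)})=B^{(\turn)}$ for all $\turn\le\index$; hence after Bob's $\index$th move the history he has presented is exactly $(\pi_0(B^{(\index)}),\dots,\pi_\index(B^{(\index)}))$, so Alice's deterministic winning strategy dictates on her $\index$th turn precisely the collection $\AA(B^{(\index)})$ appearing in the construction. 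Since $\rad(B^{(\index)})=\beta^\index\rho\to0$ the play has a well-defined outcome, and because $x\in B^{(\index)}$ for every $\index$ that outcome is $x$.

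Finally I would rule out default. On this play Alice wins by default only if the radii of Bob's balls fail to tend to $0$ (they do not) or the outcome $x$ lies in some deleted set $A\in\AA(B^{(\index)})$. In the latter case, since also $x\in B^{(\index)}$, we would have $x\in B^{(\index)}\cap A$, which is by construction one of the sets comprising $\AA_\index$, and hence $x\in\bigcup_\index\bigcup[\AA_\index]$ --- contradicting the hypothesis. Therefore Alice follows her winning strategy without winning by default, so $x\in S$, which proves the claim. Apart from the identification of Alice's on-line responses with the collections $\AA(B^{(\index)})$ highlighted above, the remaining ingredients --- the K\"onig argument producing the chain, the legality checks, and the identity $\bigcap_\index\bigcup[\EE_\index]=J$ --- are routine.
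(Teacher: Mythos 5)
Your proof is correct and follows essentially the same route as the paper's: König's lemma produces a $\pi_\index$-nested chain of balls from the $\EE_\index$ through $x$, which is read as a legal play for Bob against Alice's winning strategy, so that $x\in S\cup\bigcup_\index\bigcup[\AA(B_\index)]$, and the second alternative is excluded because $x\in B_\index$ would then place $x$ in $\bigcup[\AA_\index]$. Your explicit restriction to $x\in\bigcap_\index\bigcup[\EE_\index]=J$ (needed for the König argument to start) is a point the paper's version leaves implicit, and is handled correctly.
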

\begin{proof}
By K\doubleacute onig's lemma, there exists a sequence $\EE_\index \ni B_\index \to x$ such that for all $\index$, we have $\pi(B_{\index + 1}) = B_\index$. Consider the strategy for Bob consisting of the plays $B_0,B_1,\ldots$. Alice's responses are the sets $\AA(B_\index)$ ($n\in\N$). Since Alice's strategy is winning, we have
\[
x\in S \cup \bigcup_{\index\in\N} \bigcup[\AA(B_\index)].
\]
On the other hand, $x\in \bigcap_{\index\in\N} B_\index$. So either $x\in S$, or there exists $n\in\N$ such that $x\in B_\index\cap \bigcup[\AA(B_\index)]$. In the former case we are done, and in the latter case we have $x\in \bigcup_{\index\in\N} \bigcup[\AA_\index]$, a contradiction.
\end{proof}

}


Fix $\epsilon > 0$ small to be determined, independent of $\alpha,\beta,c,\rho$, and let
\begin{equation}
\label{Ndef}
N = \lfloor \epsilon\alpha^{-\eta}\rfloor.
\end{equation}
For each $j\geq 0$ let $D_j \subset \sep_{jN}$ be a maximal $3\rho_{jN}$-separated set, and let $\DD_j = \{B(x,\rho_{jN}) : x\in D_j\} \subset \EE_{jN}$. Note that $\DD_j$ is a disjoint collection. For each $B\in \DD_j$ let
\[
\phi_j(B) = \sum_{\index < jN} \sum_{A\in \AA_\index^*(B)} \thi^c(A)
\]
(cf. Notation \ref{notationthi}). Fix $\gamma > 0$ small to be determined, independent of $\alpha,\beta,c,\rho$, and let
\[
\DD_j' = \{B \in \DD_j : \phi_j(B) \leq (\gamma \rho_{jN})^c\}.
\]
For every ball $B$ let
\[
\DD_{j+1}(B) = \big\{B' \in \DD_{j + 1} : B' \subset \tfrac12 B\big\},
\]
where $\lambda B$ denotes the ball resulting from multiplying the radius of $B$ by $\lambda$ while leaving the center fixed.
\begin{claim}
\label{claimdescendants}
For all $B\in \DD_j'$, we have
\begin{equation}
\label{kept-balls-count}
\#\big(\DD_{j+1}(B)\cap \DD_{j+1}'\big) \gtrsim \beta^{-N\delta} \;\;\;\text{ if }\;\;\; \alpha^c \leq \frac{1}{K_2}(1-\beta^{\eta-c}),
\end{equation}
where $K_2$ is a large constant.
\end{claim}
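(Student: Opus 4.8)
The plan is to deduce \eqref{kept-balls-count} by showing that at most half of the balls $B'\in\DD_{j+1}(B)$ are \emph{bad}, where $B'$ is called bad if $\phi_{j+1}(B')>(\gamma\rho_{(j+1)N})^c$; since $\#\DD_{j+1}(B)\gtrsim\beta^{-N\delta}$ by Ahlfors regularity (the centers of the balls of $\DD_{j+1}(B)$ form a $\asymp\rho_{(j+1)N}$-separated, $\asymp\rho_{(j+1)N}$-dense subset of $B\cap J$) this gives the claim. Write $R=\rho_{jN}$ and $r'=\rho_{(j+1)N}=\beta^N R$. First I would make the promised further restriction on the maps $\pi_\index$: whenever $B'\subset\tfrac12 B$ for some $B\in\DD_j$ --- such a $B$ being unique since $\DD_j$ is disjoint --- require $\pi_{jN}(B')=B$; this is possible because the sets $\sep_\index$ are dense enough (a routine telescoping argument), and it guarantees $\pi_\turn(B')=\pi_\turn(B)$, hence $\AA_\turn^*(B')\subset\AA_\turn^*(B)$, for all $\turn<jN$. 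Consequently
\[
\phi_{j+1}(B')\;=\;\underbrace{\sum_{\turn<jN}\sum_{A\in\AA_\turn^*(B')}\thi^c(A)}_{\le\,\phi_j(B)\,\le\,(\gamma R)^c}\;+\;\sum_{jN\le\turn<(j+1)N}\sum_{A\in\AA_\turn^*(B')}\thi^c(A),
\]
so it remains to count the balls on which the right-hand side exceeds $(\gamma r')^c$.

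The combinatorial input is the following counting estimate, coming from absolute $(\eta,\HH)$-decay together with Ahlfors regularity: for every $\hyp\in\HH$, every $\turn<(j+1)N$, every $r>0$, and every ancestor ball $\widehat B\in\EE_\turn$ of elements of $\DD_{j+1}(B)$, the balls $B'\in\DD_{j+1}(B)$ with $\pi_\turn(B')=\widehat B$ and $B'\cap\NN(\hyp,r)\ne\emptyset$ are pairwise disjoint, of radius $r'$ and $\mu$-measure $\gtrsim(r')^\delta$, and all lie in $\widehat B\cap\NN(\hyp,r+2r')$, so their number is $\lesssim\min(1,((r+r')/\rho_\turn)^\eta)(\rho_\turn/r')^\delta$; when $\turn<jN$ one may use the smaller ball $B$ (radius $R$) in place of $\widehat B=\pi_\turn(B)$, improving this to $\lesssim\min(1,((r+r')/R)^\eta)\beta^{-N\delta}$. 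Now call a deleted set $A=\NN(\hyp,r)$ \emph{heavy} if $r^c>\tfrac12(\gamma r')^c$ and \emph{light} otherwise: a single heavy $A$ meeting $B'$ already makes $B'$ bad, while a bad $B'$ meeting no heavy deletion must accumulate light potential $>\tfrac12(\gamma r')^c$. For the heavy deletions I decompose by dyadic thickness $r\asymp s$ and use conservation of potential. On turns $\turn<jN$ the heavy deletions in $\AA_\turn^*(B)$ of thickness $\asymp s$ have total $c$-potential $P^{(\turn)}_s$ with $\sum_\turn\sum_s P^{(\turn)}_s\le\phi_j(B)\le(\gamma R)^c$ and each has $\thi\le\gamma R$; each meets $\lesssim(s/R)^\eta\beta^{-N\delta}$ descendants and there are $\le P^{(\turn)}_s/s^c$ of them, so using $s^{\eta-c}\le(\gamma R)^{\eta-c}$ for $s\ge r'$ and absorbing the range $\gamma r'\le s<r'$ into a factor $\beta^{N(\eta-c)}$, summing over $s$ and $\turn$ disqualifies $\lesssim(\gamma^\eta+\beta^{N(\eta-c)})\beta^{-N\delta}$ balls. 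On a turn $\turn=jN+l$ one runs the same dyadic argument inside each of the $\asymp\beta^{-l\delta}$ ancestor balls $\widehat B\in\EE_\turn$ (turn-budget $(\alpha\rho_\turn)^c$) and sums over $\widehat B$ and over $l=0,\dots,N-1$; the powers of $\beta$ telescope and one gets $\lesssim(N\alpha^\eta+\tfrac{\alpha^c}{1-\beta^{\eta-c}})\beta^{-N\delta}$ disqualified balls. Finally, for the light deletions I bound $\sum_{B'\in\DD_{j+1}(B)}(\text{total light potential meeting }B')$ by expanding over turns, ancestor balls and light deletions, applying the counting estimate (the decay factor is now small because $\thi(A)<\gamma r'$), and invoke Markov's inequality; this disqualifies $\lesssim(\beta^{N(\eta-c)}+\tfrac{\alpha^c}{\gamma^c(1-\beta^{\eta-c})})\beta^{-N\delta}$ balls.

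Adding the three contributions, the number of bad balls is $\lesssim(\gamma^\eta+N\alpha^\eta+\beta^{N(\eta-c)}+\tfrac{\alpha^c}{\gamma^c(1-\beta^{\eta-c})})\beta^{-N\delta}$. One then fixes the absolute constants in the order: $\gamma$ small (to control $\gamma^\eta$), then $\epsilon$ small (so $N\alpha^\eta\le\epsilon$, since $N=\lfloor\epsilon\alpha^{-\eta}\rfloor\le\epsilon\alpha^{-\eta}$), then $K_2$ large. The hypothesis $\alpha^c\le\tfrac1{K_2}(1-\beta^{\eta-c})$ makes $\tfrac{\alpha^c}{\gamma^c(1-\beta^{\eta-c})}\le\tfrac1{K_2\gamma^c}$ small, and, using $1-\beta^{\eta-c}\le(\eta-c)|\log(\beta)|$ and $N\ge\tfrac12\epsilon\alpha^{-\eta}$, it forces $N(\eta-c)|\log(\beta)|\ge\tfrac12\epsilon K_2\alpha^{-(\eta-c)}\ge\tfrac12\epsilon K_2$, whence $\beta^{N(\eta-c)}=e^{-N(\eta-c)|\log(\beta)|}\le e^{-\epsilon K_2/2}$ is small too. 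For such a choice the bracket is at most $\tfrac12$ of the ratio of implied constants, so at most half of $\DD_{j+1}(B)$ is bad, giving \eqref{kept-balls-count}.

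The step I expect to be the main obstacle is the estimate for the turns $\turn$ at the \emph{start} of the block (those near $jN$): the sets deleted there can be as thick as $\alpha R$, enormously larger than the target balls of radius $r'=\beta^N R$, and a single such deletion can disqualify a large proportion of the descendants --- so a naive Markov bound comparing $\phi_{j+1}(B')$ directly to $(\gamma r')^c$ is useless, losing an uncontrolled factor $\beta^{-Nc}$. The dyadic size-class decomposition is what circumvents this: a deletion of thickness $s$ costs only $s^c$ of the potential budget but, by absolute decay, meets only an $\asymp(s/\rho_\turn)^\eta$ fraction of the descendants, and since $\eta>c$ the product $s^c(s/\rho_\turn)^\eta=s^{c+\eta}\rho_\turn^{-\eta}$ is --- after $s\le\alpha\rho_\turn$ --- at most $\alpha^\eta$ times a $\rho_\turn^{-c}$-normalized potential; summing against the conserved potential produces the decisive factor $\alpha^\eta$, and the factor $N$ picked up from the $N$ turns of a block is cancelled exactly by the choice $N\asymp\alpha^{-\eta}$.
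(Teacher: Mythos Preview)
Your proof is correct and its architecture matches the paper's: you show that at most half of the children are bad by splitting the potential into contributions from turns $<jN$ (``old'') and turns in $[jN,(j+1)N)$ (``new''), use absolute $(\eta,\HH)$-decay to count how many children a single deleted neighbourhood can meet, and arrive at exactly the same four controlling quantities $\gamma^\eta$, $N\alpha^\eta$, $\beta^{N(\eta-c)}$, and $\alpha^c/(\gamma^c(1-\beta^{\eta-c}))$ that the paper isolates as \eqref{sts1}--\eqref{sts4}; the choice of constants at the end is also the same. The one genuine methodological difference is in how you pass from the decay estimate to those four terms. You do a heavy/light split (threshold $\thi(A)\asymp\gamma r'$), handle the heavy deletions by a dyadic decomposition in $\thi(A)$, and handle the light ones by Markov's inequality on $\sum_{B'}\phi_{j+1}(B')$. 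The paper instead bounds $\#(\text{bad})$ by $\sum_{B'}\min\bigl(1,\phi_{j+1}(B')/(\gamma r')^c\bigr)$, expands, and then applies the single pointwise inequality
\[
\min\!\left(1,\frac{x^c}{(\gamma y)^c}\right)(x+2y)^\eta\;\le\;3^\eta x^c\max\!\left(x^{\eta-c},\frac{y^{\eta-c}}{\gamma^c}\right)
\]
with $x=\thi(A)/\rad(B')$, $y=r'/\rad(B')$; the two options in the $\max$ are exactly your heavy/light cases, so this compresses your dyadic-plus-Markov argument into one line. Your route is more explicit and makes the role of the condition $\eta>c$ very transparent (it is what makes the dyadic sums converge), while the paper's inequality is slicker but hides the same case analysis. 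A minor organizational point: you impose the restriction $\pi_{jN}(B')=B$ for $B'\subset\tfrac12 B$ at the outset, whereas the paper states it only after the claim --- but the paper's own passage from $\AA_n^*(B')$ to $\AA_n^*(B)$ for $n<jN$ already relies on it, so your ordering is arguably cleaner.
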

\begin{proof}
The Ahlfors regularity of $J$ implies that the cardinality of $\DD_{j+1}(B)$ is at least $\frac{1}{K_3} \beta^{-N\delta}$, where $K_3$ is a large constant. Thus we just need to show that
\begin{equation}
\label{ETSdescendants}
\#(\DD_{j+1}(B)\butnot \DD_{j+1}') \leq  \frac{1}{2K_3} \beta^{-N\delta}.
\end{equation}
Now
\begin{align*}
\#(\DD_{j+1}(B)\butnot \DD_{j+1}')
&\leq \sum_{B'\in \DD_{j+1}(B)} \min\left(1,\frac{\phi_{j+1}(B')}{(\gamma \rho_{(j+1)N})^c}\right)\\
&\leq \sum_{B'\in \DD_{j+1}(B)} \sum_{\index < (j+1)N} \sum_{A\in \AA_\index^*(B')} \min\left(1,\frac{\thi^c(A)}{(\gamma \rho_{(j+1)N})^c}\right)\\
&\leq \sum_{\index < jN} \sum_{A\in \AA_\index^*(B)} \min\left(1,\frac{\thi^c(A)}{(\gamma \rho_{(j+1)N})^c}\right)\#\{B'\in \DD_{j+1}(B) : B'\cap A\neq \emptyset\}\\
&+ \sum_{jN \leq \index < (j+1)N} \sum_{\substack{B'\in \EE_\index \\ B' \subset B}} \sum_{A\in \AA(B')} \min\left(1,\frac{\thi^c(A)}{(\gamma \rho_{(j+1)N})^c}\right) \#\{B''\in \DD_{j+1}(B') : B''\cap A \neq \emptyset\}.
\end{align*}
The idea is to bound the first term (representing ``old'' obstacles) using the assumption that $B\in \DD_j'$, which implies that $\phi_j(B) \leq (\gamma\rho_{jN})^c$, and to bound the second term (representing ``new'' obstacles) using the fact that Alice is playing legally, which implies \eqref{alicerules}. To do this, we observe that for all $B' \in \bigcup_\index \EE_\index$ and $A = \NN(\LL,\thi(A))$, since $\mu$ is Ahlfors $\delta$-regular and absolutely $(\eta,\HH)$-decaying we have
\begin{align*}
\#\{B'' \in \DD_{j+1}(B') : B'' \cap A \neq \emptyset\}
&\lesssim \frac{1}{\rho_{(j+1)N}^\delta} \mu\big(B'\cap \thickvar A{2\rho_{(j+1)N}}\big)\\
&\lesssim \frac{1}{\rho_{(j+1)N}^\delta} \left(\frac{\thi(\thickvar A{2\rho_{(j+1)N}})}{\rad(B')}\right)^\eta \,\rad^\delta(B')\\
&= \left(\frac{\rad(B')}{\rho_{(j+1)N}}\right)^\delta \left(\frac{\thi(A) + 2\rho_{(j+1)N}}{\rad(B')}\right)^\eta
\end{align*}
and thus
\begin{align*}
\#(\DD_{j+1}(B)\butnot \DD_{j+1}')
&\lesssim \left(\frac{\rad(B)}{\rho_{(j+1)N}}\right)^\delta \sum_{\index < jN} \sum_{A\in \AA_\index^*(B)} \min\left(1,\frac{\thi^c(A)}{(\gamma \rho_{(j+1)N})^c}\right) \left(\frac{\thi(A) + 2\rho_{(j+1)N}}{\rad(B)}\right)^\eta\\
&+ \sum_{jN \leq \index < (j+1)N} \sum_{\substack{B'\in \EE_\index \\ B' \subset B}} \left(\frac{\rad(B')}{\rho_{(j+1)N}}\right)^\delta \sum_{A\in \AA(B')} \min\left(1,\frac{\thi^c(A)}{(\gamma \rho_{(j+1)N})^c}\right)  \left(\frac{\thi(A) + 2\rho_{(j+1)N}}{\rad(B')}\right)^\eta.
\end{align*}
To bound this expression, we first prove the following.
\begin{subclaim}
\label{subclaim3etabound}
We have
\begin{equation}
\label{3etabound2}
\sum_{\index < jN} \sum_{A\in \AA_\index^*(B)} \min\left(1,\frac{\thi^c(A)}{(\gamma \rho_{(j+1)N})^c}\right) \left(\frac{\thi(A) + 2\rho_{(j+1)N}}{\rad(B)}\right)^\eta \leq 3^\eta \gamma^c \max\left(\gamma^{\eta-c},\frac{1}{\gamma^c}\left(\frac{\rho_{(j+1)N}}{\rad(B)}\right)^{\eta-c}\right),
\end{equation}
and for all $B'\in \bigcup_\index \EE_\index$, we have
\begin{equation}
\label{3etabound1}
\sum_{A\in \AA(B')} \min\left(1,\frac{\thi^c(A)}{(\gamma \rho_{(j+1)N})^c}\right) \left(\frac{\thi(A) + 2\rho_{(j+1)N}}{\rad(B')}\right)^\eta \leq 3^\eta \alpha^c \max\left(\alpha^{\eta-c},\frac{1}{\gamma^c}\left(\frac{\rho_{(j+1)N}}{\rad(B')}\right)^{\eta-c}\right).
\end{equation}
\end{subclaim}
\begin{remark*}
The proof of this subclaim will show that the left-hand sides of the maxima correspond to the contributions from ``big'' obstacles while the right-hand sides of the maxima correspond to contributions from ``small'' obstacles.
\end{remark*}
\begin{proof}
Let us prove \eqref{3etabound1} first. Since Alice is playing legally, we have
\begin{equation}
\label{potentialbound}
\sum_{A\in \AA(B')} \left(\frac{\thi(A)}{\rad(B')}\right)^c \leq \alpha^c
\end{equation}
so the trick is relating the left-hand side of \eqref{3etabound1} to the left-hand side of \eqref{potentialbound}.

Now, it can be verified that the inequality
\begin{equation}
\label{alphagammaineq}
\min\left(1,\frac{x^c}{(\gamma y)^c}\right) (x + 2y)^\eta \leq 3^\eta x^c \max\left(x^{\eta - c},\frac{y^{\eta - c}}{\gamma^c}\right)
\end{equation}
holds for all $x,y > 0$, e.g. by splitting into the cases $x\geq y$ (use the left option of both ``min'' and ``max'') and $x\leq y$ (use the right option of both ``min'' and ``max''). Letting $x = \thi(A)/\rad(B')$ and $y = \rho_{(j+1)N}/\rad(B')$ and summing over all $A\in \AA(B')$ shows that
\begin{align*}
&\sum_{A\in \AA(B')} \min\left(1,\frac{\thi^c(A)}{(\gamma \rho_{(j+1)N})^c}\right) \left(\frac{\thi(A) + 2\rho_{(j+1)N}}{\rad(B')}\right)^\eta\\
&\leq \sum_{A\in \AA(B')} 3^\eta \left(\frac{\thi(A)}{\rad(B')}\right)^c\max\left(\left(\frac{\thi(A)}{\rad(B')}\right)^{\eta-c},\frac{1}{\gamma^c}\left(\frac{\rho_{(j+1)N}}{\rad(B')}\right)^{\eta-c}\right)\\
&\leq 3^\eta \left(\sum_{A\in \AA(B')} \left(\frac{\thi(A)}{\rad(B')}\right)^c\right) \max\left(\left(\max_{A\in \AA(B')}\frac{\thi(A)}{\rad(B')}\right)^{\eta-c},\frac{1}{\gamma^c}\left(\frac{\rho_{(j+1)N}}{\rad(B')}\right)^{\eta-c}\right).
\end{align*}
Applying \eqref{potentialbound} twice yields \eqref{3etabound1}.

The proof of \eqref{3etabound2} is similar, except that instead of summing over $A\in \AA(B')$, we sum over  $A \in \bigcup_{\index < jN} \AA_\index^*(B)$, and instead of \eqref{potentialbound}, we use the fact that the assumption $B\in \DD_j'$ implies that
\[
\sum_{\index < jN} \sum_{A\in \AA_\index^*(B)} \left(\frac{\thi(A)}{\rad(B)}\right)^c \leq \gamma^c.
\]
This completes the proof of Subclaim \ref{subclaim3etabound}.
\end{proof}
\vspace{0.1 in}
\noindent Combining Subclaim \ref{subclaim3etabound} with the inequality preceding it yields
\begin{align*}
\#(\DD_{j+1}(B)\butnot \DD_{j+1}')
&\lesssim \left(\frac{\rad(B)}{\rho_{(j+1)N}}\right)^\delta \gamma^c \max\left(\gamma^{\eta-c},\frac{1}{\gamma^c}\left(\frac{\rho_{(j+1)N}}{\rad(B)}\right)^{\eta-c}\right)\\
&+ \sum_{jN \leq \index < (j+1)N} \sum_{\substack{B'\in \EE_\index \\ B' \subset B}} \left(\frac{\rad(B')}{\rho_{(j+1)N}}\right)^\delta \alpha^c \max\left(\alpha^{\eta-c},\frac{1}{\gamma^c}\left(\frac{\rho_{(j+1)N}}{\rad(B')}\right)^{\eta-c}\right).
\end{align*}
Now by definition we have $\rad(B) = \beta^{jN}\rho$, $\rho_{(j+1)N} = \beta^{(j+1)N}\rho$, and $\rad(B') = \beta^\index\rho$ for all $B'\in \EE_\index$. Thus after applying the change of variables $\index = (j+1)N - k$, we get
\begin{align*}
\frac{\rad(B)}{\rho_{(j+1)N}} &= \beta^{-N},&
\frac{\rad(B')}{\rho_{(j+1)N}} &= \beta^{-k}.
\end{align*}
On the other hand, the Ahlfors regularity of $J$ implies that
\[
\#\{B'\in \EE_\index : B' \subset B\} \asymp \left(\frac{\rad(B)}{\beta^\index \rho}\right)^\delta = \beta^{-(N-k)\delta},
\]
so we have
\begin{equation}
\label{almostenoughclaim}
\#(\DD_{j+1}(B)\butnot \DD_{j+1}')
\lesssim \beta^{-N\delta} \gamma^c \max\left(\gamma^{\eta-c},\frac{1}{\gamma^c}\beta^{N(\eta-c)}\right)
+ \beta^{-N\delta} \alpha^c \sum_{k = 1}^N \max\left(\alpha^{\eta-c},\frac{1}{\gamma^c}\beta^{k(\eta-c)}\right).
\end{equation}
Denote the implied constant of this inequality by $K_4$, and let
\[
\epsilon = \frac1{6 K_3 K_4}\cdot
\]
Then to deduce \eqref{ETSdescendants} from \eqref{almostenoughclaim}, it suffices to show that all four contributions to the right-hand side of \eqref{almostenoughclaim} are less than $\beta^{-N\delta}\epsilon$, i.e. that
\begin{align} \label{sts1}
\gamma^\eta &\leq \epsilon\phantom{.} \hspace{1.5 in} \text{(old big obstacles)}\hspace{-1.5 in}\\ \label{sts2}
\beta^{N(\eta - c)} &\leq \epsilon\phantom{.} \hspace{1.5 in} \text{(old small obstacles)}\hspace{-1.5 in}\\[2pt] \label{sts3}
N \alpha^\eta &\leq \epsilon\phantom{.} \hspace{1.5 in} \text{(new big obstacles)}\hspace{-1.5 in}\\[-6pt] \label{sts4}
\frac{\alpha^c}{\gamma^c} \sum_{k = 0}^\infty \beta^{k(\eta - c)} &\leq \epsilon. \hspace{1.5 in} \text{(new small obstacles)}\hspace{-1.5 in}
\end{align}
Now \eqref{sts1} can be achieved by choosing $\gamma = \epsilon^{1/\eta}$, while \eqref{sts3} is true by the definition of $N$ (see \eqref{Ndef}). This leaves \eqref{sts2} and \eqref{sts4}, which can be rearranged as
\begin{align*}
N(\eta - c)|\log(\beta)| &\geq |\log(\epsilon)|\\
\alpha^c \frac1{1 - \beta^{\eta - c}} &\leq \epsilon \gamma^c = \epsilon^{1 + c/\eta}.
\end{align*}
Now fix $K_2$ large to be determined, and suppose that $\alpha^c \leq \frac{1}{K_2} (1 - \beta^{\eta - c})$. Since $1+c/\eta < 2$, if $K_2 \geq \epsilon^{-2}$ then \eqref{sts4} holds. Moreover, since $\alpha^c \leq \epsilon^2 \leq \epsilon^{c/\eta}$, we have $\epsilon \alpha^{-\eta} \geq 1$ and thus
\[
N = \lfloor \epsilon \alpha^{-\eta}\rfloor \geq \tfrac12 \epsilon\alpha^{-\eta}.
\]
On the other hand, we have
\[
\alpha^\eta \leq \alpha^c \leq \frac{1}{K_2} \big|\log\beta^{\eta - c}\big|
\]
and thus if $K_2 \geq 2\epsilon^{-1}\log(\epsilon^{-1})$, then
\[
\tfrac12\epsilon\alpha^{-\eta} (\eta - c)|\log(\beta)| \geq |\log(\epsilon)|,
\]
demonstrating \eqref{sts2}. So we let
\[
K_2 = \max\big(\epsilon^{-2},2\epsilon^{-1}\log(\epsilon^{-1})\big).
\]
This completes the proof of Claim \ref{claimdescendants}.
\end{proof}

\vspace{0.1 in}
\noindent Let $K_3$ be as in the proof of the claim, so that
\begin{equation}
\label{Mdef}
\#\big(\DD_{j+1}(B)\cap \DD_{j+1}'\big) \geq M \df \left\lceil \frac1{2K_3} \beta^{-N\delta} \right\rceil \text{ for all }B'\in \DD_j'.
\end{equation}
Let $B_0$ be the ball given in the statement of the theorem, and assume that $B_0 \in \DD_0$. (It is always possible to select $\EE_0$ and $\DD_0$ such that this is the case.) Since $\phi_0(B_0) = 0 < (\gamma\rho)^c$, we have $B_0 \in \DD_0'$.

We can now construct a Cantor set $F$ as follows: let $\BB_0 = \{B_0\} \subset \DD_0'$, and whenever we are given a collection $\BB_j \subset \DD_j'$, construct a new collection $\BB_{j+1}$ by replacing each element $B\in \BB_j$ by $M$ elements of $\DD_{j+1}(B)\cap \DD_{j+1}'$. Such elements exist by \eqref{Mdef}. Finally, let
\[
F = \bigcap_{j = 0}^\infty \bigcup_{B\in \BB_j} B.
\]
Then standard arguments (see e.g. \cite{Beardon4}) show that $F$ is Ahlfors regular of dimension
\[
\frac{\log(M)}{|\log(\beta^N)|} \geq \frac{\log(\tfrac1{2K_3}\beta^{-N\delta})}{|\log(\beta^N)|}
= \delta - \frac{\log(2K_3)}{|\log(\beta^N)|}
= \delta - \frac{\log(2K_3)}{N|\log(\beta)|}
\geq \delta - 2\epsilon^{-1}\log(2K_3) \frac{\alpha^\eta}{|\log(\beta)|}\cdot
\]
So to demonstrate the first half of \eqref{potentialHD}, we just need to show that $F \subset S\cap J\cap B_0$. It is clear that $F \subset J\cap B_0$, so we show that $F \subset S$. Indeed, fix $x\in F$. For each $j\in\N$, let $B_{jN}$ be the unique element of $\BB_j$ containing $x$. At this point, we introduce the requirement that for each $j$, the map $\pi_{jN}$ must satisfy
\[
\pi_{jN}(B') = B \text{ whenever } \EE_{jN+1} \ni B' \subset B \in \DD_j.
\]
Due to the disjointness of the collection $\DD_j$, it is possible to choose a map $\pi_{jN}$ satisfying this requirement. Since $\beta \leq 1/4$, if $B\in \DD_j$ and $B'\in \EE_{jN+1}$ satisfy $B'\cap \frac12 B \neq \emptyset$, then $B' \subset B$. It follows that
\[
\pi_{jN}(B') = B \text{ whenever } \EE_n \ni B' \subset \tfrac12 B, \; B \in \DD_j,\; \index > jN.
\]
By the definition of $\BB_{j+1}$ we have $B_{(j+1)N} \subset \tfrac12 B_{jN}$ and thus $\pi_{jN}(B_{(j+1)N}) = B_{jN}$. Thus the partial sequence $(B_n)_{n\in jN\N}$ can be uniquely extended to a full sequence $(B_n)_{n\in\N}$ by requiring that $B_n = \pi_n(B_{n+1})$ for all $n$.

Now interpret the sequence $(B_n)_{n\in\N}$ as a sequence of moves for Bob in the potential game, and suppose Alice responds by playing her winning strategy. Then the outcome of the game is $x$, so either $x\in S$ or Alice wins by default. Suppose that Alice wins by default. Then we have $x\in A\in \AA(B_m)$ for some $m$. It follows that $A \in \AA_m^*(B_n)$ for all $n > m$, and thus
\[
\phi_j(B_{jN}) \geq \thi^c(A)
\]
for all $j$ such that $jN > m$. On the other hand, since $B_{jN} \in \DD_j'$ we have $\phi_j(B_{jN}) \leq (\gamma\rho_{jN})^c$, and thus $\thi(A) \leq \gamma \rho_{jN}$ for all $j$ such that $jN > m$. Letting $j\to\infty$ we get $\thi(A) = 0$, a contradiction. Thus $x\in S$, and hence $F \subset S$. This demonstrates the first half of \eqref{potentialHD}.

To demonstrate the second half of \eqref{potentialHD}, we observe that if $\alpha^c \leq \frac1{K_2}(1-\beta^{\eta-c})$, then
\[
\frac{\alpha^\eta}{|\log(\beta)|} \leq \frac{\alpha^c}{|\log(\beta)|} \leq \frac{1}{K_2}\cdot\frac{|\log(\beta^{\eta-c})|}{|\log(\beta)|} = \frac{\eta - c}{K_2} \leq \frac{\eta}{K_2},
\]
so requiring $K_2 > \eta K_1/\delta$ completes the proof.
\end{proof}

\section{Applications of the potential game}
\label{sectionpotentialapplications}

We now use the potential game, and in particular Theorem \ref{theorempotentialHD}, to prove Theorems \ref{theoremarithbounds} and \ref{theoremBAdcodim}. Note that Theorem \ref{theoremKepsilondelta} follows immediately from combining Theorem \ref{theorempotentialHD} with Lemmas \ref{lemmaMepsilon} and \ref{lemmaBA} (cf. Remark \ref{remarkc0vsabsolute} and Example \ref{exampleabspoints}).

\subsection{Proof of Theorem \ref{theoremBAdcodim}}

Note: In this section we fix a norm on $\R^d$ and treat $\R^d$ as a metric space with respect to that norm, as well as letting $\BA_d(\epsilon)$ be defined in terms of this norm; it does not matter which norm it is.

\begin{lemma}
\label{lemmaBAd}
Let $\HH$ be the collection of hyperplanes in $\R^d$. Then for all $\epsilon > 0$ and $(d!V_d)^{1/d}\epsilon < \beta < 1$, the set $\BA_d(\epsilon)$ is $(\alpha,\beta,c,\rho,\HH)$-potential winning, where
\[
(\alpha,\beta,c,\rho,\HH) = \left(\frac{\epsilon\beta^{-1}}{(d!V_d)^{-1/d}-\epsilon\beta^{-1}},\beta,0,\beta(d!V_d)^{-1/d} - \epsilon,\HH\right).
\]
Here $V_d$ denotes the volume of the $d$-dimensional unit ball (with respect to the chosen norm).
\end{lemma}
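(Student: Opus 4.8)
The plan is to mimic the structure of the proof of Lemma~\ref{lemmaBA}, replacing the one-dimensional ``each ball meets at most one bad interval'' dichotomy by a \emph{simplex lemma}: rationals of bounded denominator lying inside a small ball are forced to be coplanar, so that Alice can cover all of the corresponding balls $\Delta_\epsilon(\pp/q)$ in a single move by deleting one thickened hyperplane. Write $\kappa = (d!V_d)^{-1/d}$, and for $\pp/q\in\Q^d$ (in lowest terms) set $\Delta_\epsilon(\pp/q) = B(\pp/q,\epsilon q^{-(d+1)/d})$, so that $\BA_d(\epsilon) = \R^d\butnot\bigcup_{\pp/q}\Delta_\epsilon(\pp/q)$. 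The simplex lemma I will use is: if a ball of radius $r$ contains $d+1$ affinely independent rationals with denominators $\leq Q$, then $r\geq\kappa Q^{-(d+1)/d}$. This follows from two volume bounds for the simplex $\Sigma$ spanned by those rationals: on one hand $\mathrm{Vol}(\Sigma)\leq V_d r^d$ because $\Sigma$ lies in the (convex) ball; on the other hand, clearing denominators in the $(d+1)\times(d+1)$ determinant computing $d!\,\mathrm{Vol}(\Sigma)$ yields a nonzero integer, so $\mathrm{Vol}(\Sigma)\geq(d!\,q_0\cdots q_d)^{-1}\geq(d!\,Q^{d+1})^{-1}$. Contrapositively: whenever $(1+\alpha)r<\kappa Q^{-(d+1)/d}$, every rational of denominator $\leq Q$ in the ball of radius $(1+\alpha)r$ lies on a single hyperplane.

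Alice's strategy is then: on the $\turn$th turn, let $Q_\turn$ be the largest integer with $(1+\alpha)\rho_\turn<\kappa Q_\turn^{-(d+1)/d}$ (if none exists, delete an arbitrary tube of thickness $\alpha\rho_\turn$ and wait), let $\hyp_\turn\in\HH$ be a hyperplane containing every rational of denominator $\leq Q_\turn$ lying in the dilated ball $(1+\alpha)B_\turn = B(x_\turn,(1+\alpha)\rho_\turn)$ (such a hyperplane exists by the simplex lemma), and delete $\NN(\hyp_\turn,\alpha\rho_\turn)$ --- a legal move since $c=0$. To see it is winning, suppose Alice does not win by default, so $x_\infty=\bigcap_\turn B_\turn$ exists and $\rho_\turn\to 0$; if $x_\infty\notin\BA_d(\epsilon)$ then $x_\infty\in\Delta_\epsilon(\pp/q)$ for some $\pp/q$. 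Since $\rho_\turn\to0$ there is a first turn $\turn^*$ with $(1+\alpha)\rho_{\turn^*}<\kappa q^{-(d+1)/d}$, and then $q\leq Q_{\turn^*}$. The crux is the pair of inequalities $\epsilon q^{-(d+1)/d}\leq\alpha\rho_{\turn^*}$ and $\pp/q\in(1+\alpha)B_{\turn^*}$. The second follows from the first together with $\dist(\pp/q,x_{\turn^*})\leq\dist(\pp/q,x_\infty)+\dist(x_\infty,x_{\turn^*})\leq\epsilon q^{-(d+1)/d}+\rho_{\turn^*}$. For the first: if $\turn^*>0$, minimality gives $(1+\alpha)\rho_{\turn^*-1}\geq\kappa q^{-(d+1)/d}$, hence $\rho_{\turn^*}\geq\beta\rho_{\turn^*-1}\geq\beta\kappa q^{-(d+1)/d}/(1+\alpha)$, and the identity $\epsilon(1+\alpha)=\alpha\beta\kappa$ closes it; if $\turn^*=0$, then $\rho_0\geq\rho$ and the identity $\alpha\rho=\epsilon$ closes it. (A direct computation confirms both identities for the stated $\alpha$ and $\rho$.) Given these two inequalities, $\pp/q$ has denominator $\leq Q_{\turn^*}$ and lies in $(1+\alpha)B_{\turn^*}$, so $\pp/q\in\hyp_{\turn^*}$, whence $\Delta_\epsilon(\pp/q)\subset\NN(\hyp_{\turn^*},\epsilon q^{-(d+1)/d})\subset\NN(\hyp_{\turn^*},\alpha\rho_{\turn^*})$; thus $x_\infty$ lies in a deleted set and Alice does win by default, a contradiction. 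Hence $x_\infty\in\BA_d(\epsilon)$.

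I expect the main obstacle to be the simplex lemma together with the scale bookkeeping: one must ensure that \emph{every} denominator $q$ is caught at some turn $\turn^*$ and that at \emph{that} turn the tube of thickness $\alpha\rho_{\turn^*}$ is still wide enough to contain $\Delta_\epsilon(\pp/q)$, even though $\rho_{\turn^*}$ may already have shrunk far below $\rho$. This is exactly what the identities $\epsilon(1+\alpha)=\alpha\beta\kappa$ and $\alpha\rho=\epsilon$ are engineered to accomplish, and they pin down the stated values of $\alpha$ and $\rho$; the hypothesis $(d!V_d)^{1/d}\epsilon<\beta$ is precisely what makes $\rho=\beta\kappa-\epsilon$, and hence $\alpha=\epsilon/(\beta\kappa-\epsilon)$, positive. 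The remaining points --- legality of Alice's moves, existence of $\hyp_\turn$, and the elementary volume estimates --- are routine.
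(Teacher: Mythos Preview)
Your proof is correct and takes essentially the same approach as the paper: both use the simplex lemma to collapse all rationals of bounded height near Bob's ball onto a single hyperplane and delete its $\alpha\rho_m$-thickening. Your parameterization is slightly cleaner---you work directly with the enlarged ball of radius $(1+\alpha)\rho_m$ and isolate the identities $\epsilon(1+\alpha)=\alpha\beta\kappa$ and $\alpha\rho=\epsilon$, whereas the paper defines auxiliary quantities $s_m,Q_m$ via a coupled system---but since the paper's $s_m$ turns out to equal $(1+\alpha)\rho_m$ (from $1+\alpha=\kappa/(\kappa-\epsilon\beta^{-1})$), the two arguments are the same up to notation.
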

When $d = 1$, these numbers are only slightly worse than the ones appearing in Lemma \ref{lemmaBA}.
\begin{proof}
As in the proof of Lemma \ref{lemmaBA}, we let
\[
\Delta_\epsilon(\pp/q) = B(\pp/q,\epsilon q^{-\frac{d+1}{d}})
\]
so that
\[
\BA_1(\epsilon) = \R\butnot\bigcup_{\pp/q\in\Q^d} \Delta_\epsilon(\pp/q).
\]
We will use the simplex lemma in the following form:
\begin{lemma}[Simplex Lemma, {\cite[Lemma 4]{KTV}}]
Fix $Q > 1$ and $s > 0$ such that
\begin{equation}
\label{simplexvolume}
V_d s^d = \frac{1}{d! Q^{d+1}}\cdot
\end{equation}
Fix $\xx\in\R^d$. Then the set
\begin{equation}
\label{simplex}
\{\pp/q\in \Q^d \cap B(\xx,s) : q < Q\}
\end{equation}
is contained in an affine hyperplane.
\end{lemma}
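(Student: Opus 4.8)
The plan is the standard argument by contradiction via a volume computation. Suppose the set in \eqref{simplex} is \emph{not} contained in any affine hyperplane. Then its affine span is all of $\R^d$, so among its elements we can choose $d+1$ affinely independent points $\pp_0/q_0,\ldots,\pp_d/q_d$ with each $q_i\in\N$, $q_i<Q$. These are the vertices of a nondegenerate $d$-dimensional simplex $\Sigma\subset\R^d$, and the strategy is to bound the Lebesgue measure $\operatorname{Vol}(\Sigma)$ from below by an arithmetic/integrality argument and from above by a convexity argument, and then compare the two bounds using the hypothesis \eqref{simplexvolume}.

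For the lower bound, I would start from $\operatorname{Vol}(\Sigma)=\tfrac1{d!}\bigl|\det\bigl(\pp_1/q_1-\pp_0/q_0,\ldots,\pp_d/q_d-\pp_0/q_0\bigr)\bigr|$, and note that by a row operation this equals $\tfrac1{d!}$ times the absolute value of the $(d+1)\times(d+1)$ determinant of the matrix whose $i$th row is $(1,\pp_i/q_i)$. Multiplying the $i$th row of that matrix by $q_i$ shows the determinant is $\tfrac1{q_0\cdots q_d}$ times the determinant of the integer matrix with rows $(q_i,\pp_i)\in\Z^{d+1}$. Since the points $\pp_i/q_i$ are affinely independent, scaling rows shows the rows $(q_i,\pp_i)$ are linearly independent, so this integer determinant is a nonzero integer and hence has absolute value at least $1$. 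Because each $q_i<Q$, we conclude $\operatorname{Vol}(\Sigma)\geq \tfrac1{d!\,q_0\cdots q_d}>\tfrac1{d!\,Q^{d+1}}$.

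For the upper bound, all $d+1$ vertices lie in $B(\xx,s)$, which is convex, so $\Sigma\subset B(\xx,s)$ and therefore $\operatorname{Vol}(\Sigma)\leq \operatorname{Vol}(B(\xx,s))=V_d s^d$. Invoking the hypothesis \eqref{simplexvolume} gives $\operatorname{Vol}(\Sigma)\leq \tfrac1{d!\,Q^{d+1}}$, which contradicts the strict lower bound from the previous paragraph. Hence no $d+1$ points of the set in \eqref{simplex} can be affinely independent, i.e. that set is contained in an affine hyperplane.

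There is no serious obstacle here; the argument is a short volume count. The points needing a little care are: (i) confirming the equivalence ``the $\pp_i/q_i$ are affinely independent'' $\iff$ ``the integer matrix with rows $(q_i,\pp_i)$ is invertible'', which is what guarantees a nonzero \emph{integer} determinant; (ii) keeping ``volume'' fixed as Lebesgue measure throughout, so that the simplex-volume formula and the identity $\operatorname{Vol}(B(\xx,s))=V_d s^d$ (with $V_d$ the Lebesgue measure of the unit ball of the chosen norm) are used consistently; and (iii) flagging that the strict inequality $q_i<Q$ is exactly what makes the lower and upper bounds incompatible.
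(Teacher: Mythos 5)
Your proof is correct and complete: it is the standard volume-counting argument (lower-bound the simplex volume by $\tfrac1{d!\,q_0\cdots q_d}$ via an integer determinant, upper-bound it by $V_d s^d$ via convexity, and compare using \eqref{simplexvolume}), which is essentially the proof in the cited source. The paper itself does not reprove the lemma but simply quotes it from \cite{KTV}, so there is nothing to compare beyond noting that your argument, including the three points of care you flag, is exactly the intended one.
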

We now describe Alice's strategy in the potential game. Suppose that Bob has just made the move $B_\turn = B(\xx_\turn,\rho_\turn)$, and let $Q = Q_\turn > 1$ and $s = s_\turn > 0$ be chosen so as to satisfy \eqref{simplexvolume} as well as the equation
\[
s = \rho_\turn + \epsilon\beta^{-1} Q^{-\frac{d+1}{d}}.
\]
Note that solving for $\rho_\turn$ in terms of $Q$ gives
\begin{equation}
\label{rhon}
\rho_\turn = \left(\frac{1}{\sqrt[d]{d! V_d}} - \epsilon\beta^{-1}\right) Q^{-\frac{d+1}{d}}.
\end{equation}
Then Alice deletes the $\alpha\rho_\turn$-neighborhood of the affine hyperplane containing the set \eqref{simplex}.

To show that this strategy is winning (it is clearly legal), let $\xx$ denote the outcome of the game and suppose that $\xx\notin \BA_d(\epsilon)$, so that $\xx\in \Delta_\epsilon(\pp/q)$ for some $\pp/q\in\Q^d$. We will show that $\xx\in A\in \AA_\turn$ for some $\turn\geq 0$. Indeed, let $\turn$ be the first integer such that $q < Q_\turn$. If $\turn > 0$, then
\[
\beta \leq \frac{\rho_{\turn}}{\rho_{\turn-1}} = \left(\frac{Q_{\turn}}{Q_{\turn-1}}\right)^{-\frac{d+1}{d}}
\]
and thus
\[
q \geq Q_{\turn - 1} \geq \beta^{\frac{d}{d+1}}Q_\turn
\]
while if $\turn = 0$, then
\[
1 \leq \frac{\rho_0}{\rho} = \frac{Q_0^{-\frac{d+1}{d}}}{\beta}
\]
and thus
\[
q \geq 1 \geq \beta^{\frac{d}{d+1}}Q_\turn.
\]
Either way we have $q \geq \beta^{\frac{d}{d+1}}Q_\turn$, so
\begin{equation}
\label{radDeltaeps}
\rad(\Delta_\epsilon(\pp/q)) = \epsilon q^{-\frac{d+1}{d}} \leq \epsilon \beta^{-1} Q_\turn^{-\frac{d+1}{d}}.
\end{equation}
Thus since $\xx\in B(\xx_\turn,\rho_\turn)\cap \Delta_\epsilon(\pp/q)$, we have
\[
|\pp/q - \xx_\turn| \leq \rho_\turn + \epsilon \beta^{-1} Q_\turn^{-\frac{d+1}{d}} = s
\]
i.e. $\pp/q\in B(\xx_\turn,s)$. Thus $\pp/q$ is a member of the set \eqref{simplex} and thus of the hyperplane that Alice deleted the $\alpha\rho_\turn$-neighborhood of on turn $\turn$. So to complete the proof it suffices to show that
\[
\epsilon q^{-\frac{d+1}{d}} \leq \alpha\rho_\turn,
\]
which follows from \eqref{rhon}, \eqref{radDeltaeps}, and the definition of $\alpha$.
\end{proof}

\begin{corollary}
\label{corollaryKepsilondeltaeta}
Let $J \subset \R^d$ be the topological support of an Ahlfors $\delta$-regular and absolutely $\eta$-decaying measure. Then for all $\epsilon > 0$, we have
\begin{equation}
\label{Kepsilondelta}
\HD(\BA_d(\epsilon)\cap J) \geq \delta - K \epsilon^\eta
\end{equation}
where $K$ is a constant depending on $J$.
\end{corollary}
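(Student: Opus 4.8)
The plan is to derive Corollary~\ref{corollaryKepsilondeltaeta} by feeding Lemma~\ref{lemmaBAd} into Theorem~\ref{theorempotentialHD}. The only subtlety is that Lemma~\ref{lemmaBAd} produces a potential winning strategy with $c=0$, whereas Theorem~\ref{theorempotentialHD} yields a nontrivial bound only when $c>0$ (its side condition $\alpha^c\leq\frac1{K_2}(1-\beta^{\eta-c})$ is false at $c=0$, as $K_2$ is large), so an intermediate application of monotonicity is needed.

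First I would dispose of large $\epsilon$. Since Hausdorff dimension is always nonnegative, \eqref{Kepsilondelta} is automatic once $\delta-K\epsilon^\eta\leq0$; hence, fixing a threshold $\epsilon_*>0$ to be specified below, it suffices to prove \eqref{Kepsilondelta} for $0<\epsilon<\epsilon_*$ and then enlarge $K$ so that also $K\geq\delta\epsilon_*^{-\eta}$, which settles the range $\epsilon\geq\epsilon_*$. One may likewise assume $\eta>0$, since if $\eta=0$ the right-hand side of \eqref{Kepsilondelta} equals $\delta-K\leq0$ for $K\geq\delta$. So from now on $0<\epsilon<\epsilon_*$, where $\epsilon_*<\tfrac18(d!V_d)^{-1/d}$.

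Next I would invoke Lemma~\ref{lemmaBAd} with the \emph{fixed} choice $\beta=1/4$, legitimate since $(d!V_d)^{1/d}\epsilon<\tfrac14<1$: this gives that $\BA_d(\epsilon)$ is $(\alpha,1/4,0,\rho,\HH)$-potential winning, where $\HH$ is the collection of hyperplanes, $\rho=\tfrac14(d!V_d)^{-1/d}-\epsilon\in(0,\tfrac14(d!V_d)^{-1/d})$, and $\alpha=\frac{4\epsilon}{(d!V_d)^{-1/d}-4\epsilon}<8(d!V_d)^{1/d}\epsilon$, so $\alpha\asymp\epsilon$ with implied constant depending only on $d$. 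Proposition~\ref{propositionmonotonicity2}, applied to raise only the parameter $c$ from $0$ to $\eta/2$, upgrades this to: $\BA_d(\epsilon)$ is $(\alpha,1/4,\eta/2,\rho,\HH)$-potential winning. Since $\eta/2<\eta$ and $1/4\leq1/4$, Theorem~\ref{theorempotentialHD} now applies to the absolutely $\eta$-decaying (i.e. absolutely $(\eta,\HH)$-decaying) measure supported on $J$, \emph{provided} its side condition $\alpha^{\eta/2}\leq\frac1{K_2}(1-(1/4)^{\eta/2})$ holds; since $1-(1/4)^{\eta/2}$ is a positive constant depending only on $\eta$ while $\alpha\lesssim\epsilon$, this is guaranteed once $\epsilon_*$ is chosen small in terms of $\eta$ and $K_2$. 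Applying Theorem~\ref{theorempotentialHD} with any ball $B_0$ of radius $\geq\rho$ centered in $J$ --- which exists because $\rho$ is bounded by an absolute constant and $J\neq\emptyset$ --- gives
\[
\HD(\BA_d(\epsilon)\cap J)\ \geq\ \AD(\BA_d(\epsilon)\cap J\cap B_0)\ \geq\ \delta-K_1\frac{\alpha^\eta}{\log 4}\ \geq\ \delta-K'\epsilon^\eta,
\]
with $K'=K_1(8(d!V_d)^{1/d})^\eta/\log 4$; then $K=\max(K',\delta\epsilon_*^{-\eta})$ works for every $\epsilon>0$, and since $K_1,K_2,\epsilon_*,\delta,\eta$ depend only on $J$ (indeed on the measure $\mu$) and on $d$, the same is true of $K$.

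There is no real obstacle here, as all the content lives in the already-established Lemma~\ref{lemmaBAd} and Theorem~\ref{theorempotentialHD}. The one point that must not be fumbled is the $\epsilon$-dependence of $\beta$: one must keep $\beta=1/4$ \emph{fixed} --- so that $|\log\beta|$ is a constant and $\alpha$ is forced to scale linearly in $\epsilon$, which is precisely what turns $\delta-K_1\alpha^\eta/|\log\beta|$ into $\delta-K'\epsilon^\eta$ --- rather than letting $\beta$ shrink toward $(d!V_d)^{1/d}\epsilon$; and then one must check that the vanishing quantity $\alpha^{\eta/2}$ eventually falls below the fixed positive constant $\frac1{K_2}(1-(1/4)^{\eta/2})$, so that the hypothesis of Theorem~\ref{theorempotentialHD} is satisfied for all sufficiently small $\epsilon$.
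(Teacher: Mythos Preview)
Your proof is correct and follows essentially the same approach as the paper: fix $\beta=1/4$, use Lemma~\ref{lemmaBAd} together with the monotonicity Proposition~\ref{propositionmonotonicity2} to obtain $(\alpha,1/4,\eta/2,\rho,\HH)$-potential winning with $\alpha\asymp\epsilon$, and then apply Theorem~\ref{theorempotentialHD}. You have in fact been more careful than the paper in spelling out the treatment of large $\epsilon$, the existence of the ball $B_0$, and the verification of the side condition on $\alpha^c$.
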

\begin{proof}
Let $\beta = 1/4$ and $c = \eta/2$. Combining Lemma \ref{lemmaBAd} with Proposition \ref{propositionmonotonicity2} shows that $\BA_d(\epsilon)$ is $(\alpha,\beta,c,\rho,\HH)$-potential winning, where $\rho$ is a constant, $\alpha \asymp \epsilon$, and $\HH$ is the collection of hyperplanes in $\R^d$. If $\epsilon$ is sufficiently small, then $\alpha^c \leq \frac1{K_2}(1-\beta^{\eta-c})$ and thus Theorem \ref{theorempotentialHD} shows that \eqref{Kepsilondelta} holds.
\end{proof}

Theorem \ref{theoremBAdcodim} is a special case of of this corollary (cf. Example \ref{example1decay}).

\subsection{Proof of Theorem \ref{theoremarithbounds}}

In this section we let $\PP$ denote the set of points in $X = \R$.

\begin{lemma}
\label{AP-general}
For all $0 < \beta \leq 1/4$, there exists $\delta = \delta(\beta)$ such that for all $\alpha,c,\rho,\epsilon > 0$ and $S \subset \R$ such that $\w S = S \cup (-\infty, a) \cup (a + 2\rho + \epsilon, \infty) \subset \R$ is an $(\alpha,\beta, c, \rho,\PP)$-potential winning set with $c \leq 1 - 1/\log(\alpha^{-1})$, the set $S$ contains an arithmetic progression of length $\delta\alpha^{-1}/\log(\alpha^{-1})$. In fact, for every sufficiently small $t > 0$, $S$ contains uncountably many arithmetic progressions of length $\delta\alpha^{-1}/\log(\alpha^{-1})$ and common gap size $t$.
\end{lemma}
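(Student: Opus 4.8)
An arithmetic progression $\{x,x+t,\dots,x+(k-1)t\}\subset S$ is exactly a point $x$ of $T\df\bigcap_{i=0}^{k-1}(\w S-it)$ all of whose first $k$ translates lie in the interval $[a,a+2\rho+\epsilon]$ on which $\w S$ coincides with $S$; here we use that $(-\infty,a)$ and $(a+2\rho+\epsilon,\infty)$ are disjoint from this closed interval, so $\w S\cap[a,a+2\rho+\epsilon]=S\cap[a,a+2\rho+\epsilon]\subset S$. The plan is to show $T$ is potential winning with a good parameter and then feed it into Theorem \ref{theorempotentialHD} to get that $T$ has positive Ahlfors dimension, hence is uncountable, inside a small window; each point of $T$ in that window then yields an arithmetic progression in $S$. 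Since translation by $-it$ is an isometry of $\R$ fixing the collection $\PP$ of singletons, Proposition \ref{propositioninvariance2} gives that each $\w S-it$ is $(\alpha,\beta,c,\rho,\PP)$-potential winning, whence by Proposition \ref{propositionintersection2} (which requires $c>0$) the set $T$ is $(\alpha',\beta,c,\rho,\PP)$-potential winning with $(\alpha')^c=k\alpha^c$, i.e.\ $\alpha'=k^{1/c}\alpha$. I would apply Theorem \ref{theorempotentialHD} to $T$ with $X=J=\R$, $\HH=\PP$, and $\mu$ Lebesgue measure, which is Ahlfors $1$-regular and, by Example \ref{exampleabspoints}, absolutely $(1,\PP)$-decaying; thus $\delta=\eta=1$ there.

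The crux is choosing $k$ so that $\alpha'$ stays small while the hypotheses of Theorem \ref{theorempotentialHD} are met. We may assume $\alpha$ is as small as needed in terms of $\beta$: otherwise $1-1/\log(\alpha^{-1})\le 0$, making the hypothesis $c\le 1-1/\log(\alpha^{-1})$ vacuous, or the claimed length is $\le 2$ and the statement reduces to the uncountability of $S\cap[a,a+2\rho+\epsilon]$, which already follows from Theorem \ref{theorempotentialHD} applied to $\w S$. Write $L=\log(\alpha^{-1})$. First use Proposition \ref{propositionmonotonicity2} to replace $c$ by $1-1/L$ (legitimate since $c\le 1-1/L$ by hypothesis, and $1-1/L\in(0,1)$ for $\alpha$ small). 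Set $k=\lfloor\delta_0\alpha^{-1}/L\rfloor$, where $\delta_0=\delta_0(\beta)\le 1$ is to be fixed. Since $1/c=1+1/(L-1)$ and $\log k\le\log(\alpha^{-1})=L$, we get $k^{1/c}=k\,k^{1/(L-1)}\le e^2k$ for $L\ge 2$, hence $\alpha'\le e^2\delta_0/L$; and since $\alpha'\ge\alpha$, also $(\alpha')^c=\alpha'\,(\alpha')^{-1/L}\le e\,\alpha'\le e^3\delta_0/L$. On the other hand $1-\beta^{\eta-c}=1-\beta^{1/L}\ge|\log\beta|/(2L)$ once $L\ge 2|\log\beta|$. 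So the side condition $(\alpha')^c\le K_2^{-1}(1-\beta^{\eta-c})$ of Theorem \ref{theorempotentialHD} holds as soon as $\delta_0\le|\log\beta|/(2e^3K_2)$, and I would fix $\delta_0(\beta)$ to be this quantity, capped at $1$.

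To conclude, fix any $t$ with $0<t\le\epsilon/(k-1)$ (note $k\ge 2$ for $\alpha$ small). Then $I\df[a,\,a+2\rho+\epsilon-(k-1)t]$ has length $\ge 2\rho$, so it contains a closed ball $B_0$ of radius $\rho$ centered in $\R$, e.g.\ $B_0=[a,a+2\rho]$. Applying Theorem \ref{theorempotentialHD} to $T$ and $B_0$ yields $\AD(T\cap B_0)\ge 1-K_1\alpha'/|\log\beta|>0$, so $T\cap B_0$ is uncountable. For every $x\in T\cap B_0\subset I$ and $i\in\{0,\dots,k-1\}$ one has $x+it\in\w S$ (as $x\in\w S-it$) and $x+it\in[a+it,\,a+2\rho+\epsilon-(k-1-i)t]\subset[a,a+2\rho+\epsilon]$, hence $x+it\in S$; thus $\{x,x+t,\dots,x+(k-1)t\}$ is an arithmetic progression of length $k$ in $S$, and distinct $x$ give distinct such progressions with the same common gap $t$. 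Since $k\ge\tfrac12\delta_0\alpha^{-1}/L$ for $\alpha$ small, the lemma follows with $\delta(\beta)=\tfrac12\delta_0(\beta)$.

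I expect the parameter balancing of the second paragraph to be the main obstacle. To get the claimed rate one wants $k$ as large as $\delta_0\alpha^{-1}/\log(\alpha^{-1})$, and then $\alpha'=k^{1/c}\alpha$ stays $o(1)$ only if $k^{1/c}$ is within a bounded factor of $k$ — that is, only if $c$ lies within $O(1/\log(\alpha^{-1}))$ of $1$, which is exactly what the hypothesis $c\le 1-1/\log(\alpha^{-1})$ secures after $c$ is pushed up to that threshold by monotonicity. But $c$ must also stay strictly below $1$ with $1-\beta^{\eta-c}=1-\beta^{1-c}$ (of order $(1-c)|\log\beta|$ for $c$ near $1$) not too small, or else the side condition of Theorem \ref{theorempotentialHD} fails. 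Reconciling these two competing demands is what produces both the $\alpha^{-1}/\log(\alpha^{-1})$ growth rate and the dependence of $\delta$ on $\beta$; the remaining ingredients — isometry- and intersection-invariance, recognizing Lebesgue measure as the reference measure, and the elementary interval bookkeeping — are routine.
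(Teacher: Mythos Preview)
Your proposal is correct and follows essentially the same approach as the paper: push $c$ up to $1-1/\log(\alpha^{-1})$ by monotonicity, intersect $k$ translates of $\w S$ via Propositions \ref{propositioninvariance2} and \ref{propositionintersection2} to obtain an $(k^{1/c}\alpha,\beta,c,\rho,\PP)$-potential winning set, and then invoke Theorem \ref{theorempotentialHD} with $J=\R$, $\eta=\delta=1$ on the ball $[a,a+2\rho]$. The only cosmetic difference is that the paper takes $k$ to be the largest integer satisfying the side condition and then checks $k\asymp\alpha^{-1}/\log(\alpha^{-1})$, whereas you fix $k=\lfloor\delta_0\alpha^{-1}/\log(\alpha^{-1})\rfloor$ and verify the side condition directly; your explicit constants and your interval bookkeeping for the ``uncountably many'' clause are slightly more detailed than the paper's, but the substance is identical.
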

\begin{proof}
By Proposition \ref{propositionmonotonicity2}, we may without loss of generality assume that $c = 1 - 1/\log(\alpha^{-1})$. Fix $k\in\N$ to be determined, and fix $0 < t \leq \epsilon/k$. By Propositions \ref{propositionintersection2} and \ref{propositioninvariance2}, the set
\[
S' = \bigcap_{i = 0}^{k-1} (\w S - it)
\]
is $(k^{1/c}\alpha,\beta,c,\rho,\PP)$-potential winning. Thus by Theorem \ref{theorempotentialHD}, if
\begin{equation}
\label{etsAP-general}
k\alpha^c \leq \frac{1}{K_2}(1-\beta^{1-c})
\end{equation}
then $\AD(S'\cap [a,a+2\rho]) > 0$. In particular, in this case $S'\cap [a,a+2\rho] \neq \emptyset$, and if $x\in S'\cap [a,a+2\rho] \neq \emptyset$ then the arithmetic progression $\{x,x+t,\ldots,x+(k-1)t\}$ is contained in $S$.

Now let $k$ be the largest integer such that \eqref{etsAP-general} is satisfied. To complete the proof, we need to show that $k \asymp \alpha^{-1}/\log(\alpha^{-1})$ as long as $\alpha$ is sufficiently small. Indeed, since $\beta$ is fixed and $c = 1-1/\log(\alpha^{-1})$, we have
\begin{align*}
1-\beta^{1-c} &= 1-\beta^{1/\log(\alpha^{-1})} \asymp 1/\log(\alpha^{-1}),&
\alpha^c &= e\alpha \asymp \alpha
\end{align*}
and thus
\[
k = \left\lfloor \frac1{K_2} \cdot \frac{1-\beta^{1-c}}{\alpha^c}\right\rfloor \asymp \frac{1-\beta^{1-c}}{\alpha^c} \asymp \frac{1/\log(\alpha^{-1})}{\alpha} = \frac{\alpha^{-1}}{\log(\alpha^{-1})}
\]
as long as the right-hand side large enough to guarantee that $k\geq 1$.
\end{proof}

Combining with Lemmas \ref{lemmaMepsilon} and \ref{lemmaFn} (cf. Remark \ref{remarkc0vsabsolute}) immediately yields the lower bounds of \eqref{arithbounds1} and \eqref{arithbounds2}, respectively. So in the remainder of the proof we will demonstrate the upper bounds.

Let $S$ be an arithmetic progression in $M_\epsilon$ of length $k\geq 2$, and let $I$ be the smallest interval appearing in the construction of $M_\epsilon$ such that $S \subset I$. Let $J$ be the middle $\epsilon$ gap of $I$. The minimality of $I$ implies that $S$ contains points both to the left and to the right of $J$, so the common gap size $t$ of $S$ is at least $|J| = \epsilon |I|$. On the other hand, we have $(k-1)t = \diam(S) \leq |I|$, so $k-1 \leq |I|/|J| = 1/\epsilon$. This demonstrates the upper bound of \eqref{arithbounds1}.

The proof for $F_n$ is similar but more technical. In what follows we use the standard notation
\begin{equation}
\label{cfracdef}
[a_0;a_1,a_2,\ldots] \df a_0+\cfrac1{a_1+\cfrac1{a_2+\ddots}}
\end{equation}
Let $S$ be an arithmetic progression in $F_n$ of length $k\geq 2$, and let $\omega = \omega_1\cdots \omega_r$ be the longest word in the alphabet $\{1,\ldots,n\}$ such that the continued fraction expansions of all elements of $S$ begin with $\omega$. (Note that $\omega$ may be the empty word.) Then the set $A$ of numbers $i = 1,\ldots,n$ such that some element of $S$ has a continued fraction expansion of the form $[0;\omega,i,\ldots]$ has at least two elements. Here $[0;\omega,i,\ldots]$ is short for $[0;\omega_1,\ldots,\omega_r,i,\ldots]$. Let $i$ and $j$ be the smallest and second-smallest elements of $A$, respectively, and consider first the case where $j = i+1$. As before, write $t$ for the common gap size of $S$, so that $(k-1)t = \diam(S)$. Then
\[
t \geq |[0;\omega,j,n+1] - [0;\omega,i,1]| \;\;\;\;\; \text{ while } \;\;\;\;\;
(k-1)t \leq |[0;\omega,i] - [0;\omega,n+1]|,
\]
so
\begin{align*}
k-1 &\leq \frac{|[0;\omega,i] - [0;\omega,n+1]|}{|[0;\omega,j,n+1] - [0;\omega,i,1]|}\\
&\asymp \frac{|[0;i] - [0;n+1]|}{|[0;j,n+1] - [0;i,1]|} \note{bounded distortion property\footnotemark}\\
&\leq \frac{1/i}{|[0;j,n+1] - [0;j]|}\\
&\asymp \frac{1/i}{(1/j^2)|[0;n+1] - 0|} \note{bounded distortion property again}\\
&= \frac{j^2}{i}(n+1) \lesssim n^2. \since{$j = i + 1$}
\end{align*}
\Footnotetext{The bounded distortion property for the Gauss iterated function system $\big(u_k(x) \df \frac1{k+x}\big)_{k\in\N}$ can be proven by applying \cite[Lemma 2.2(a)]{MauldinUrbanski1}. It states that if $u_\omega(x) = u_{\omega_1}\circ\cdots\circ u_{\omega_r}(x)$, or equivalently $u_\omega([0;x]) = [0;\omega,x]$, then
\[
|u_\omega(y) - u_\omega(x)| \asymp \max_{[0,1]} |u_\omega'| \cdot |y-x| \text{ for all }x,y\in [0,1].
\]}
If $j > i+1$, then the bound $|[0;j,n+1] - [0;i,1]| \geq \frac{1}{i+1} - \frac{1}{j}$ can be used instead, yielding the better bound
\begin{align*}
k-1 &\lesssim \frac{\frac{1}{i}}{\frac{1}{i+1}-\frac{1}{j}}
\leq \frac{\frac{1}{i}}{\left(\frac{1}{(i+1)(i+2)}\right)}
= \frac{(i+1)(i+2)}{i} \asymp i \leq n.
\end{align*}
This demonstrates the upper bound of \eqref{arithbounds2}, completing the proof.

\bibliographystyle{amsplain}

\bibliography{bibliography}

\end{document}